\documentclass[10pt,english,reqno]{amsart} 


\textwidth=14cm \oddsidemargin=1cm
\evensidemargin=1cm
\calclayout


\usepackage[hidelinks]{hyperref}
\usepackage[OT2, T1]{fontenc}
\usepackage{graphicx}
\usepackage{amssymb}
\usepackage{epstopdf}
\usepackage{enumerate}
\usepackage{tikz-cd}
\usepackage{MnSymbol}
\usepackage{mathrsfs}

\DeclareMathSymbol{A}{\mathalpha}{operators}{`A}
\DeclareMathSymbol{B}{\mathalpha}{operators}{`B}
\DeclareMathSymbol{C}{\mathalpha}{operators}{`C}
\DeclareMathSymbol{D}{\mathalpha}{operators}{`D}
\DeclareMathSymbol{E}{\mathalpha}{operators}{`E}
\DeclareMathSymbol{F}{\mathalpha}{operators}{`F}
\DeclareMathSymbol{G}{\mathalpha}{operators}{`G}
\DeclareMathSymbol{H}{\mathalpha}{operators}{`H}
\DeclareMathSymbol{I}{\mathalpha}{operators}{`I}
\DeclareMathSymbol{J}{\mathalpha}{operators}{`J}
\DeclareMathSymbol{K}{\mathalpha}{operators}{`K}
\DeclareMathSymbol{L}{\mathalpha}{operators}{`L}
\DeclareMathSymbol{M}{\mathalpha}{operators}{`M}
\DeclareMathSymbol{N}{\mathalpha}{operators}{`N}
\DeclareMathSymbol{O}{\mathalpha}{operators}{`O}
\DeclareMathSymbol{P}{\mathalpha}{operators}{`P}
\DeclareMathSymbol{Q}{\mathalpha}{operators}{`Q}
\DeclareMathSymbol{R}{\mathalpha}{operators}{`R}
\DeclareMathSymbol{S}{\mathalpha}{operators}{`S}
\DeclareMathSymbol{T}{\mathalpha}{operators}{`T}
\DeclareMathSymbol{U}{\mathalpha}{operators}{`U}
\DeclareMathSymbol{V}{\mathalpha}{operators}{`V}
\DeclareMathSymbol{W}{\mathalpha}{operators}{`W}
\DeclareMathSymbol{X}{\mathalpha}{operators}{`X}
\DeclareMathSymbol{Y}{\mathalpha}{operators}{`Y}
\DeclareMathSymbol{Z}{\mathalpha}{operators}{`Z}


\newcommand{\fr}{\mathfrak}
\newcommand{\cal}{\mathscr}

\newcommand{\op}{\operatorname}
\newcommand{\tn}{\textnormal}


\newcommand{\Sm}{\mathrm{Sm}}

\newcommand{\Spc}{\mathrm{Spc}}
\newcommand{\Sptr}{\mathrm{Sptr}}
\newcommand{\PShv}{\mathrm{PShv}}
\newcommand{\Shv}{\mathrm{Shv}}

\newcommand{\Maps}{\mathrm{Maps}}
\newcommand{\Vect}{\mathrm{Vect}}
\newcommand{\Mod}{\mathrm{Mod}}
\newcommand{\Comod}{\mathrm{Comod}}
\newcommand{\Rep}{\mathrm{Rep}}

\newcommand{\QCoh}{\mathrm{QCoh}}

\newcommand{\Ind}{\op{Ind}}

\DeclareMathOperator*\colim{colim}


\newcommand{\Hom}{\op{Hom}}
\newcommand{\End}{\op{End}}
\newcommand{\Ker}{\op{Ker}}
\newcommand{\Aut}{\op{Aut}}
\newcommand{\Ext}{\op{Ext}}
\newcommand{\Tor}{\op{Tor}}
\newcommand{\Sym}{\op{Sym}}
\newcommand{\Ant}{\op{Ant}}

\newcommand{\Spec}{\op{Spec}}
\newcommand{\Gal}{\op{Gal}}
\newcommand{\Nm}{\op{Nm}}

\newcommand{\grp}{\mathrm{grp}}

\newcommand{\Quad}{\mathrm{Quad}}
\newcommand{\der}{\mathrm{der}}

\newcommand{\com}{\mathrm{com}}

\newcommand{\loo}[1]{(\!(#1)\!)}

\newcommand{\GL}{\mathrm{GL}}
\newcommand{\SL}{\mathrm{SL}}

\newcommand{\SO}{\mathrm{SO}}
\newcommand{\Sp}{\mathrm{Sp}}


\newcommand{\Bun}{\mathrm{Bun}}


\newcommand{\et}{\textnormal{\'et}}
\newcommand{\Zar}{\textnormal{Zar}}
\newcommand{\dR}{\textnormal{dR}}

\newcommand{\Lis}{\mathrm{Lis}}
\newcommand{\Fib}{\mathrm{Fib}}

\newcommand{\res}{\mathrm{res}}
\newcommand{\ord}{\mathrm{ord}}
\newcommand{\sgn}{\mathrm{sgn}}
\newcommand{\ad}{\mathrm{ad}}
\newcommand{\pt}{\mathrm{pt}}
\newcommand{\st}{\mathrm{st}}
\newcommand{\ev}{\mathrm{ev}}
\newcommand{\pr}{\mathrm{pr}}
\newcommand{\id}{\mathrm{id}}

\newcommand{\act}{\mathrm{act}}
\newcommand{\alg}{\mathrm{alg}}
\newcommand{\tr}{\mathrm{tr}}
\newcommand{\cl}{\mathrm{cl}}
\newcommand{\ab}{\mathrm{ab}}
\newcommand{\an}{\mathrm{an}}


\newtheorem{thm}[subsubsection]{Theorem}
\newtheorem*{thm*}{Theorem}
\newtheorem{thmx}{Theorem}

\newtheorem{prop}[subsubsection]{Proposition}
\newtheorem{lem}[subsubsection]{Lemma}

\newtheorem{cor}[subsubsection]{Corollary}

\theoremstyle{definition}
\newtheorem{defn}[subsubsection]{Definition}

\newtheorem{eg}[subsubsection]{Example}
\newtheorem{rem}[subsubsection]{Remark}

\numberwithin{equation}{section}


\newtheorem{untitledsubsubsection}[subsubsection]{}

\makeatletter
\renewcommand{\@secnumfont}{\bfseries}
\makeatother

\newenvironment{void}
{\begin{untitledsubsubsection}}
{\end{untitledsubsubsection}}



\title{\'Etale metaplectic covers of reductive group schemes}

\author{Yifei Zhao}
\email{yifei.zhao@uni-muenster.de}

\thanks{
The project was funded by the Deutsche Forschungsgemeinschaft (DFG, German Research Foundation) Project-ID 427320536 -- SFB 1442, as well as under Germany's Excellence Strategy EXC 2044 390685587, Mathematics Münster: Dynamics--Geometry--Structure.
}

\thanks{
Part of the project was carried out when the author received funding from the European Research Council (ERC) under the European Union's Horizon 2020 research and innovation programme (grant agreement No.~851146).
}


\begin{document}


\begin{abstract}
Given a reductive group scheme $G$, we give a linear algebraic description of reduced \'etale $4$-cocycles on its classifying stack $B(G)$. These cocycles form a $2$-groupoid, which we interpret as parameters of metaplectic covers of $G$. We use our linear algebraic description to define the Langlands dual of a metaplectic cover.
\end{abstract}

\maketitle

\setcounter{tocdepth}{2}
\tableofcontents


\section*{Introduction}

The goal of this article is to build a theory of covering groups of reductive group schemes $G$ over an arbitrary base scheme $S$, relying on \'etale cohomology. The idea for such a theory is due to Deligne \cite{MR1441006}, who also worked out the case where $G$ is semisimple and simply connected. The generalization to the reductive case, carried out in the present article, is motivated by the geometrization of the Langlands program \cite{beilinson1991quantization} \cite{MR3300415} \cite{fargues2021geometrization}, as well as the possibility of casting covering groups under the same framework, see \cite{MR2684259} \cite{MR2963537} \cite{MR3802418} \cite{MR3802419} \cite{MR3769731}.

There are already two existing foundations for the Langlands program for covering groups of reductive groups:
\begin{enumerate}
	\item Weissman \cite{MR3802418} has defined the $\tn L$-groups of covering groups using Brylinski and Deligne's parametrization of them by algebraic $\tn K$-theory \cite{MR1896177};
	\item Gaitsgory--Lysenko \cite{MR3769731} has defined the ``metaplectic dual data'' of factorization gerbes on the affine Grassmannian associated to $G$ and a smooth curve $X$, interpreted as parameters of the metaplectic geometric Langlands program.
\end{enumerate}

The advantage of Weissman's foundation is that it works over number fields and function fields alike, whereas the advantage of Gaitsgory and Lysenko's theory is that it is intrinsically geometric, allowing for example, a straightforward formulation of the geometric Satake equivalence.

The foundation we construct is their ``least common multiple'': it is essentially equivalent to that of \cite{MR3769731} but remains valid in the number field context. It includes all covering groups which arise from algebraic $\tn K$-theory, as well as an extra class coming from abelianized cohomology of $G$, which has applications in Langlands functoriality by Kaletha's work \cite{kaletha2022}. Furthermore, we prove a number of structural results which lift classical computations to the geometric level, including for example the behavior of certain covering groups when restricted to parabolic subgroups.

The main intended applications of the present article are to extend V.~Lafforgue's spectral decomposition \cite{MR3787407} to covering groups over function fields and Fargues and Scholze's spectral action \cite{fargues2021geometrization} to covering groups over a $p$-adic field (jointly with Gaisin, Imai, and Koshikawa). The present article partially serves as a collection of the group-theoretic inputs they require. I beg the reader's forgiveness for not including concrete applications herein.

\subsection{\'Etale metaplectic covers}

\begin{void}
Let us explain the core definition of this article. Let $S$ be a scheme, $G\rightarrow S$ be a group scheme of finite type, and $A$ be a locally constant \'etale sheaf of finite abelian groups whose order is invertible on $S$. We define an \emph{$A$-valued \'etale metaplectic cover} of $G$ to be a rigidified section of $B^4A(1)$ over the classifying stack $BG$.\footnote{We use the adjective ``metaplectic'' since ``\'etale cover'' has an established meaning, although the covering groups we study include far more instances than those having to do with the symplectic group.}

Here, $A(1)$ is the Tate twist of $A$ introduced for convention, $B^4A(1)$ is its fourth iterated classifying stack, and ``rigidified'' means being equipped with a trivialization over the netural point $e : S\rightarrow BG$.

We use the language of Higher Algebra, as developed by Lurie \cite{MR2522659} \cite{lurie2017higher}, in making this definition, although it can be avoided by working with chain complexes on simplicial schemes as in \cite{MR1441006}. However, for the definition of the $\tn L$-group, as well as the intended applications, we will need to consider algebraic structures on higher groupoids, which can be cumbersome to formulate without Higher Algebra.

Let us explain how this definition is related to other notions of covering groups.
\end{void}

\begin{void}\emph{Topological covers.}
To connect this definition to classical covering groups, we assume that $S$ is the spectrum of a nonarchimedean local field $F$ and $A$ is a constant sheaf (for instance, the abelian group $\mu(F)$ of roots of unity in $F$). Then any \'etale metaplectic cover of $G$ induces a central extension of topological groups:
\begin{equation}
\label{eq-topological-central-extension-intro}
1 \rightarrow A \rightarrow \tilde G \rightarrow G(F) \rightarrow 1.
\end{equation}

The idea is simply to consider the monoidal morphism $G\rightarrow B^3A(1)$ associated to an \'etale metaplectic cover, evaluate it on $\Spec(F)$, and use the vanishing of the \'etale cohomology group $H^3(F, A(1))$ and the Tate duality isomorphism $H^2(F, A(1)) \cong H^0(F, A)$. The archimedean case is addressed separately since $H^3$ may not vanish.

If $F$ is a global field with topological ring of ad\`eles $\mathbb A_F$, we obtain a central extension of $G(\mathbb A_F)$ (or possibly a subgroup thereof when $F$ contains real places) equipped with a canonical splitting over $G(F)$. When $A$ appears as a subgroup of the multiplicative group of a coefficient field, we arrive at the notion of ``genuine'' automorphic representations.

The most classical example of \eqref{eq-topological-central-extension-intro} is Kubota's double cover of $\SL_2(F)$, for $F$ of characteristic $\neq 2$ (see \cite{MR204422}), which is induced from the \'etale metaplectic cover represented by the mod $2$ universal second Chern class $[c_2]\in H^4(B(\SL_2), \{\pm 1\}^{\otimes 2})$.
\end{void}

\begin{void}\emph{Central extensions by $\underline K_2$.}
Let us relate \'etale metaplectic covers to central extensions of $G$ by the Zariski sheafified second algebraic $\tn K$-group $\underline K_2$, considered by Brylinski--Deligne \cite{MR1896177}. The relevant \'etale metaplectic covers have coefficient group $A = \mu_N$, for $N\ge 1$ being an integer invertible on $S$.

The essential point is that over a smooth scheme over a field, there is a canonical functor from central extensions of $G$ by $\underline K_2$ to \'etale metaplectic covers. It is a version of the \'etale realization of motivic cohomology of $BG$. The construction of this functor is essentially due to Gaitsgory \cite[\S6]{MR4117995} and involves strengthening some results of \cite[\S6]{MR1460391} proving the equivalence between $K$-cohomology of $BG$ and integral motivic cohomology in both the Zariski and \'etale topologies, see Theorem \ref{thm-motivic-versus-K2} in the main body of the text.

If $S$ is the spectrum of a nonarchimedean local field $F$ containing a primitive $N$th root of unity, then we have a commutative diagram:
$$
\begin{tikzcd}[column sep = -1em, row sep = 0.5em]
	\begin{Bmatrix}
	\text{central extensions} \\
	\text{of $G$ by $\underline K_2$}
	\end{Bmatrix}
	 \ar[dr]\ar[dd] \\
	 &
	 \begin{Bmatrix}
	 \text{\'etale metaplectic covers} \\
	 \text{of $G$ valued in $\mu_N$}
	 \end{Bmatrix}\ar[dl] \\
	 \begin{Bmatrix}
	 \text{central extensions} \\
	 \text{of $G(F)$ by $\mu_N(F)$}
	 \end{Bmatrix}
\end{tikzcd}
$$
where the vertical functor is that of \cite[\S10]{MR1896177}. There is an analogous commutative diagram for global fields. They show that \'etale metaplectic covers \emph{refine} the parametrization of covering groups by algebraic $\tn K$-theory.

For example, the fact that the restriction of the metaplectic double cover of $\Sp_{2n}$ to the Siegel parabolic is induced along the determinant of the Levi quotient $\GL_n$ is witnessed by the corresponding \'etale metaplectic cover but not the $\tn K$-theoretic one.
\end{void}

\begin{void}\emph{Geometrization.}
\label{void-geometrization-intro}
Perhaps most importantly, $A$-valued \'etale metaplectic covers define $A$-gerbes on the geometric objects relevant to the Langlands program. When $A$ is contained in the multiplicative group of a coefficient field, say $\overline{\mathbb Q}_{\ell}$ for a prime $\ell$ invertible on $S$, the induced $\overline{\mathbb Q}{}^{\times}_{\ell}$-gerbes allow us to form twisted categories of constructible sheaves.

Let us first discuss the global function field context: the base scheme is a smooth, proper, geometrically connected curve $X$ over a finite field with field of fractions $F$. Let $\Bun_G$ denote the moduli stack of $G$-bundles on $X$. Given an \'etale metaplectic cover of $G$, its pullback along the universal $G$-bundle $\Bun_G\times X \rightarrow BG$ defines a section of the complex $A(1)[4]$ over $\Bun_G\times X$. Pairing with the fundamental class of $X$ defines a map:
$$
\Gamma(\Bun_G\times X, A(1)[4]) \rightarrow \Gamma(\Bun_G, A[2]),
$$
and a section of the target is precisely an $A$-gerbe on $\Bun_G$. Constructible sheaves twisted by the induced $\overline{\mathbb Q}{}_{\ell}^{\times}$-gerbe define unramified genuine automorphic forms on the corresponding covering group of $G(\mathbb A_F)$ by the trace of Frobenius. The construction has a variant in the ramified situation as well.

Besides $\Bun_G$, \'etale metaplectic covers naturally define $\overline{\mathbb Q}{}_{\ell}^{\times}$-gerbes on the Hecke stack compatibly with the convolution structure. The variant for the local Hecke stack is furthermore compatible with its factorization structure. This observation allows a significant amount of the geometric Langlands program to be transported to the metaplectic context, as explained in \cite{MR3769731}.

When the base is a $p$-adic field $F$, \'etale metaplectic covers also define $A$-gerbes on the Fargues--Scholze $\Bun_G$: the $v$-sheaf assigning the groupoid of $G$-bundles on the Fargues--Fontaine curve $X_{(S, S^+)}$ to any affinoid perfectoid space $(S, S^+)$, see \cite{fargues2021geometrization}. The construction is a variant of the global function field case, where the role of $X$ is played by the ``mirror curve'' $\mathrm{Div}^1$. Over the open locus $[*/G(F)]$ of $\Bun_G$, the resulting $A$-gerbe is canonically rigidified and recovers the central extension $\tilde G$ of $G(F)$. The category of lisse sheaves twisted by its induced $\overline{\mathbb Q}{}_{\ell}^{\times}$-gerbe is a geometric incarnation of the category of genuine smooth representations of $\tilde G$.

In geometric applictions, \'etale metaplectic covers offer a technical advantage over their $\tn K$-theoretic counterparts, because the theory requires no regularity assumption on the base scheme, which is needed for \cite{MR1896177}.
\end{void}

\subsection{Classification}

\begin{void}
We hope to have conveyed some sense of the utility of having a theory of covering groups based on \'etale cohomology. We now turn to the main theorem of the article, which is a classification of \'etale metaplectic covers of any reductive group scheme $G\rightarrow S$: by definition, they form the space of rigidified sections $\Gamma_e(BG, B^4A(1))$. Our classification is in complete parallel with \cite[Theorem 7.2]{MR1896177}, the only difference being that $\Gamma_e(BG, B^4A(1))$ is a $2$-groupoid rather than a $1$-groupoid.

Let $\underline{\Gamma}{}_e(BG, B^4A(1))$ denote the \'etale sheaf on $S$, assigning the space of rigidified sections of $B^4A(1)$ over $BG\times_S S_1$ for any $S$-scheme $S_1$. By abstract nonsense, each homotopy sheaf $\pi_i\underline{\Gamma}(BG, B^4A(1))$ is isomorphic to $R^{4-i}p_*A(1)$, where $p : BG \rightarrow S$ denotes the projection map. The higher direct images $R^{4-i}p_*A(1)$ are computed by the \'etale cohomology of $BG$, which is standard. The part of the problem which we resolve is how these homotopy sheaves ``fit together'' in $\underline{\Gamma}{}_e(BG, B^4A(1))$.

Sections \ref{sec-torus}-\ref{sec-classification} describe $\underline{\Gamma}{}_e(BG, B^4A(1))$ in two stages.
\end{void}

\begin{void}\emph{Tori.}
\label{void-tori-intro}
Suppose that $T\rightarrow S$ is a torus with sheaf of cocharacters $\Lambda$. Then Theorem \ref{thm-classification-torus} expresses $\underline{\Gamma}{}_e(BT, B^4A(1))$ as both a pushout and a pullback (in the $\infty$-categorical sense) of constructions of linear algebraic nature:
\begin{equation}
\label{eq-second-theta-data-intro}
\begin{tikzcd}[column sep = 0.5em, row sep = 2em]
	\underline{\Maps}{}_{\mathbb Z}(\wedge^2\Lambda, A(-1)) \ar[r]\ar[d, "\Psi(-1)"] & \underline{\Maps}{}_{\mathbb Z}(\Lambda^{\otimes 2}, A(-1)) \ar[d] \\
	\underline{\Maps}{}_{\mathbb Z}(\Lambda, B^2A) \ar[r] & \underline{\Gamma}{}_e(BT, B^4A(1)) \ar[r]\ar[d] & \underline{\Maps}{}_{\mathbb Z}(\Gamma^2\Lambda, A(-1)) \ar[d, "\Psi(-1)"] \\
	& \underline{\Maps}{}_{\mathbb Z}(H^{(2)}(\Lambda), B^2A) \ar[r] & \underline{\Maps}{}_{\mathbb Z}(\wedge^2\Lambda, BA)
\end{tikzcd}
\end{equation}
Here, $\underline{\Maps}{}_{\mathbb Z}(-,-)$ denotes the sheaf of $\mathbb Z$-linear morphisms and the arrows labeled $\Psi(-1)$ are constructed using the ``Kummer torsor'' of roots of $(-1)$. The sheaf $H^{(2)}(\Lambda)$ is an extension:
$$
B(\wedge^2\Lambda) \rightarrow H^{(2)}(\Lambda) \rightarrow \Lambda,
$$
and will be introduced in \S\ref{sec-torus}; here we only mention informally that maps out of $H^{(2)}(\Lambda)$ encode ``central extensions of $\Lambda$ with prescribed commutators''. The horizontal map out of $\underline{\Gamma}{}_e(BT, B^4A(1))$ in \eqref{eq-second-theta-data-intro} attaches an $A(-1)$-valued quadratic form $Q$ to every \'etale metaplectic cover of $T$: this is its discrete invariant.

The diagram \eqref{eq-second-theta-data-intro} has concrete analogues in the $\tn K$-theoretic context. The two maps received by $\underline{\Gamma}{}_e(BT, B^4A(1))$ correspond to the constructions of central extensions by cocycles, respectively by exact sequences of abelian sheaves. The pullback square, on the other hand, is an analogue of Brylinski and Deligne's description of central extensions of $T$ by $\underline K_2$ as those of $\Lambda$ by $\mathbb G_m$ with prescribed commutators, see \cite[Theorem 3.16]{MR1896177}.
\end{void}

\begin{void}\emph{Reductive group schemes.}
\label{void-reductive-intro}
Suppose that $G\rightarrow S$ is a reductive group scheme. There is an \'etale sheaf of based root data $(\Delta\subset\Phi\subset\Lambda, \check{\Delta}\subset\check{\Phi}\subset\check{\Lambda})$, $\Phi\cong\check{\Phi}$ over $S$ associated to $G$. For instance, once a Borel subgroup $B\subset G$ is chosen, then $\Lambda$ is canonically identified with the sheaf of cocharacters of its maximal quotient torus.

Our description of $\underline{\Gamma}{}_e(BG, B^4A(1))$ is supposed to be an extension of the association of based root data to reductive group schemes. First of all, the calculation of \'etale cohomology of $BG$ yields a canonical triangle:
\begin{equation}
\label{eq-reductive-triangle-intro}
\underline{\Maps}{}_{\mathbb Z}(\pi_1G, B^2A) \rightarrow \underline{\Gamma}{}_e(BG, B^4A(1)) \rightarrow \Quad(\Lambda, A(-1))_{\st},
\end{equation}
where $\pi_1G$ is the algebraic fundamental group, and $\Quad(\Lambda, A(-1))_{\st}$ is a subsheaf of the sheaf of $A(-1)$-valued quadratic forms on $\Lambda$ characterized by the equality:
$$
b(\alpha, \lambda) = Q(\alpha)\langle\check{\alpha}, \lambda\rangle,\quad \alpha\in\Delta, \lambda\in\Lambda,
$$
for $b$ the symmetric form associated to $Q$. (It sends $\lambda_1,\lambda_2$ to $Q(\lambda_1+\lambda_2) - Q(\lambda_1) - Q(\lambda_2)$.) We call such quadratic forms \emph{strictly Weyl-invariant}, or simply \emph{strict}. The triangle \eqref{eq-reductive-triangle-intro} reduces to the middle triangle in \eqref{eq-second-theta-data-intro} for $G = T$ a torus, and yields an equivalence:
\begin{equation}
\label{eq-simply-connected-classification-intro}
\underline{\Gamma}{}_e(BG, B^4A(1)) \cong \Quad(\Lambda, A(-1))_{\st}
\end{equation}
when $G$ is simply connected.

To describe $\underline{\Gamma}{}_e(BG, B^4A(1))$, we will need to fix a Borel subgroup $B\subset G$. The restriction of an \'etale metaplectic cover along $B\subset G$ canonically descends to one of the ``universal Cartan'' $T := \Lambda\otimes \mathbb G_m$. It follows rather easily from the fiber sequence \eqref{eq-reductive-triangle-intro} that the following commutative diagram is Cartesian:
\begin{equation}
\label{eq-reductive-classification-square-intro}
\begin{tikzcd}[column sep = 1.5em, row sep = 2em]
	\underline{\Gamma}{}_e(BG, B^4A(1)) \ar[r, "\res_B"]\ar[d] & \underline{\Gamma}{}_e(BT, B^4A(1))_{\st} \ar[d] \\
	\underline{\Gamma}{}_e(BG_{\mathrm{sc}}, B^4A(1)) \ar[r, "\res_{B_{\mathrm{sc}}}"] & \underline{\Gamma}{}_e(BT_{\mathrm{sc}}, B^4A(1))_{\st}
\end{tikzcd}
\end{equation}
where $G_{\mathrm{sc}}$ denotes the simply connected form with its own universal Cartan $T_{\mathrm{sc}}$, and the horizontal functors are the restrictions along $B$ and the induced Borel subgroup $B_{\mathrm{sc}}\subset G_{\mathrm{sc}}$, see Theorem \ref{thm-reductive-classification}.

Combining the description of \'etale metaplectic covers of tori and that of simply connected groups, the Cartesian diagram \eqref{eq-reductive-classification-square-intro} classifies \'etale metaplectic covers of reductive group schemes. We state it in parallel with \cite[Theorem 7.2]{MR1896177}.
\end{void}

\begin{thmx}
\label{thmx-classification}
Suppose that $G\rightarrow S$ is a reductive group scheme equipped with a Borel subgroup $B\subset G$. Then $\underline{\Gamma}{}_e(BG, B^4A(1))$ is equivalent to the sheaf of quadruples $(Q, F, h, \varphi)$ where:
\begin{enumerate}
	\item $Q$ is a strictly Weyl-invariant quadratic form on $\Lambda$;
	\item $F : H^{(2)}(\Lambda) \rightarrow B^2A$ is a $\mathbb Z$-linear morphism;
	\item $h$ is an isomorphism between the restriction of $F$ to $B(\wedge^2\Lambda)$ and a $\mathbb Z$-linear morphism $\wedge^2(\Lambda)\rightarrow BA$ canonically attached to $Q$;
	\item $\varphi$ is an isomorphism between the restriction of the pair $(F, h)$ to $\Lambda_{\mathrm{sc}}$ (i.e.~the span of $\Delta$) and a pair defined by the restriction of $Q$ to $\Lambda_{\mathrm{sc}}$.
\end{enumerate}
\end{thmx}

There is, however, an important caveat in this description: the necessity of fixing a Borel subgroup. If two Borel subgroups $B_1,B_2\subset G$ are conjugate under a section $g\in G$, then we do find an isomorphism of functors $\res_{B_1} \cong \res_{B_2}$. However, this isomorphism \emph{depends} on the section $g$ and not on the Borel subgroups $B_1, B_2$ alone.

If the existence of Borel subgroups seems too restrictive, one can reformulate the classification Theorem \ref{thmx-classification} using a maximal torus as in \cite[Theorem 7.2]{MR1896177}, but the issue persists: for two maximal tori $T_1, T_2$, there is no canonical way to relate the functors defined by restrictions along $T_1,T_2\subset G$.

\subsection{The $\tn L$-group}

\begin{void}
Our main motivation for classifying \'etale metaplectic covers is to define their $\tn L$-groups, following Gaitsgory and Lysenko \cite{MR3769731}. It appears at first sight that \emph{op.cit.}~requires geometric input particular to the function field context. However, once \'etale metaplectic covers are classified, the construction of ``metaplectic dual data'' carries over and an $\tn L$-group in the style of Langlands can also be extracted formally. (In fact, only a small part of the classification is needed for this construction, and the reader who is only interested in the $\tn L$-group can safely skip Section \ref{sec-cup-product} and most of Sections \ref{sec-torus}-\ref{sec-classification}.)

In this construction, we fix a field of coefficients, say $\overline{\mathbb Q}{}_{\ell}$ for a prime $\ell$ invertible over the base scheme $S$, and assume that $A$ is a subsheaf of the constant sheaf $\overline{\mathbb Q}{}_{\ell}^{\times}$. From a reductive group scheme $G\rightarrow S$ equipped with an \'etale metaplectic cover $\mu$, we shall construct a triple $(H, \cal G_{Z_H(\overline{\mathbb Q}{}_{\ell})}, \epsilon)$ called \emph{metaplectic dual data}, where:
\begin{enumerate}
	\item $H$ is an \'etale local system over $S$ of pinned split reductive groups over $\overline{\mathbb Q}{}_{\ell}$;
	\item $\cal G_{Z_H(\overline{\mathbb Q}{}_{\ell})}$ is an \'etale $Z_H(\overline{\mathbb Q}_{\ell})$-gerbe over $S$. Here, $Z_H$ denotes the center of $H$, so $Z_H(\overline{\mathbb Q}{}_{\ell})$ is naturally an \'etale sheaf of abelian groups over $S$;
	\item $\epsilon$ is a homomorphism $\{\pm 1\} \rightarrow Z_H(\overline{\mathbb Q}{}_{\ell})$.
\end{enumerate}

The construction of $H$ has first appeared in the metaplectic Satake isomorphism \cite{MR2684259} \cite{MR2963537} and uses only the quadratic form $Q$ associated to an \'etale metaplectic cover under the second map of \eqref{eq-reductive-triangle-intro}. For example, the sheaf of characters of its maximal torus $T_H$ is precisely the kernel $\Lambda^{\sharp}\subset\Lambda$ of the associated symmetric form $b$. The subsheaf of roots $\Lambda^{\sharp, r}\subset\Lambda^{\sharp}$ is spanned by $\ord(Q(\alpha))\alpha$ for $\alpha\in\Delta$, etc. In particular, the character group of its center $\hat Z_H \cong \Lambda^{\sharp}/\Lambda^{\sharp, r}$ is a subquotient of $\Lambda$. The map $\epsilon$ is simply defined by the restriction of $Q$ to $\Lambda^{\sharp}$, which is $2$-torsion valued.

The construction of $\cal G_{Z_H(\overline{\mathbb Q}{}_{\ell})}$ poses the biggest challenge in both \cite{MR3769731} and Weissman \cite{MR3802418} (where it is termed the ``second twist''). Let us sketch its construction in our context, leaving the details to \S\ref{sec-metaplectic-dual}.
\end{void}

\begin{void}\emph{Construction of $\cal G_{Z_H(\overline{\mathbb Q}{}_{\ell})}$.}
We need an observation concerning \'etale metaplectic covers of tori, see \S\ref{void-tori-intro}. Namely, if the symmetric form $b$ associated to an \'etale metaplectic cover $\mu$ vanishes, then $\mu$ acquires the structure of an \emph{$\mathbb E_{\infty}$-monoidal} morphism $BT \rightarrow B^4A(1)$. Taking loop spaces yields an $\mathbb E_{\infty}$-monoidal morphism $T\rightarrow B^3A(1)$, and upon evaluation at $\mathbb G_m$ viewed as a constant group scheme over $S$, we obtain an $\mathbb E_{\infty}$-monoidal morphism $\Lambda \rightarrow B^2A$.

We shall temporarily assume that the reductive group scheme $G$ has a Borel subgroup $B\subset G$. Given an \'etale metaplectic cover $\mu$ of $G$, we apply the above observation to the torus $T^{\sharp} := \Lambda^{\sharp}\otimes\mathbb G_m$. From the commutative diagram \eqref{eq-reductive-classification-square-intro}, we shall obtain an $\mathbb E_{\infty}$-monoidal $\Lambda^{\sharp} \rightarrow B^2A$ trivialized over $\Lambda^{\sharp, r}$. This gives us an $\mathbb E_{\infty}$-monoidal morphism:
$$
F_{\mu} : \hat Z_H \rightarrow B^2A.
$$
Finally, we observe that the following inclusion:
$$
\underline{\Maps}{}_{\mathbb Z}(\hat Z_H, B^2A) \rightarrow \underline{\Maps}{}_{\mathbb E_{\infty}}(\hat Z_H, B^2A)
$$
admits a retraction, so $F_{\mu}$ defines a $\mathbb Z$-linear morphism $F_{\mu}^0 : \hat Z_H\rightarrow B^2A$. The induced $\mathbb Z$-linear morphism $\hat Z_H \rightarrow B^2(\overline{\mathbb Q}{}_{\ell}^{\times})$ is equivalent to a $Z_H(\overline{\mathbb Q}{}_{\ell})$-gerbe: this is $\cal G_{Z_H(\overline{\mathbb Q}{}_{\ell})}$.
\end{void}

\begin{void}\emph{Independence of $B$.}
In \S\ref{void-reductive-intro}, we have mentioned the dependence of the functor $\res_B$ on the Borel subgroup $B\subset G$. This fact haunts us again in the definition of $\cal G_{Z_H(\overline{\mathbb Q}{}_{\ell})}$.

However, we shall prove that $\cal G_{Z_H(\overline{\mathbb Q}{}_{\ell})}$ does \emph{not} depend on $B$, i.e.~it is functorially associated to the pair $(G, \mu)$. This fact allows us to remove the hypothesis on the existence of a Borel subgroup by \'etale descent.
\end{void}

\begin{rem}
\label{rem-function-field-comparison}
If the base scheme is a smooth curve $X$ over a field $k$, our $Z_H(\overline{\mathbb Q}{}_{\ell})$-gerbe differs slightly from the one constructed in \cite{MR3769731}.\footnote{Since \emph{op.cit.}~is limited to the split case and does not address the dependence on auxiliary data, our construction is slightly more precise.} Namely, the $\{\pm 1\}$-gerbe of \emph{$\vartheta$-characteristics}, i.e.~square roots of the canonical bundle $\omega_{X/k}$, induces a $Z_H(\overline{\mathbb Q}{}_{\ell})$-gerbe $\omega_X^{\epsilon}$ under the homomorphism $\epsilon$, and the gerbe of \emph{op.cit.}~is given by $\cal G_{Z_H(\overline{\mathbb Q}_{\ell})}\otimes\omega_X^{\epsilon}$.
\end{rem}

\begin{void}\emph{The $\tn L$-group.}
When $S$ is the spectrum of a field $F$ with a fixed algebraic closure $\bar F$, the metaplectic dual data $(H, \cal G_{Z_H(\overline{\mathbb Q}_{\ell})}, \epsilon)$ defines an extension of topological groups:
\begin{equation}
\label{eq-classical-L-group-intro}
1 \rightarrow H(\overline{\mathbb Q}{}_{\ell}) \rightarrow {}^LH_F \rightarrow \Gal(\bar F/F) \rightarrow 1,
\end{equation}
upon rigidifying $\cal G_{Z_H(\overline{\mathbb Q}_{\ell})}$ along $\Spec(\bar F)$. Here, we confuse $H$ with its fiber at $\Spec(\bar F)$.

Roughly speaking, this is because $\cal G_{Z_H(\overline{\mathbb Q}{}_{\ell})}$, equipped with its rigidification, defines a Galois $2$-cocycle with values in $Z_H(\overline{\mathbb Q}{}_{\ell})$. Such a $2$-cocycle defines an extension of $\Gal(\bar F/F)$ by $Z_H(\overline{\mathbb Q}{}_{\ell})$ compatible with the Galois action, which then induces \eqref{eq-classical-L-group-intro} along the Galois-equivariant inclusion $Z_H(\overline{\mathbb Q})\subset H(\overline{\mathbb Q}_{\ell})$.
\end{void}

\begin{rem}
If $F$ is a local field of characteristic $\neq 2$, our $\tn L$-group \eqref{eq-classical-L-group-intro} differs slightly from the one constructed in \cite{MR3802418}. Namely, ours is missing the twist by the meta-Galois group $\widetilde{\Gal}(\bar F/F)$, induced along the homomorphism $\epsilon$. There is an analogous discrepancy for a global field $F$.

This difference is in fact the same as the one in Remark \ref{rem-function-field-comparison}: the meta-Galois group \emph{is} the central extension associated to the $\{\pm 1\}$-gerbe of $\vartheta$-characteristics. The reason that we do not incorprate these twists in our definition of the metaplectic dual data, or the $\tn L$-groups, is that they have to do with special features of the base scheme but we prefer to give a purely group-theoretic definition. Incorporating them poses no difficulty in any case.
\end{rem}

\subsection{Metaplectic vs.~quantum}

\begin{void}
In the final \S\ref{sec-de-Rham}, we give a group-theoretic interpretation of the relationship between \'etale metaplectic covers and ``quantum parameters'' of the geometric Langlands program, taking place over a field $k$ of characteristic zero.

The notion of quantum parameters originated in the study of affine Kac--Moody Lie algebras, where it is given by the ``level'' of the central extension of a loop Lie algebra $\fr g\loo{t}$, see \cite{MR1104219}. When $\fr g$ arises as the Lie algebra of a split reductive group scheme $G$, these levels are classified by $G$-invariant symmetric forms $\kappa$ on $\fr g$. They correspond bijectively to the fourth de Rham cohomology classes of $BG$.

In the geometric Langlands program of Beilinson and Drinfeld \cite{beilinson1991quantization}, the base scheme is a smooth, geometrically connected curve $X$ over $k$. The study of Eisenstein series indicates that it is natural to include an extension of coherent sheaves:
$$
0 \rightarrow \Omega_{X/k}^1 \rightarrow E \rightarrow \fr g_{\ab}\otimes\cal O_X \rightarrow 0
$$
in the space of quantum parameters, where $\fr g_{\ab}$ denotes the abelianization of $\fr g$, see \cite{zhao2017quantum}.

The space of pairs $(\kappa, E)$ has the following group-theoretic intepretation: they are the rigidified fourth de Rham cochains on $BG$ living in the second Hodge filtrant, i.e.~the connective part of $\underline{\Gamma}{}_e(BG, \Omega^{\ge 2}[4])$. This definition can be stated over any smooth $k$-scheme $S$ and any reductive group scheme over $S$, not necessarily split.
\end{void}

\begin{void}
We have thus arrived at a number of notions of ``metaplectic covers'' of a reductive group scheme $G\rightarrow S$, all having to do with the fourth cohomology of $BG$. Let us summarize their relationship in the following diagram:
\begin{equation}
\label{eq-metaplectic-vs-quantum}
\begin{tikzcd}[column sep = 0.5em, row sep = 0.5em]
	\begin{Bmatrix}
		\text{central extensions} \\
		\text{of $G$ by $\underline K_2$}
	\end{Bmatrix}
	\ar[r, phantom, "\cong"] & \tau^{\le 0}\underline{\Gamma}{}_e(BG, \mathbb Z(2)[4]) \ar[d] \\
	& \tau^{\le 0}\underline{\Gamma}{}_e(BG, \mathbb Q(2)[4]) \ar[r]\ar[d] &
	\begin{Bmatrix}
		\text{quantum paramters} \\
		\text{associated to $G$}
	\end{Bmatrix}
	\\
	\begin{Bmatrix}
		\text{$\mathbb Q/\mathbb Z(1)$-valued \'etale} \\
		\text{metaplectic covers of $G$}
	\end{Bmatrix}
	\ar[r, phantom, "\cong"] & \tau^{\le 0}\underline{\Gamma}{}_e(BG, \mathbb Q/\mathbb Z(2)[4])
\end{tikzcd}
\end{equation}
Here, the column is given by the motivic cohomology groups of $BG$. Quantum parameters and \'etale metaplectic covers (generalized to infinite torsion abelian groups by taking colimits) arise from them by the de Rham, respectively \'etale realization functors. Diagram \eqref{eq-metaplectic-vs-quantum} is meant to provide some guidance in formulating the compatibility between the metaplectic and quantum geometric Langlands correspondences.

When the base field is $\mathbb C$, one has in addition a Betti realization functor of integral motivic cocycles on $BG$, whose natural target is the space of rigidified cocycles of the Deligne--Beilinson complex $\mathbb Z_D(2)[4]$ (or $\mathbb Q_D(2)[4]$) on the analytification $BG_{\an}$. These Deligne-Beilinson cocycles may be interpreted as classifying ``Chern--Simons theories'' with gauge group the compact real form of $G_{\an}$ and have been studied by Brylinski and McLaughlin \cite{MR1291698} \cite{MR1388845}.
\end{void}

\subsection*{Acknowledgements}

I thank Dennis Gaitsgory and Laurent Clozel for a number of conversations in 2020-21, which prompted me to document some facts about covering groups from a geometric perspective: this is the origin of the present article.

I thank K\k{e}stutis \v{C}esnavi\v{c}ius, Claudius Heyer, Hiroki Kato, and Ana-Maria Raclariu for helpful conversations.

Since the appearance of the first version of this article, titled ``metaplectic group schemes'', I have received generous feedback from Pierre Deligne, Wee Teck Gan, Naoki Imai, Tasho Kaletha, and Masanori Morishita. They have helped me improve the article in major ways. I am especially grateful to Deligne for his criticisms.

Finally, I thank Ildar Gaisin, Naoki Imai, and Teruhisa Koshikawa for their willingness to adopt the framework developed in the present article in an on-going investigation of covering groups over a $p$-adic field.

\medskip

\section{General topology}
\label{sec-general-topology}

The notion of \'etale metaplectic covers we develop is $2$-categorical in an essential way. The $2$-isomorphisms we shall encounter fall into two kinds: ones which exist for \emph{general} reasons and ones which exist for \emph{particular} reasons. While the bulk of the paper deals with the latter, we also need an effective language to handle the former. This language is provided by Lurie's theory of Higher Algebra.

The goal of this section is to record facts from Higher Algebra that we will use in the main body of the text, mostly without explicit mention.

\subsection{Structured spaces}

\begin{void}
\label{void-grothendieck-dictionary}
Let us begin with Grothendieck's dictionary between chain complexes in cohomological degrees $[-1, 0]$ and small, strictly commutative Picard groupoids (\cite[Expos\'e XVIII, \S1.4]{SGA4-3}). Recall: a symmetric monoidal groupoid $\cal A$ is a \emph{Picard groupoid} if the monoidal product $-\otimes a: \cal A\rightarrow\cal A$ is an equivalence for all $a\in\cal A$. It is \emph{strictly commutative} if the commutativity constraint $a_1\otimes a_2\cong a_2\otimes a_1$ is the identity map whenever $a_1 = a_2$.

To a chain complex $K^{-1} \xrightarrow{d} K^0$, we attach a small, strictly commutative Picard groupoid whose objects are elements $a\in K^0$ and there is an isomorphism $a_1\xrightarrow{\sim} a_2$ for each $f\in K^{-1}$ with $df = a_2 - a_1$. This association defines an equivalence of categories between chain complexes in cohomological degrees $[-1,0]$ and small, strictly commutative Picard groupoids (\cite[Expos\'e XVIII, Proposition 1.4.15]{SGA4-3}).

The central objects of this paper---\'etale metaplectic covers---form the $2$-categorical version of a small, strictly commutative Picard groupoid. Homotopical algebra provides us with the tools to concisely express the data defining them.
\end{void}

\begin{void}
Let $\Spc$ (resp.~$\Spc_*$) denote the $\infty$-category of (resp.~pointed) spaces. The stable $\infty$-category of spectra $\Sptr$ is the stabilization of $\Spc_*$ (c.f.~\cite[Proposition 1.4.2.24]{lurie2017higher}).

It is equipped with a canonical functor $\Omega^{\infty} : \Sptr \rightarrow \Spc_*$ and a $t$-structure such that $\Omega^{\infty}$ factors through the $\infty$-category $\Sptr^{\le 0}$ of connective spectra (\cite[Proposition 1.4.3.4]{lurie2017higher}). The resulting functor $\Sptr^{\le 0}\rightarrow \Spc_*$, still denoted by $\Omega^{\infty}$, has left adjoint $\Sigma_+^{\infty}$.
\end{void}

\begin{void}
Let us put the functor $\Omega^{\infty}$ in a different context. For an integer $0\le n\le \infty$, write $\mathbb E_n(\Spc)$ for the $\infty$-category of $\mathbb E_n$-monoid objects in $\Spc$. (See \cite[\S5.1]{lurie2017higher} for the definition of the $\mathbb E_n$-operad.) Equivalently, this is the $\infty$-category of $\mathbb E_n$-algebras in $\Spc$ with respect to the Cartesian symmetric monoidal structure (\cite[Proposition 2.4.2.5]{lurie2017higher}).

We refer to an object of $\mathbb E_n(\Spc)$ simply as an \emph{$\mathbb E_n$-space}. The $\infty$-category $\mathbb E_0(\Spc)$ is equivalent to $\Spc_*$.

For $n\ge 1$, an $\mathbb E_n$-space $\cal A$ is grouplike if and only if $\pi_0(\cal A)$ is a group with respect to the induced monoid structure (\cite[Definition 5.2.6.2, Example 5.2.6.4]{lurie2017higher}). We denote the $\infty$-category of grouplike $\mathbb E_n$-spaces by $\mathbb E_n^{\grp}(\Spc)$. A version of May's recognition theorem (\cite[Remark 5.2.6.26]{lurie2017higher}) states that there is a canonical equivalence of $\infty$-categories:
\begin{equation}
\label{eq-connective-spectra-grouplike-E-infinitey-spaces}
\Sptr^{\le 0}\cong\mathbb E_{\infty}^{\grp}(\Spc).
\end{equation}
Under \eqref{eq-connective-spectra-grouplike-E-infinitey-spaces}, the forgetful functor $\mathbb E_{\infty}^{\grp}(\Spc) \rightarrow \mathbb E_0(\Spc)$ corresponds to $\Omega^{\infty}$. This allows us to view connective spectra as spaces equipped with an ``algebraic'' structure.
\end{void}

\begin{void}
Every commutative ring $R$ may be viewed as an $\mathbb E_{\infty}$-algebra of $\Sptr$ with respect to the smash product (\cite[Remark 7.1.0.3]{lurie2017higher}). The $\infty$-categorical version of the Schwede--Shipley theorem (\cite[Theorem 7.1.2.13]{lurie2017higher}) compares the $\infty$-derived category $D(R)$ and the $\infty$-category $\Mod_R$ of $R$-module spectra. It states that there is a canonical equivalence of symmetric monoidal $\infty$-categories:
\begin{equation}
\label{eq-schwede-shipley-equivalence}
	D(R)\cong \Mod_R.
\end{equation}
Under \eqref{eq-schwede-shipley-equivalence}, the $t$-structure on $R$-module spectra corresponds to the natural $t$-structure on $D(R)$. In particular, the $\infty$-category of connective $R$-module spectra $\Mod_R^{\le 0}$ is equivalent to that of nonpositively graded complexes of $R$-modules $D^{\le 0}(R)$.

This last equivalence can be viewed as a generalization of Grothendieck's dictionary, where the role of small, strictly commutative Picard groupoids is played by connective $\mathbb Z$-module spectra. The equivalence in \S\ref{void-grothendieck-dictionary} is the special case for $R=\mathbb Z$, when both sides are restricted to $1$-coconnective objects.
\end{void}

\begin{void}
The following chain of forgetful functors relates all ``structured spaces'' which we shall consider in this paper:
\begin{align}
\label{eq-functors-structured-spaces}
	\Mod_{R}^{\le 0} \rightarrow \Mod_{\mathbb Z}^{\le 0} &\rightarrow \Sptr^{\le 0} \notag \\
	&\cong \mathbb E_{\infty}^{\grp}(\Spc) \rightarrow \mathbb E_1^{\grp}(\Spc) \rightarrow \Spc_* \rightarrow \Spc.
\end{align}
\end{void}

\begin{lem}
\label{lem-functors-structured-spaces-properties}
All functors in \eqref{eq-functors-structured-spaces} are conservative. They preserve sifted colimits and arbitrary limits.
\end{lem}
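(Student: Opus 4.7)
The plan is to verify the three properties---conservativity, preservation of small limits, and preservation of sifted colimits---for each arrow in \eqref{eq-functors-structured-spaces} separately. For the forgetful functor $\Spc_{*}\cong\Spc_{\pt/}\rightarrow\Spc$, all three follow from the general theory of undercategories: the projection from an undercategory is conservative, preserves all small limits (with basepoint induced by universality), and preserves connected colimits; since sifted $\infty$-categories are in particular connected, sifted colimits are handled.

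For the operadic forgetful functors $\mathbb E_\infty^{\grp}(\Spc)\rightarrow\mathbb E_1^{\grp}(\Spc)\rightarrow\Spc_*$, I would apply the general results on restriction of algebras along an $\infty$-operad map (\cite[\S3.2]{lurie2017higher}): for any morphism of $\infty$-operads $\cal O^{\otimes}\rightarrow\cal O'^{\otimes}$ and a symmetric monoidal $\infty$-category $\cal C$ with sufficient limits and sifted colimits, the restriction $\Alg_{\cal O'}(\cal C)\rightarrow\Alg_{\cal O}(\cal C)$ is conservative and preserves small limits and sifted colimits. Applying this to $\mathbb E_0\rightarrow\mathbb E_1\rightarrow\mathbb E_\infty$ with $\cal C=\Spc$ in its Cartesian symmetric monoidal structure---using $\mathbb E_0(\Spc)\cong\Spc_*$ as recorded in the excerpt---handles the non-grouplike versions. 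To pass to grouplike objects, I would observe that $\mathbb E_n^{\grp}(\Spc)\subset\mathbb E_n(\Spc)$ is defined by a condition on $\pi_0$, and $\pi_0$ commutes with small limits and sifted colimits of $\mathbb E_1$-spaces (the latter being computed on underlying spaces, where filtered colimits and geometric realizations of groups remain groups), so the full subcategory is closed under both and inherits all three properties.

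For the module-stage forgetful functors $\Mod_R^{\le 0}\rightarrow\Mod_{\mathbb Z}^{\le 0}\rightarrow\Sptr^{\le 0}$, the unrestricted analogues $\Mod_R\rightarrow\Mod_{\mathbb Z}\rightarrow\Sptr$ preserve all small limits and colimits and are conservative: they admit extension of scalars as a left adjoint and coinduction as a right adjoint, and equivalences of modules are detected on underlying spectra (\cite[\S4.2]{lurie2017higher}). To descend to connective objects, I would use that the connective truncation $\tau^{\le 0}$ is determined by the homotopy groups of the underlying spectrum and hence commutes with these forgetful functors: limits in the connective subcategory are obtained by applying $\tau^{\le 0}$ to ambient limits, while sifted colimits of connective objects remain connective since the inclusion $\Sptr^{\le 0}\hookrightarrow\Sptr$ is left adjoint to $\tau^{\le 0}$ and therefore preserves colimits.

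The main subtlety to keep in mind is that $\Sptr^{\le 0}$ is \emph{not} closed under general limits in $\Sptr$---for example, the fiber of a map between connective spectra may acquire a nonvanishing $\pi_{-1}$---so in the module stages the truncation step after taking the ambient limit must be handled explicitly. Once this point is properly accounted for, the remainder of the proof is an orderly invocation of standard facts about algebras over $\infty$-operads and modules over $\mathbb E_\infty$-rings.
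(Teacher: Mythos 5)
Your overall strategy coincides with the paper's: factor the chain into the module stages, the operadic stages, and the passage to pointed/underlying spaces, and invoke the standard results of \cite[\S 3.2, \S 3.4, \S 4.6]{lurie2017higher} for each piece. Your treatment of the connective truncations at the module stages is actually more explicit than the paper's (which quietly elides the fact that $\Sptr^{\le 0}$ is not closed under limits in $\Sptr$), and your handling of $\Spc_*\to\Spc$ via undercategories and of the operadic restrictions is equivalent to the paper's citations of \cite[Corollary 3.2.2.4, Proposition 3.2.3.1, Lemma 3.2.2.6]{lurie2017higher}.

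One justification is incorrect as stated, though the conclusion it supports is true: $\pi_0$ does \emph{not} commute with small limits of spaces (consider a pullback), so "the grouplike condition is a condition on $\pi_0$ and $\pi_0$ commutes with limits" does not show that $\mathbb E_n^{\grp}(\Spc)\subset\mathbb E_n(\Spc)$ is closed under limits. The correct argument is to use the definitional characterization of grouplike monoid objects (\cite[Definition 5.2.6.2]{lurie2017higher}): $M$ is grouplike if and only if certain shear maps $M\times M\to M\times M$ are equivalences, and a limit of equivalences is an equivalence; this is what the paper means by "closed under arbitrary limits by definition." Your argument for closure under sifted colimits (computed on underlying spaces, where $\pi_0$ does commute with sifted colimits and sifted colimits of groups in monoids are groups) is fine, and matches \cite[Remark 5.2.6.9]{lurie2017higher}. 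With that one repair the proof is complete.
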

\begin{proof}
The first two functors preserve limits by \cite[Proposition 4.6.2.17]{lurie2017higher}. Since the $\mathbb E_{\infty}$-operad is coherent, we may apply \cite[Corollary 3.4.4.6]{lurie2017higher} to conclude that they preserve arbitrary colimits and deduce from \cite[Corollary 3.4.3.3]{lurie2017higher} that they are conservative.

For the functors on the bottom row, conservativity is a consequence of \cite[Lemma 3.2.2.6]{lurie2017higher}. For any integer $0\le n\le \infty$, the functor $\mathbb E_n(\Spc) \rightarrow \Spc$ preserves arbitrary limits (\cite[Corollary 3.2.2.4]{lurie2017higher}) and \emph{sifted} colimits (\cite[Proposition 3.2.3.1]{lurie2017higher}). Together with conservativity, this implies the same for the functors:
$$
\mathbb E_{\infty}(\Spc) \rightarrow \mathbb E_1(\Spc) \rightarrow \Spc_* \rightarrow \Spc.
$$
Finally, the full subcategory of grouplike objects in $\mathbb E_n(\Spc)$ ($n\ge 1$) is closed under arbitrary limits (by definition) and arbitrary colimits (\cite[Remark 5.2.6.9]{lurie2017higher}).
\end{proof}

\subsection{Sheaves}
\label{sect-sheaves}

\begin{void}
Let $\cal C$ be a site. Denote by $\PShv(\cal C)$ the $\infty$-category of presheaves of spaces on $\cal C$. It contains the full subcategory $\Shv(\cal C) \subset \PShv(\cal C)$ of sheaves of spaces, characterized by the property that for any covering sieve $\cal S\subset\Hom_{\cal C}(-, c)$, the canonical map:
$$
\cal F(c) \rightarrow \lim_{(f : c_1\rightarrow c)\in\cal S} F(c_1)
$$
is an equivalence. The $\infty$-category $\Shv(\cal C)$ is an $\infty$-topos in the sense of \cite[\S6]{MR2522659}.
\end{void}

\begin{void}
\label{void-chain-of-functors-sheaves}
For each $\infty$-category in \eqref{eq-functors-structured-spaces}, we may consider the $\infty$-category of (pre)sheaves valued in it. Since the functors connecting them preserve limits (Lemma \ref{lem-functors-structured-spaces-properties}), the corresponding functors on presheaves preserve the full subcategories of sheaves.

Lemma \ref{lem-functors-structured-spaces-properties} also implies that sheaves valued in $\mathbb E_{\infty}^{\grp}(\Spc)$ (resp.~$\mathbb E_1^{\grp}(\Spc)$ or $\Spc_*$) are canonically equivalent to grouplike $\mathbb E_{\infty}$-monoids (resp.~grouplike $\mathbb E_1$-monoids or pointed objects) in $\Shv(\cal C)$. In particular, \eqref{eq-functors-structured-spaces} gives rise to a chain of $\infty$-categories:
\begin{align}
\label{eq-sheaves-valued-in-structured-spaces}
	\Shv(\cal C, \Mod_{R}^{\le 0}) \rightarrow \Shv(\cal C, \Mod_{\mathbb Z}^{\le 0}) &\rightarrow \Shv(\cal C, \Sptr^{\le 0}) \notag \\
	&\cong \mathbb E_{\infty}^{\grp}(\Shv(\cal C)) \rightarrow \mathbb E_1^{\grp}(\Shv(\cal C)) \rightarrow \Shv_*(\cal C) \rightarrow \Shv(\cal C).
\end{align}
The functors in \eqref{eq-sheaves-valued-in-structured-spaces} are conservative and limit-preserving. We could add to \eqref{eq-sheaves-valued-in-structured-spaces} a limit-preserving (but evidently not conservative) functor $\Shv(\cal C, \Mod_R) \rightarrow \Shv(\cal C, \Mod_R^{\le 0})$ defined by $\tau^{\le 0}$ on the underlying presheaves.
\end{void}

\begin{void}
Let $\cal A$ be a symmetric monoidal $\infty$-category with arbitrary limits and colimits and such that the unit $\mathbf 1$ is both final and initial. Then for $n\ge 0$, the $\infty$-category of $\mathbb E_n$-algebras of $\cal A$ and the $\infty$-category of $\mathbb E_n$-algebras of $\cal A^{\mathrm{op}}$ (i.e.~$\mathbb E_n$-coalgebras of $\cal A$) are related by a pair of adjoint functors (\cite[Remark 5.2.3.6]{lurie2017higher}):
$$
\begin{tikzcd}[column sep = 1.5em]
	\mathrm{Bar}^{(n)} : \mathbb E_n(\cal A) \ar[r, shift left = 0.5ex] & \mathbb E_n(\cal A^{\mathrm{op}})^{\mathrm{op}} : \mathrm{Cobar}^{(n)}. \ar[l, shift left = 0.5ex]
\end{tikzcd}
$$
We shall apply this construction to $\cal A = \Shv(\cal C, \Mod_R^{\le 0})$, $\Shv(\cal C, \Sptr^{\le 0})$, $\Shv_*(\cal C)$, equipped with the \emph{Cartesian} symmetric monoidal structure.

In the first two cases, the Cartesian symmetric monoidal structure coincides with the co-Cartesian one, so the forgetful functor $\mathbb E_n(\cal A) \rightarrow \cal A$ is an equivalence (\cite[Proposition 2.4.3.9]{lurie2017higher}). Under this equivalence, $\mathrm{Bar}^{(n)}$ corresponds to $n$-fold suspension $[n]$ (\cite[Example 5.2.2.4]{lurie2017higher}). Since the functor $\Omega^{\infty} : \Shv(\cal C, \Sptr^{\le 0})\rightarrow \Shv_*(\cal C)$ is symmetric monoidal, the following diagram is commutative:
\begin{equation}
\label{eq-suspension-versus-bar}
\begin{tikzcd}
	\Shv(\cal C, \Sptr^{\le 0}) \ar[r, "{[n]}"]\ar[d, "\mathbb E_n(\Omega^{\infty})"] & \Shv(\cal C, \Sptr^{\le 0}) \ar[d, "\Omega^{\infty}"] \\
	\mathbb E_n^{\grp}(\Shv(\cal C)) \ar[r, "\mathrm{B}^n"] & \Shv_*(\cal C)
\end{tikzcd}
\end{equation}
Here, the functor $\mathrm B^n$ is obtained from $\mathrm{Bar}^{(n)}$ by composing with the functor forgetting the $\mathbb E_n$-coalgebra structure. We have an analogous commutative diagram when we replace $\Sptr^{\le 0}$ in the top row by $\Mod_R^{\le 0}$.
\end{void}

\begin{void}
\label{void-bar-construction-equivalence}
Given $\cal F\in\Shv(\cal C)$ and integer $n\ge 0$, we let $\pi_n\cal F$ denote the sheafification of the presheaf $c\mapsto \pi_n(\cal F(c))$. Write $\Shv(\cal C)_{\ge n}$ for the full subcategory of $\Shv(\cal C)$ consisting of objects $\cal F$ with $\pi_k\cal F = 0$ for all $0\le k < n$. Namely, this is the $\infty$-category of $n$-connective objects of $\Shv(\cal C)$. We also use the notation $\Shv_*(\cal C)_{\ge n}$ for the pointed version.

Because $\Shv(\cal C)$ is an $\infty$-topos, $B^{n}$ in \eqref{eq-suspension-versus-bar} defines an equivalence of $\infty$-categories onto $\Shv_*(\cal C)_{\ge n}$ (\cite[Theorem 5.2.6.15]{lurie2017higher}).
\end{void}

\begin{void}
\label{void-iterated-bar-construction-of-abelian-groups}
Let us mention a concrete description of $\mathrm B := \mathrm B^{1}$. Every $\mathbb E_1$-monoid in $\Shv(\cal C)$ defines a simplicial object $\cal F^{[n]}$ ($[n]\in\Delta^{\mathrm{op}}$) of $\Shv(\cal C)$ with $\cal F^{[0]} \cong \pt$. The functor $\tn B$ sends it to the geometric realization $\colim_{[n]} \cal F^{[n]}$, pointed by $\cal F^{[0]}$ (\cite[Example 5.2.6.13]{lurie2017higher}).

In particular, given a sheaf of groups $H$ on $\cal C$, $\mathrm B(H)$ is the classifying stack of $H$ in the classical sense. For a sheaf of abelian groups (resp.~$R$-modules) $A$ on $\cal C$ and $n\ge 1$, we view $\mathrm B^{n}(A)$ as the $n$-fold classifying stack of $A$. The commutative diagram \eqref{eq-suspension-versus-bar} tells us that $\mathrm B^{n}(A)$ is canonically identified with the image of $A[n]$ under $\Omega^{\infty}$. In particular, $\mathrm B^{n}(A)$ inherits the structure of a sheaf valued in $\Sptr^{\le 0}$ (resp.~$\Mod_R^{\le 0}$).
\end{void}

\begin{void}
\label{void-monoidal-morphisms-classified-by-groups}
For later purposes, we shall describe the mapping space $\Maps_{\mathbb E_1}(H, \mathrm B^{n+1}(A))$ (where $n\ge 1$) for $H$ a sheaf of groups and $A$ a sheaf of abelian groups. Taking fibers at $* \rightarrow \mathrm B^{n+1}(A)$ defines a functor:
\begin{equation}
\label{eq-monoidal-morphisms-classified-by-groups}
\Maps_{\mathbb E_1}(H, \mathrm B^{n+1}(A)) \rightarrow \mathbb E_1(\Shv(\cal C))_{\pi_0 = H, \pi_n = A}.
\end{equation}
The target denotes the $\infty$-category of objects $\cal G\in\mathbb E_1(\Shv(\cal C))$ equipped with isomorphisms $\pi_0\cal G \cong H$, $\pi_n\cal G \cong A$, and satisfying two properties:
\begin{enumerate}
	\item $\pi_k\cal G = 0$ for all $k\neq 0,n$;
	\item the Postnikov tower of $\cal G$ is convergent (see \cite[\S5.5.6]{MR2522659}).
\end{enumerate}
Such $\cal G$ is automatically grouplike.
\end{void}

\begin{lem}
\label{lem-monoidal-morphisms-classified-by-groups}
For each $n\ge 1$, the functor \eqref{eq-monoidal-morphisms-classified-by-groups} is an equivalence.
\end{lem}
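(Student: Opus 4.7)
The plan is to deloop both sides of the functor using the equivalence $\mathrm B : \mathbb E_1^{\grp}(\Shv(\cal C)) \xrightarrow{\sim} \Shv_*(\cal C)_{\geq 1}$ from \S\ref{void-bar-construction-equivalence}, and then identify the resulting question with the standard classification of pointed connected sheaves with a two-stage Postnikov tower by their $k$-invariants. First, since $\mathrm B^{(n+1)}(A) \simeq \mathrm B(\mathrm B^{(n)}(A))$ as grouplike $\mathbb E_1$-monoids, the full faithfulness of $\mathrm B$ gives
$$
\Maps_{\mathbb E_1}\bigl(H, \mathrm B^{(n+1)}(A)\bigr) \;\simeq\; \Maps_{\Shv_*(\cal C)}\bigl(\mathrm B H, \mathrm B^{(n+2)}(A)\bigr).
$$
Second, every $\cal G$ in the target of \eqref{eq-monoidal-morphisms-classified-by-groups} is grouplike (its $\pi_0$ is the group $H$), so we may form $\mathrm B\cal G$; inspection of its homotopy sheaves shows $\pi_1(\mathrm B\cal G) \cong H$, $\pi_{n+1}(\mathrm B\cal G) \cong A$, and all others vanish. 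Hence the target of \eqref{eq-monoidal-morphisms-classified-by-groups} identifies, via $\mathrm B$, with the full subcategory $\Shv_*(\cal C)_{\pi_1 = H,\,\pi_{n+1} = A} \subset \Shv_*(\cal C)_{\geq 1}$ of pointed sheaves with that Postnikov profile.

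Next, I would check that the functor \eqref{eq-monoidal-morphisms-classified-by-groups} is intertwined under these delooped descriptions with the fiber-taking functor in $\Shv_*(\cal C)$: applying $\mathrm B$ to the fiber sequence $\mathrm{fib}(f) \to H \to \mathrm B^{(n+1)}(A)$ of grouplike $\mathbb E_1$-objects (and invoking that $\mathrm B$, being an equivalence, preserves limits) yields the fiber sequence $\mathrm B\,\mathrm{fib}(f) \to \mathrm B H \to \mathrm B^{(n+2)}(A)$. So the claim reduces to showing that
$$
\Maps_{\Shv_*(\cal C)}\bigl(\mathrm B H, \mathrm B^{(n+2)}(A)\bigr) \;\longrightarrow\; \Shv_*(\cal C)_{\pi_1 = H,\,\pi_{n+1} = A}, \qquad \bar f\longmapsto \mathrm{fib}(\bar f),
$$
is an equivalence. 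For this, any $\cal F$ on the right fits into its Postnikov fiber sequence
$$
\mathrm B^{(n+1)}(A) \;\simeq\; \tau_{\geq n+1}\cal F \;\longrightarrow\; \cal F \;\longrightarrow\; \tau_{\leq n}\cal F \;\simeq\; \mathrm B H,
$$
where the outer identifications use the vanishing of all intermediate homotopy sheaves. Such a principal $\mathrm B^{(n+1)}(A)$-fibration over $\mathrm B H$ in the $\infty$-topos $\Shv_*(\cal C)$ is classified by a pointed map $\mathrm B H \to \mathrm B(\mathrm B^{(n+1)}(A)) = \mathrm B^{(n+2)}(A)$, whose fiber recovers $\cal F$. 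This is the classical $k$-invariant description of Postnikov towers in an $\infty$-topos (\cite{MR2522659}, \S7.2), and naturality of the Postnikov tower promotes it to an equivalence of $\infty$-categories.

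The hard part will be the last step, bookkeeping the coefficient system: to guarantee that the $k$-invariant lives in the untwisted space $\Maps_{\Shv_*}(\mathrm B H, \mathrm B^{(n+2)}(A))$, one must know that the conjugation action of $\pi_1(\cal F) = H$ on $\pi_{n+1}(\cal F) = A$ is trivial. This is automatic for $\cal F = \mathrm B\,\mathrm{fib}(f)$, because the conjugation of $H = \pi_0(\mathrm{fib}(f))$ on $\pi_n(\mathrm{fib}(f))$ is pulled back from that of $\pi_0(\mathrm B^{(n+1)}(A)) = 0$ on $\pi_{n+1}(\mathrm B^{(n+1)}(A)) = A$, which is trivial; and it is built into the identifications $\pi_0\cal G \cong H$ and $\pi_n\cal G \cong A$ (respecting the natural $H$-module structures with $A$ carrying the trivial action) implicit in the definition of the target of \eqref{eq-monoidal-morphisms-classified-by-groups}. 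Once this compatibility is confirmed, the three steps above compose to the desired equivalence.
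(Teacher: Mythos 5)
Your argument is correct and follows the same route as the paper's proof: deloop both sides via $\mathrm B$, then invoke the Postnikov/$k$-invariant classification of pointed connected sheaves with homotopy concentrated in degrees $1$ and $n+1$, and transport back along the inverse of $\mathrm B$. Your closing remark about the coefficient system is a genuine subtlety that the paper's two-line proof elides: the equivalence does require the conjugation action of $\pi_0\cal G$ on $\pi_n\cal G$ to be trivial (in spaces, $O(2)$ is a grouplike $\mathbb E_1$-monoid with $\pi_0=\mathbb Z/2$, $\pi_1=\mathbb Z$ and nontrivial conjugation action, hence not a fiber of any $\mathbb E_1$-map $\mathbb Z/2\rightarrow \mathrm B^{(2)}\mathbb Z$), so your verification that the action is trivial for fibers of maps to $\mathrm B^{(n+1)}(A)$, together with reading that condition into the definition of the target of \eqref{eq-monoidal-morphisms-classified-by-groups}, is exactly what is needed to make the statement hold.
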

\begin{proof}
The functor $\mathrm B$ defines an equivalence of mapping spaces (\S\ref{void-bar-construction-equivalence}):
$$
\Maps_{\mathbb E_1}(H, \mathrm B^{n+1}(A))\cong \Maps_*(\mathrm B(H), \mathrm B^{n+2}(A)).
$$
Using Postnikov truncation, the latter space is equivalent to the space of objects $\cal G_1\in\Shv_*(\cal C)$ with a convergent Postnikov tower and equipped with isomorphisms $\pi_1(\cal G_1) \cong H$, $\pi_{n+1}(\cal G_1) \cong A$, and $\pi_k(\cal G_1) = 0$ for all $k\neq 1, n+1$. Applying the inverse of $\mathrm B$ yields the desired equivalence.
\end{proof}

\begin{rem}
\label{rem-monoidal-morphisms-classified-by-groups}
\begin{enumerate}
	\item For $n = 0$, the space of $\mathbb E_1$-monoidal morphisms $H \rightarrow \mathrm B(A)$ is equivalent to the groupoid of central extensions of $H$ by $A$. By analogy, one may interpret Lemma \ref{lem-monoidal-morphisms-classified-by-groups} as saying that for $n\ge 1$, any $\mathbb E_1$-monoidal extension of $H$ by $\mathrm B^{n}A$ is automatically ``central'' (although we do not define this term).
	\item For $n\ge 1$ and $H$ abelian, one may consider $\Maps_{\mathbb E_k}(H, \mathrm B^{n+1}(A))$ for any $0\le k\le\infty$. This space is equivalent to $\mathbb E_k$-objects of $\Shv(\cal C)$ equipped with isomorphisms $\pi_0\cal G \cong H$, $\pi_n\cal G \cong A$, and satisfying the conditions of \S\ref{void-monoidal-morphisms-classified-by-groups}. Indeed, for finite $k$ one argues as in Lemma \ref{lem-monoidal-morphisms-classified-by-groups}. For $k = \infty$, one argues instead in $\Sptr^{\le 0}$.
\end{enumerate}
\end{rem}

\subsection{Complexes}
\label{sect-complexes}

\begin{void}
In light of \eqref{eq-schwede-shipley-equivalence}, an object of the $\infty$-category $\Shv(\cal C, \Mod_R)$ can be viewed as a sheaf of complexes of $R$-modules. In the classical context, we are more accustomed to complexes of sheaves of $R$-modules. Let us explain how these two points of views are related.

We continue to fix a site $\cal C$ and a commutative ring $R$. The construction of $\pi_n\cal F$ for an object $\cal F\in\Shv(\cal C, \Mod_R)$ defines a $t$-structure on $\Shv(\cal C, \Mod_R)$. Its heart is identified with the category of sheaves of $R$-modules on $\cal C$. According to \cite[Corollary 2.1.2.4]{lurie2018spectral}, we have a fully faithful, $t$-exact functor:
\begin{equation}
\label{eq-complexes-of-sheaves-to-sheaves-of-complexes}
D^+(\Shv(\cal C, \Mod_R)^{\heartsuit}) \rightarrow \Shv(\cal C, \Mod_R).
\end{equation}
The essential image of \eqref{eq-complexes-of-sheaves-to-sheaves-of-complexes} is the full subcategory $\Shv(\cal C, \Mod_R)^{>-\infty}$ of left-bounded objects.
\end{void}

\begin{void}
Let $\mathrm{ob} : \Shv(\cal C, \Mod_R) \rightarrow \PShv(\cal C, \Mod_R)$ denote the forgetful functor. Since $\mathrm{ob}$ is a right adjoint, it is left $t$-exact. We denote by $R(\mathrm{ob}^{\heartsuit})$ the right derived functor of its truncation. Explicitly, given a left-bounded complex $\cal F$ of sheaves of $R$-modules, $R(\mathrm{ob}^{\heartsuit})(\cal F)$ is the presheaf whose value at $c\in\cal C$ is the complex of $R$-modules $R\Gamma(c, \cal F)$.

The following commutative diagram arises from the analogous diagram for the sheafification functor by passing to the right adjoint:
\begin{equation}
\label{eq-forgetful-functor-derived}
\begin{tikzcd}[column sep = 1em]
	D^+(\Shv(\cal C, \Mod_R)^{\heartsuit}) \ar[r, "\cong"]\ar[d, "R(\mathrm{ob}^{\heartsuit})"] & \Shv(\cal C, \Mod_R)^{>-\infty} \ar[d, "\mathrm{ob}"] \\
	D^+(\PShv(\cal C, \Mod_R)^{\heartsuit}) \ar[r, "\cong"] & \PShv(\cal C, \Mod_R)^{>-\infty}
\end{tikzcd}
\end{equation}
It implies that the image of $\cal F$ under \eqref{eq-complexes-of-sheaves-to-sheaves-of-complexes} has an explicit description: its value at $c\in\cal C$ is the complex of $R$-modules $R\Gamma(c, \cal F)$.
\end{void}

\begin{lem}
\label{lem-homotopy-groups-iterated-bar-construction}
Suppose that $A$ is a sheaf of $R$-modules on $\cal C$. For integers $0\le k\le n$ and $c\in\cal C$, there is a canonical isomorphism of $R$-modules:
$$
\pi_k\Gamma(c, B^{n}(A)) \cong H^{n-k}(c, A).
$$
\end{lem}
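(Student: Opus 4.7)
The plan is to lift $\mathrm B^{(n)}(A)$ from a sheaf of pointed spaces to a sheaf of connective $R$-module spectra and compute $\pi_k$ via the Schwede--Shipley identification with chain complexes. Since $\Gamma(c,-)$ is a limit, it commutes with the relevant forgetful functors, so the computation reduces to understanding the shifted complex $A[n]$ evaluated at $c$.

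First I would invoke the $\Mod_R$-analogue of the commutative diagram \eqref{eq-suspension-versus-bar}, mentioned at the end of \S\ref{void-iterated-bar-construction-of-abelian-groups}: viewing $A$ as an object of the heart of $\Shv(\cal C, \Mod_R^{\le 0})$, the iterated bar construction $\mathrm B^{(n)}(A)$ is canonically identified with $\Omega^\infty(A[n])$, where $A[n]$ denotes the $n$-fold cohomological shift (which still lies in $\Shv(\cal C, \Mod_R^{\le 0})$, as $A$ is in the heart). Because evaluation at $c\in\cal C$ is a right adjoint and hence commutes with the limit-preserving functor $\Omega^\infty$, we obtain
\begin{equation*}
\Gamma(c, \mathrm B^{(n)}(A)) \cong \Omega^\infty\bigl(\Gamma(c, A[n])\bigr).
\end{equation*}

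Next I would use the fully faithful embedding \eqref{eq-complexes-of-sheaves-to-sheaves-of-complexes} to regard $A[n]$ as a left-bounded complex of sheaves of $R$-modules concentrated in cohomological degree $-n$. The commutative diagram \eqref{eq-forgetful-functor-derived} then identifies $\Gamma(c, A[n])$ with $R\Gamma(c, A)[n]$ as an object of $\Mod_R$. Under Schwede--Shipley \eqref{eq-schwede-shipley-equivalence}, the $k$-th homotopy group of a chain complex is its $(-k)$-th cohomology, so
\begin{equation*}
\pi_k\Gamma(c, \mathrm B^{(n)}(A)) = \pi_k\bigl(R\Gamma(c, A)[n]\bigr) = H^{n-k}(R\Gamma(c, A)) = H^{n-k}(c, A),
\end{equation*}
as required. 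The hypothesis $0\le k\le n$ ensures that the output degree $n-k$ is non-negative and that $A[n]$ remains connective, so that no information is lost under $\Omega^\infty$.

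I do not foresee a serious obstacle: the argument is a bookkeeping exercise combining three pieces of machinery already recorded in \S\ref{sec-general-topology}---the refinement of $\mathrm B^{(n)}$ to $\Omega^\infty\circ[n]$ on connective $R$-module spectra, the fully faithful embedding of left-bounded complexes of sheaves into sheaves of complexes, and the Schwede--Shipley equivalence $\Mod_R\cong D(R)$. The only point deserving care is to track that the $R$-module enhancement (not merely the spectrum enhancement) is the one used throughout, so that the output of the computation is an $R$-module, as asserted.
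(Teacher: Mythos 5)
Your argument is correct and is exactly the paper's proof, merely written out in full: the paper's one-line justification cites precisely the identification $\mathrm B^{(n)}(A)\cong\Omega^\infty(A[n])$ from \S\ref{void-iterated-bar-construction-of-abelian-groups} and the diagram \eqref{eq-forgetful-functor-derived} identifying sections at $c$ with $R\Gamma(c,-)$. Your added care about the truncation (the hypothesis $0\le k\le n$) and about tracking the $R$-module enhancement is sound and consistent with what the paper leaves implicit.
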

\begin{proof}
This follows from \eqref{eq-forgetful-functor-derived} and the discussion in \S\ref{void-iterated-bar-construction-of-abelian-groups}.
\end{proof}

\medskip

\section{\'Etale metaplectic covers}
\label{sec-metaplectic-covers}

Let $S$ be a scheme and $G \rightarrow S$ be a group scheme of finite type. Suppose that $A$ is a locally constant \'etale sheaf of finite abelian groups on $S$ whose order is invertible on $S$.

This is the general context in which we will define metaplectic covers.

\begin{defn}
An \emph{\'etale metaplectic cover} of $G$ with values in $A$ is a morphism of pointed \'etale stacks $BG \rightarrow B^4A(1)$.

Thus, $A$-valued \'etale metaplectic covers of $G$ form the groupoid $\Gamma_e(BG, B^4A(1))$, viewed as sections of $B^4A(1)$ over $BG$ rigidified along $e : S\rightarrow BG$ (i.e.~equipped with a trivialization after pulling back by $e$).
\end{defn}

Here, the Tate twist $A(1)$ is defined as $A(1) := A\otimes\mathbb G_m[-1]$ or equivalently $\lim_N(A\otimes\mu_N)$, where the transition maps are given by $\mu_{N_1} \twoheadrightarrow \mu_N$, $a\mapsto a^{N_1/N}$ whenever $N\mid N_1$. Then $A(1)$ is again a locally constant \'etale sheaf of finite abelian groups on $S$. Likewise, we write $A(-1)$ for $\colim_N(A\otimes\mu_N^{\otimes -1})$.

In the remainder of this section, we first explain how to obtain covering groups in the classical sense from \'etale metaplectic covers. This construction is a reformulation of Deligne \cite[\S2, \S5-6]{MR1441006}, although the language of higher categories allows us to avoid direct contact with resolutions. Then we explain in \S\ref{sect-relation-with-K2} the relationship between \'etale metaplectic covers and central extensions of $G$ by $\underline K_2$, which may be viewed as an analogue of our theory with ``integral coefficients''.

\begin{rem}
\label{rem-base-point-free}
It is sometimes natural to forget the distinguished point of the \'etale stack $BG$. Indeed, a morphism of \'etale stacks $\mu : BG \rightarrow B^4A(1)$ automatically induces an \'etale metaplectic cover of $G_P := \underline{\Aut}(P)$ for any $G$-torsor $P\rightarrow S$: writing $S\xrightarrow{e_P} BG \xrightarrow{p} S$ where $e_P$ corresponds to $P$, we form $\mu_P := \mu\otimes(p^*e_P^*\mu)^{\otimes -1}$.

We may view morphisms $BG \rightarrow B^4A(1)$ as defining \'etale metaplectic covers on all pure inner forms of $G$ at once. For a chosen $G$-torsor $P\rightarrow S$, we have a projection $\mu \mapsto \mu_P$ to the groupoid of \'etale metaplectic covers of $G_P$, for which the functor of forgetting the pointing provides a section.
\end{rem}

\subsection{Local fields}
\label{sect-metaplectic-cover-local}

\begin{void}
Suppose that $S$ is the spectrum of a nonarchimedean local field $F$. Fix an algebraic closure $F\subset \bar F$ which defines the Galois group $\Gal(\bar F/F)$. In particular, $A$ may be viewed as a finite abelian group equipped with a $\Gal(\bar F/F)$-action.

From a pointed morphism $BG \rightarrow B^4A(1)$, we shall functorially attach a central extension of topological groups:
\begin{equation}
\label{eq-classical-metaplectic-cover-local}
	1 \rightarrow H_0(F, A) \rightarrow \tilde G \rightarrow G(F) \rightarrow 1,
\end{equation}
where $H_0(F, A)$ denotes the $\Gal(\bar F/F)$-coinvariants of $A$, i.e.~the zeroth homology group, and is equipped with the discrete topology.

In the case $F = \mathbb R$, we will in general only obtain a central extension of a subgroup of $G(\mathbb R)$ which contains the neutral component; this will be addressed in \S\ref{void-metaplectic-cover-local-construction-real}. The case $F = \mathbb C$ will give canonically split extensions of $G(\mathbb C)$.
\end{void}

\begin{void}
The construction of \eqref{eq-classical-metaplectic-cover-local} uses local Tate duality \cite[Theorem 2.1]{MR0175892}. Write $A^* := \underline{\Hom}(A, \mathbb Q/\mathbb Z)$. Then $A^*$ is in Cartier duality with $A(1)$, so cup product defines a perfect pairing on Galois cohomology groups:
\begin{equation}
\label{eq-tate-duality-pairing}
H^0(F, A^*) \otimes H^2(F, A(1)) \rightarrow H^2(F, \mathbb G_m) \cong \mathbb Q/\mathbb Z,
\end{equation}
where the second map is given by Brauer invariants.

Therefore, $H^2(F, A(1))$ is identified with the Pontryagin dual of $H^0(F, A^*)$, which agrees with the homology group $H_0(F, A)$. We summarize this isomorphism together with the vanishing statements of \cite[Theorem 2.1]{MR0175892}:
\begin{equation}
\label{eq-tate-duality-homology}
H^i(F, A(1)) \cong
\begin{cases}
	H_0(F, A) & i = 2 \\
	0 & i\ge 3
\end{cases}
\end{equation}
\end{void}

\begin{void}
Given a pointed morphism $\mu : BG \rightarrow B^4A(1)$, we may form an $\mathbb E_1$-monoidal morphism $\Omega(\mu) : G \rightarrow B^3A(1)$ by taking loop stacks. Evaluation at $\Spec(F)$ then yields maps of $\mathbb E_1$-monoidal groupoids:
\begin{align}
\notag
G(F) &\rightarrow  \Gamma(F, B^3A(1))\\
& \cong B(\Gamma(F, B^2A(1))) \xrightarrow{\pi_0} B(H^2(F, A(1))) \cong B(H_0(F, A)).
\label{eq-construction-classical-metaplectic-cover-local}
\end{align}
where the isomorphisms owe to the calculations \eqref{eq-tate-duality-homology}. This composition defines a central extension \eqref{eq-classical-metaplectic-cover-local} as an abstract group.
\end{void}

\begin{void}
In order to equip $\tilde G$ with a topology making the maps in \eqref{eq-classical-metaplectic-cover-local} continuous, it suffices to construct a family of local sections of $\tilde G\rightarrow G(F)$ satisfying the properties (a)--(c) of \cite[\S2.9]{MR1441006}.

Let us paraphrase the construction of \emph{op.cit.} in our language. Consider the $\mathbb E_1$-monoidal morphism $\Omega(\mu) : G \rightarrow B^3A(1)$ corresponding to $\mu$ and write $G^{\dagger}$ for the fiber of $\Omega(\mu)$ at the base point. Then $G^{\dagger}$ is an \'etale sheaf of $2$-groupoids and the projection $G^{\dagger}(F) \rightarrow G(F)$ factors through $\tilde G$.

Pullback along any $g\in G(F)$ defines a section:
$$
g^*\Omega(\mu) \in \Gamma(F, B^3A(1)) \cong \colim_{\Spec(F)\rightarrow U} \Gamma(U, B^3A(1)),
$$
the colimit being taken over \'etale neighborhoods $U$ of $g$. The null-homotopy of $g^*\Omega(\mu)$ is thus defined over some such $U$. By construction, $U \rightarrow G$ lifts to $G^{\dagger}$, giving a commutative diagram:
$$
\begin{tikzcd}[column sep = 0.5em, row sep = 1.5em]
	& G^{\dagger}(F)\ar[d] \\
	U(F) \ar[ur]\ar[dr] & \tilde G \ar[d] \\
	& G(F)
\end{tikzcd}
$$

Since $U\rightarrow G$ is \'etale and $g\in G(F)$ lifts to $U(F)$, we find local sections of $U(F)\rightarrow G(F)$ at $g$ by the implicit function theorem. (This step uses the finite type hypothesis on $G$.) They define sections for the projection $\tilde G\rightarrow G(F)$.
\end{void}

\begin{rem}
Let $\cal O\subset F$ denote the ring of integers and suppose that $A$ extends to a locally constant \'etale sheaf of finite abelian groups on $\Spec(\cal O)$ with invertible order, i.e.~indivisible by the residue characteristic.

If $G$ is an affine group scheme over $\Spec(\cal O)$, then the central extension \eqref{eq-classical-metaplectic-cover-local} acquires a canonical splitting over the subgroup $G(\cal O)\subset G(F)$. Indeed, the vanishing of $H^2(\cal O, A(1))$ implies that \eqref{eq-construction-classical-metaplectic-cover-local} is null-homotopic when restricted to $G(\cal O)$.
\end{rem}

\begin{void}
\label{void-metaplectic-cover-local-construction-real}
When $F = \mathbb R$, we replace \eqref{eq-tate-duality-pairing} by the perfect pairing on Tate cohomology groups:
\begin{equation}
\label{eq-tate-duality-real-place}
\hat H^0(\mathbb R, A^*)\otimes H^2(\mathbb R, A(1)) \rightarrow H^2(\mathbb R, \mathbb G_m) \cong \frac{1}{2}\mathbb Z/\mathbb Z.
\end{equation}
Recall that $\hat H^0(\mathbb R, A)$ is defined by an exact sequence involving the norm map:
\begin{equation}
\label{eq-tate-cohomology-groups}
0\rightarrow\hat H^{-1}(\mathbb R, A) \rightarrow H_0(\mathbb R, A) \xrightarrow{\mathrm{Nm}} H^0(\mathbb R, A)\rightarrow\hat H^0(\mathbb R, A)\rightarrow 0.
\end{equation}
We obtain from \eqref{eq-tate-duality-real-place} the following isomorphism:
\begin{equation}
\label{eq-tate-duality-real-place-isomorphism}
H^2(\mathbb R, A(1)) \cong \hat H^{-1}(\mathbb R, A).
\end{equation}

The other change in the construction has to do with possibly nonzero $H^3(\mathbb R, A(1))$. For a pointed morphism $\mu : BG \rightarrow B^4A(1)$, the induced $\mathbb E_1$-monoidal map $\Omega(\mu) : G \rightarrow B^3A(1)$ defines a homomorphism:
\begin{equation}
\label{eq-classical-metaplectic-construction-real-obstruction}
G(\mathbb R) \rightarrow H^3(\mathbb R, A(1)).
\end{equation}

We write $G(\mathbb R)^0$ for the kernel of \eqref{eq-classical-metaplectic-construction-real-obstruction}. Then the same construction as above, using \eqref{eq-tate-duality-real-place-isomorphism} instead of \eqref{eq-tate-duality-pairing}, gives a central extension of topological groups:
\begin{equation}
\label{eq-classical-metaplectic-cover-real}
1 \rightarrow \hat H^{-1}(\mathbb R, A) \rightarrow \tilde G \rightarrow G(\mathbb R)^0 \rightarrow 1.
\end{equation}
Finally, we also obtain a central extension of $G(\mathbb R)^0$ by $H_0(\mathbb R, A)$ by inducing along the first map in \eqref{eq-tate-cohomology-groups}.
\end{void}

\subsection{Global fields}
\label{sect-metaplectic-cover-global}

\begin{void}
We now suppose that $S$ is the spectrum of a global field $F$ and $G$ is affine. As before, we fix an algebraic closure $F\subset \bar F$. Let $\mathbb A_F$ denote the topological ring of ad\`eles of $F$.

Let $\mu : BG \rightarrow B^4A(1)$ be a pointed morphism over $\Spec(F)$. Then restriction of $\mu$ to each place $v$ of $F$ defines a pointed morphism $\mu_v : BG_{F_v} \rightarrow B^4A(1)$ over $\Spec(F_v)$. In particular, we find central extensions $\tilde G_v$ of $G(F_v)^0$, where the notation $G(F_v)^0$ stands for $G(F_v)$ for $v$ nonarchimedean or complex, and for the kernel of \eqref{eq-classical-metaplectic-construction-real-obstruction} for $v$ real.
\end{void}

\begin{void}
In this section, we shall combine these $\tilde G_v$ into a central extension of the ad\`elic points of $G$. More precisely, we functorially attach to $\mu$ a central extension:
\begin{equation}
\label{eq-classical-metaplectic-cover-global}
1 \rightarrow H_0(F, A) \rightarrow \tilde G \rightarrow G(\mathbb A_F)^0 \rightarrow 1,
\end{equation}
equipped with a canonical splitting over $G(F)^0$.

Here, $G(\mathbb A_F)^0 \subset G(\mathbb A_F)$ (resp.~$G(F)^0\subset G(F)$) denotes the subgroup of elements whose restriction along each real place $v$ lies in $G(F_v)^0$.
\end{void}

\begin{void}
Write $\cal O_F\subset F$ for the ring of integers. Fix a finite nonempty collection of places $v\in F$ including all the archimedean ones. Its complement may be viewed as an open subscheme $V\subset\Spec(\cal O_F)$.

We shall assume that $A$ extends to a locally constant \'etale sheaf over $V$, with order invertible on $V$. Such an extension is unique, as the morphism from $\Gal(\bar F/F)$ to the \'etale fundamental group of $V$ is surjective, see \cite[0BSD]{stacks-project}.

There is an exact sequence:
\begin{equation}
\label{eq-second-cohomology-global-field}
	H^2(V, A(1)) \rightarrow \bigoplus_{v\notin V} H^2(F_v, A(1)) \rightarrow H_0(F, A) \rightarrow 0,
\end{equation}
where the first map is the restriction, and the second map is the sum of local duality maps, composed with the projections $H_0(F_v, A) \twoheadrightarrow H_0(F, A)$. This follows from \cite[Theorem 3.1]{MR0175892}, in view of the identification $H^0(V, A^*) \cong H^0(F, A^*)$.

The same Theorem yields the calculation of $H^3(V, A(1))$, i.e.~along the restriction maps, we have an isomorphism:
\begin{equation}
\label{eq-third-cohomology-global-field}
H^3(V, A(1)) \cong \bigoplus_{v\tn{ real}}H^3(F_v, A(1)).
\end{equation}
\end{void}

\begin{void}
To construct \eqref{eq-classical-metaplectic-cover-global}, we need to extend the affine group scheme $G$ and the pointed morphism $\mu$ to an open subscheme of $\Spec(\cal O_F)$.

To address the canonicity of the construction, we consider the filtered category of triples $(U, G_U, \gamma)$ where $U\subset \Spec(\cal O_F)$ is an open subscheme where $A$ extends as a locally constant sheaf, $G_U \rightarrow U$ is a affine group scheme of finite type, and $\gamma$ is an isomorphism $G \cong (G_U)\times_U\Spec(F)$ of affine group schemes over $\Spec(F)$.
\end{void}

\begin{lem}
\label{lem-integral-model}
The functor defined by restriction along $\Spec(F)\rightarrow U$:
$$
\colim_{(U, G_U, \gamma)} \Gamma_e(BG_U, B^4A(1)) \rightarrow \Gamma_e(BG, B^4A(1))
$$
is an equivalence.
\end{lem}
\begin{proof}
The existence of an integral model implies that $G$ is identified with $\lim_{(U, G_U, \gamma)}G_U$. The assertion of the Lemma, when $BG$ is replaced by $G^n$ (for $[n]\in\Delta^{\mathrm{op}}$), follows from the local finite presentation of the stack of \'etale local systems (\cite[0GL2]{stacks-project}).

Since $B^4A(1)$ is $4$-truncated, the space of sections $\Gamma_e(BG, B^4A(1))$ is computed by a cosimplicial limit over a finite subcategory of $\Delta^{\mathrm{op}}$, so we conclude by the commutation of filtered colimits with finite limits.
\end{proof}

\begin{void}
Using Lemma \ref{lem-integral-model}, it suffices to construct a functor from $\Gamma_e(BG_U, B^4A(1))$ to the groupoid of central extensions \eqref{eq-classical-metaplectic-cover-global} equipped with a splitting over $G(F)^0$, which is moreover natural in the triplet $(U, G_U, \gamma)$.

Indeed, we consider a pointed morphism $\mu_U : BG_U \rightarrow B^4A(1)$. For any open subscheme $V\subset U$, we obtain a central extension of the topological group $\prod_{v\notin V} G(F_v)^0 \times \prod_{v\in V}G(\cal O_v)$ by taking the product of $\tilde G_v$ over $v\notin V$:
\begin{align*}
0 \rightarrow \bigoplus_{v\notin V} H^2(F_v, A(1)) &\rightarrow \prod_{v\notin V}\tilde G_v \times \prod_{v\in V}G_U(\cal O_v) \\
&\rightarrow \prod_{v\notin V}G(F_v)^0 \times \prod_{v\in V}G_U(\cal O_v) \rightarrow 1.
\end{align*}

Taking the push-out along the second map in \eqref{eq-second-cohomology-global-field}, we obtain a central extension $\tilde G_{U, V}$ of $\prod_{v\notin V}G(F_v)^0\times \prod_{v\in V}G_U(\cal O_v)$ by $H_0(F, A)$.

This central extension is canonically split over $G_U(V)^0$, the kernel of the map $G_U(V) \rightarrow H^3(V, A(1))$. Indeed, $G_U(V)^0$ equals the subgroup whose restriction to the real places belong to $G(F_v)^0$, using the commutative diagram:
$$
\begin{tikzcd}[column sep = 1em]
	G_U(V) \ar[r]\ar[d] & H^3(V, A(1)) \ar[d, "\cong"] \\
	\prod_{v\tn{ real}}G(F_v) \ar[r] & \bigoplus_{v\tn{ real}}H^3(F_v, A(1))
\end{tikzcd}
$$
where the isomorphism is \eqref{eq-third-cohomology-global-field}. Furthermore, the composition of the first two maps in \eqref{eq-second-cohomology-global-field} vanishes, so the central extension of $G_U(V)^0$ by $H^2(V, A(1))$ constructed from the recipe of \S\ref{sect-metaplectic-cover-local} induces the split extension by $H_0(F, A)$.

In summary, we obtain a diagram of topological groups:
\begin{equation}
\label{eq-metaplectic-cover-global-construction-finite}
\begin{tikzcd}[column sep = 1em]
	& &  & G_U(V)^0 \ar[d]\ar[dl] \\
	1 \ar[r] & H_0(F, A) \ar[r] & \tilde G_{U, V} \ar[r] & \prod_{v\notin V}G(F_v)^0 \times \prod_{v\in V}G_U(\cal O_v) \ar[r] & 1
\end{tikzcd}
\end{equation}
The central extension \eqref{eq-classical-metaplectic-cover-global}, together with its splitting over $G(F)^0$, is then obtained by taking colimit of \eqref{eq-metaplectic-cover-global-construction-finite} over $V\subset U$.
\end{void}

\begin{eg}
Suppose that $N\ge 1$ is an integer and $S$ is a $\mathbb Z[\frac{1}{N}]$-scheme. Consider the constant group scheme $G = \SL_2$ over $S$. Viewed as an element of the reduced cohomology, the universal second Chern class $[c_2] \in H^4_e(B(\SL_2), \mu_N^{\otimes 2})$ lifts to a pointed morphism $c_2 : B(\SL_2) \rightarrow B^4(\mu_N^{\otimes 2})$; this follows from the vanishing of the reduced cohomology groups in degrees $\le 3$, see Proposition \ref{prop-reductive-classification} below.

If $S = \Spec(F)$ where $F$ is a local or global field containing a primitive $N$th root of unity, then $c_2$ defines an $N$-fold covering group of $\SL_2(F)$, respectively $\SL_2(\mathbb A_F)$. This is isomorphic to Kubota's central extension \cite{MR204422}.
\end{eg}

\begin{rem}
More generally, classical examples of covering groups correspond to $A$ being the constant sheaf of roots of unity $\mu(F)$ in a local or global field $F$ (or a subgroup thereof). There is an alternative algebraic description of such covers, given by central extensions of $G$ by the Zariski sheaf $\underline K_2$, see \S\ref{sect-relation-with-K2} below.

In the study of genuine representations of covering groups, one typically fixes a coefficient ring, say $\mathbb C$, and an inclusion $\mu(F)\subset\mathbb C^{\times}$. Our perspective is slightly different: without assuming that $F$ contains enough roots of unity, we may still consider \'etale metaplectic covers with values in a constant abelian group $A\subset\mathbb C^{\times}$ of order invertible in $F$. The covering groups arising this way appear in Kaletha's work on Langlands functoriality \cite{kaletha2022}.
\end{rem}

\subsection{Integral metaplectic covers}
\label{sect-relation-with-K2}

\begin{void}
Suppose that $S$ is a smooth scheme over a field $F$. Let $\underline K_2$ denote the Zariski sheafification of the functor $R\mapsto K_2(R)$ sending a commutative ring to its second algebraic K-group. The Picard groupoid of central extensions of sheaves of groups on the big Zariski site of $S$:
\begin{equation}
\label{eq-brylinski-deligne-extension}
1 \rightarrow \underline K_2 \rightarrow E \rightarrow G \rightarrow 1
\end{equation}
is equivalent to $\Gamma_e(BG_{\Zar}, B^2\underline K_2)$, where $BG_{\Zar}$ stands for the stack of Zariski locally trivial $G$-torsors and $B^2\underline K_2$ is the delooping of $\underline K_2$ as a \emph{Zariski sheaf}.

If $S = \Spec(F)$ for a local or global field containing a primitive $N$th root of unity, \cite[\S10]{MR1896177} explains how to attach to \eqref{eq-brylinski-deligne-extension} a topological central extension of $G(F)$ (respectively $G(\mathbb A_F)$) by $\mu_N(F)$.

We shall show that when $G$ is reductive, this process factors through the space of \'etale metaplectic covers of $G$ with values in $A:=\mu_N$.
\end{void}

\begin{void}
Let $F$ be a field and $\Sm_{/F}$ be the category of smooth $F$-schemes. In this context, we shall construct a functor for any reductive group scheme $G\rightarrow S$ and $S\in\Sm_{/F}$:
\begin{equation}
\label{eq-etale-realization-integral-parameter}
\Gamma_e(BG_{\Zar}, B^2\underline K_2) \rightarrow \Gamma_e(BG, B^4\mu_N^{\otimes 2}).
\end{equation}

The Galois symbol defines a morphism from $\underline K_2$ to the Zariski sheafification of the presheaf $R\mapsto H^2_{\et}(R, \mu_N^{\otimes 2})$. However, this map does not lift to a morphism $\underline K_2\rightarrow B^2\mu_N^{\otimes 2}$.

The idea to circumvent this difficulty, due to Gaitsgory \cite[\S6]{MR4117995}, is to interpret the left-hand-side of \eqref{eq-etale-realization-integral-parameter} as sections of the motivic complex $\mathbb Z(2)[4]$ over $BG$, which then admits an \'etale realization given by ``mod $N$''.
\end{void}

\begin{void}
\label{void-motivic-complex}
We temporarily assume that $F$ is \emph{perfect}. For $n\ge 0$, we denote by $\mathbb Z(n)$ the complex of presheaves of abelian groups on $\Sm_{/F}$ defined in \cite[\S3]{MR2242284}.

To briefly recall its construction, we write $\mathbb Z_{\tr}(X)$ (for $X\in\Sm_{/F}$) for the presheaf whose $Y$-points are finite correspondences from $Y$ to $X$, i.e.~the $\mathbb Z$-span of integral subschemes $Y_1\subset Y\times X$ such that $Y_1\rightarrow Y$ is finite and surjective (called an ``elementary correspondence''). Denote by $\mathbb Z_{\tr}(\wedge^n\mathbb G_m)$ the cokernel:
$$
\bigoplus_{1\le i\le n}\mathbb Z_{\tr}(\mathbb G_m \times \cdots \times\{1\}\times \cdots\times\mathbb G_m) \rightarrow \mathbb Z_{\tr}(\mathbb G_m^{\times n}) \rightarrow \mathbb Z_{\tr}(\wedge^n\mathbb G_m) \rightarrow 0.
$$

The inclusion of $\mathbb A^1$-invariant complexes of presheaves into all complexes of presheaves on $\Sm_{/F}$ admits a left adjoint $L_{\mathbb A^1}$, and we set:
$$
\mathbb Z(n) := L_{\mathbb A^1}\mathbb Z_{\tr}(\wedge^n\mathbb G_m)[-n].
$$
Then $\mathbb Z(n)$ is a complex of \'etale sheaves of abelian groups on $\Sm_{/F}$ \cite[Corollary 6.4]{MR2242284}.
\end{void}

\begin{rem}
The complexes $\mathbb Z(n)$ are concentrated in cohomological degrees $\le n$, but are in general not left-bounded if $n\ge 2$.

Clearly $\mathbb Z(0) \cong \mathbb Z$. There is also an equivalence $\mathbb Z(1) \cong \mathbb G_m[-1]$ induced from the natural map $\mathbb Z_{\tr}(\mathbb G_m)\rightarrow \mathbb G_m$ sending an elementary correspondence $Y_1\subset Y\times \mathbb G_m$ to the norm of the corresponding element of $\cal O_{Y_1}^{\times}$ relative to $Y_1\rightarrow Y$, see \cite[\S4]{MR2242284}.
\end{rem}

\begin{void}
\label{void-motivic-complex-to-K-theory}
We shall construct a morphism of complexes of Zariski sheaves on $\Sm_{/F}$:
\begin{equation}
\label{eq-motivic-complex-to-K-theory}
\mathbb Z(n)[n] \rightarrow \underline{K}_n.
\end{equation}

Let us start with a map $\mathbb Z_{\tr}(\mathbb G_m^{\times n}) \rightarrow \underline K_n$ of presheaves. For an elementary correspondence $Y_1 \subset Y\times\mathbb G_m^{\times n}$ corresponding to $f_1, \cdots, f_n \in \cal O_{Y_1}^{\times}$, their norms define $\Nm(f_1), \cdots, \Nm(f_n) \in \cal O_Y^{\times}$ and we form $\Nm(f_1)\otimes\cdots\otimes\Nm(f_n)$ using the product structure $\underline K_1^{\otimes n}\rightarrow\underline K_n$.

This map evidently factors through $\mathbb Z_{\tr}(\wedge^n\mathbb G_m)$. Since $\underline K_n$ is $\mathbb A^1$-invariant on $\Sm_{/F}$ \cite[Corollary 2.5]{MR533201}, we obtain \eqref{eq-motivic-complex-to-K-theory} by adjunction.
\end{void}

\begin{void}
For a smooth affine group scheme $G\rightarrow S$, we write:
\begin{equation}
\label{eq-cochains-on-BG-limit-presentation}
\Gamma(BG, \mathbb Z(n)) := \lim_{[k]\in \Delta^{\mathrm{op}}} \Gamma(G^{\times k}, \mathbb Z(n)),
\end{equation}
as a complex of abelian groups. Here, the sections $\Gamma(G^{\times k}, \mathbb Z(n))$ are computed by treating $\mathbb Z(n)$ as a complex of \emph{\'etale} sheaves.

The corresponding Zariski version will be written as $\Gamma(BG_{\Zar}, \mathbb Z(n))$. There is also the pointed version $\Gamma_e(BG, \mathbb Z(n))$, i.e.~the fiber along $e^* : \Gamma(BG, \mathbb Z(n)) \rightarrow \Gamma(S, \mathbb Z(n))$, and likewise for $\Gamma_e(BG_{\Zar}, \mathbb Z(n))$.
\end{void}

\begin{void}
Comparison of sites and the functor \eqref{eq-motivic-complex-to-K-theory} induce two morphisms of complexes of abelian groups:
\begin{align}
	\label{eq-motivic-parameter-etale-sheafification}
	\Gamma_e(BG_{\Zar}, \mathbb Z(2)[4]) &\rightarrow \Gamma_e(BG, \mathbb Z(2)[4]),\\
	\label{eq-motivic-parameter-to-K2}
	\Gamma_e(BG_{\Zar}, \mathbb Z(2)[4]) &\rightarrow \Gamma_e(BG_{\Zar}, \underline K_2[2]).
\end{align}

The following result shows that when $G\rightarrow S$ is reductive, central extensions of $G$ by $\underline K_2$ are equivalent to weight-$2$ motivic $4$-cocycles on $BG$, computed in either the Zariski or the \'etale topology.

Our statement is more general than its counterparts in \cite[\S6]{MR1460391} and \cite[\S6.3-6.4]{MR4117995}, although the proof is essentially the same.
\end{void}

\begin{thm}
\label{thm-motivic-versus-K2}
Let $F$ be a perfect field. Suppose $S\in\Sm_{/F}$ and $G\rightarrow S$ is a reductive group scheme. Then both morphisms \eqref{eq-motivic-parameter-etale-sheafification}, \eqref{eq-motivic-parameter-to-K2} induce equivalences in degrees $\le 0$.
\end{thm}

\begin{void}
\label{void-motivic-versus-K2-strategy}
Our strategy is to first prove that \eqref{eq-motivic-parameter-etale-sheafification} induces an equivalence in degrees $\le 0$, which will imply that the association $S_1\mapsto \tau^{\le 0}\Gamma_e(B(G_{S_1})_{\Zar}, \mathbb Z(2)[4])$, where $G_{S_1} := G\times_S S_1$, is an \'etale sheaf on smooth $S$-schemes.

Since $S_1\mapsto \tau^{\le 0}\Gamma_e(B(G_{S_1})_{\Zar}, \underline K_2[2])$ also satisfies \'etale descent \cite[\S2]{MR1896177}, the assertion for \eqref{eq-motivic-parameter-to-K2} reduces to the case of a \emph{split} reductive group $G$, for which we will prove an equivalence:
$$
\Gamma_e(BG_{\Zar}, \mathbb Z(2)[4]) \cong \Gamma_e(BG_{\Zar}, \underline K_2[2]),
$$
using the rationality of $G$ provided by the Bruhat decomposition.
\end{void}

\begin{proof}[Proof of Theorem \ref{thm-motivic-versus-K2}]
To prove that \eqref{eq-motivic-parameter-etale-sheafification} induces an equivalence in degrees $\le 0$, it suffices to prove that for any smooth affine group scheme $G$, the analogous map:
\begin{equation}
\label{eq-motivic-cohomology-comparison-group}
\Gamma_e(G_{\Zar}, \mathbb Z(2)) \rightarrow \Gamma_e(G, \mathbb Z(2))
\end{equation}
induces isomorphisms on $H^i$ for $i\le 3$.

Indeed, the presentation \eqref{eq-cochains-on-BG-limit-presentation} implies:
$$
\Gamma_e(BG, \mathbb Z(2)) \cong \lim_{[k]\in\Delta^{\mathrm{op}}}\Gamma_e(G^{\times k}, \mathbb Z(2)),
$$
so we obtain a spectral sequence $E_1^{p,q} = H^p_e(G^{\times q}, \mathbb Z(2)) \Rightarrow H^{p+q}_e(BG, \mathbb Z(2))$ and analogously for the Zariski version $E_{1, \Zar}^{p,q} = H^p_e(G^{\times q}_{\Zar}, \mathbb Z(2))$. Since $E_{1, \Zar}^{4, 0} \cong E_1^{4, 0} \cong 0$, the assertion on \eqref{eq-motivic-cohomology-comparison-group} implies that $E^{p, q}_{1,\Zar} \cong E_1^{p, q}$ for $p\le 3$, which is sufficient to guarantee $H_e^n(BG_{\Zar}, \mathbb Z(2)) \cong H_e^n(BG, \mathbb Z(2))$ for $n\le 4$.

The assertion on \eqref{eq-motivic-cohomology-comparison-group} in turn follows from Kahn's calculation of weight-2 motivic cohomology \cite[Theorem 1.1 \& 1.6]{MR1423901}. Namely, for a connected smooth scheme $X$ over $F$, there hold:
\begin{equation}
\label{eq-kahn-calculation-up-to-three}
H^i(X_{\Zar}, \mathbb Z(2)) \cong H^i(X, \mathbb Z(2)) \cong
\begin{cases}
	0 & i = 0 \\
	K_3(k(X))_{\mathrm{ind}} & i = 1\\
	H^0(X_{\Zar}, \underline K_2) & i = 2\\
	H^1(X_{\Zar}, \underline K_2) & i = 3
\end{cases}
\end{equation}
Here, $K_3(k(X))_{\mathrm{ind}}$ denotes the ``indecomposable part'' of $K_3(k(X))$, i.e.~the quotient of the Quillen $\tn K_3$ by the Milnor $\tn K_3$ of the field $k(X)$. Note that Theorem 1.6 of \emph{op.cit.}~furthermore proves:
\begin{equation}
\label{eq-kahn-calculation-four-and-above}
H^i(X_{\Zar}, \mathbb Z(2)) \cong 0,\quad i\ge 4.
\end{equation}

We now turn to the proof that \eqref{eq-motivic-parameter-to-K2} induces an equivalence in degrees $\le 0$. By what we said in \S\ref{void-motivic-versus-K2-strategy}, it suffices to prove that for a \emph{split} reductive group scheme $G\rightarrow S$, the map $\mathbb Z(2)[2] \rightarrow\underline K_2$ of \eqref{eq-motivic-complex-to-K-theory} induces an equivalence:
$$
\Gamma_e(G_{\Zar}, \mathbb Z(2)[2]) \cong \Gamma_e(G_{\Zar}, \underline K_2).
$$

The calculations \eqref{eq-kahn-calculation-up-to-three}, \eqref{eq-kahn-calculation-four-and-above} imply that the restriction of $\mathbb Z(2)[2] \rightarrow \underline K_2$ to the small Zariski site of $X$ has fiber being the (shifted) constant sheaf $K_3(k(X))_{\mathrm{ind}}[1]$. We thus obtain a morphism of triangles:
\begin{equation}
\label{eq-indecomposable-K3-diagram}
\begin{tikzcd}[column sep = 1.5em]
	K_3(k(G))_{\mathrm{ind}}[1] \ar[r]\ar[d, "e^*"] & \Gamma(G_{\Zar}, \mathbb Z(2)[2]) \ar[r]\ar[d, "e^*"] & \Gamma(G_{\Zar}, \underline K_2) \ar[d, "e^*"] \\
	K_3(k(S))_{\mathrm{ind}}[1] \ar[r] & \Gamma(S_{\Zar}, \mathbb Z(2)[2]) \ar[r] & \Gamma(S_{\Zar}, \underline K_2)
\end{tikzcd}
\end{equation}

It suffices to prove that the leftmost vertical arrow is an isomorphism. Since $G\rightarrow S$ is split reductive, it is birational to $\mathbb A_S^n$ for some $n\ge 0$. Thus it suffices to prove the equivalence:
$$
K_3(k(\mathbb A_S^n)_{\mathrm{ind}}) \cong K_3(k(S))_{\mathrm{ind}}.
$$

For this statement, we may return to \eqref{eq-indecomposable-K3-diagram} with $G = \mathbb A_S^n$, for which the middle and rightmost vertical arrows are equivalences, by the $\mathbb A^1$-invariance of motivic cohomology \cite[Theorem 13.8, Proposition 13.9]{MR2242284} and $\underline K_2$-cohomology \cite[Corollary 2.5]{MR533201}.
\end{proof}

\begin{rem}
Bryslinki--Deligne \cite[Theorem 7.2]{MR1896177} gives a complete description of the groupoid $\Gamma_e(BG_{\Zar}, B^2\underline K_2)$. Theorem \ref{thm-motivic-versus-K2} implies that the same description is valid for the groupoids associated to $\tau^{\le 0}\Gamma_e(BG, \mathbb Z(2)[4])$ and $\tau^{\le 0}\Gamma_e(BG_{\Zar}, \mathbb Z(2)[4])$. In particular, they are $1$-truncated.

If $S = \Spec(F)$, we may choose a maximal torus $T\subset G$ with cocharacter lattice $\Lambda_T$. Then $\Gamma_e(BG_{\Zar}, B^2\underline K_2)$ maps to the abelian group of Weyl and Galois-invariant quadratic forms on $\Lambda_T$, with fiber being $\Gamma(F, \underline{\Hom}(\pi_1G, \mathbb G_m[1]))$, where $\pi_1G$ denotes the algebraic fundamental group of $G$. For semisimple $G$, the corresponding description of $\tau^{\le 0}\Gamma_e(BG, \mathbb Z(2)[4])$ has also been obtained by Merkurjev \cite[Theorem 5.3]{MR3506385}.
\end{rem}

\begin{void}
Let us use Theorem \ref{thm-motivic-versus-K2} to define the functor \eqref{eq-etale-realization-integral-parameter}. Denote by $F_1$ the colimit perfection of $F$, i.e.
$$
F_1 := \colim_{x\mapsto x^p}(F).
$$
Then $\Spec(F_1) \rightarrow \Spec(F)$ is a universal homeomorphism \cite[0CNF]{stacks-project}. Write $S_1$ (resp.~$G_1$) for the base change of $S$ (resp.~$G$) along this map. Topological invariance of the \'etale site \cite[04DZ]{stacks-project} then implies that the pullback functor defines an equivalence:
$$
\Gamma_e(BG, B^4\mu_N^{\otimes 2}) \cong \Gamma_e(BG_1, B^4\mu_N^{\otimes 2}).
$$

Consequently, in order to define \eqref{eq-etale-realization-integral-parameter} we may assume that $F$ is perfect. The desired functor is then given by composing the equivalences of Theorem \ref{thm-motivic-versus-K2} with the identification $\mathbb Z(2)/N \cong \mu_N^{\otimes 2}$ \cite[Theorem 10.3]{MR2242284}:
\begin{align*}
	\Gamma_e(BG, B^2\underline K_2) &\cong \tau^{\le 0}\Gamma_e(BG_{\Zar}, \mathbb Z(2)[4]) \\
	&\cong \tau^{\le 0}\Gamma_e(BG, \mathbb Z(2)[4]) \rightarrow \tau^{\le 0}\Gamma_e(BG, \mathbb Z(2)/N[4]) \cong \Gamma_e(BG, B^4\mu_N^{\otimes 2}).
\end{align*}
\end{void}

\begin{void}
Suppose that $F$ is a local field containing a primitive $N$th root of unity. From any object of $\Gamma_e(BG, B^2\underline K_2)$, one may construct a central extension of topological groups:
\begin{equation}
\label{eq-covering-groups-from-BD}
1 \rightarrow \mu_N(F) \rightarrow \tilde G \rightarrow G(F) \rightarrow 1.
\end{equation}

Namely, viewing this object as a central extension of $G$ by $\underline K_2$, we may evaluate at $F$ and take the pushout along the Hilbert symbol $K_2(F) \rightarrow \mu_N(F)$. The topology on $\tilde G$ is constructed out of the corresponding central extension of $G(F)$ by $H^2(F, \mu_N^{\otimes 2})$ defined by the Galois symbol, see \cite[\S9]{MR1896177}.

Similarly, when $F$ is a global field containing a primitive $N$th root of unity, we obtain a central extension of $G(\mathbb A_F)$ by $\mu_N(F)$ as topological groups, equipped with a canonical splitting over $G(F)$. It is defined as the colimit, over $V\subset U\subset\Spec(\cal O_F)$, of extensions defined for an integral model $G_U$ of $G$ over $U$:
$$
1 \rightarrow \mu_N(F) \rightarrow \tilde G_{U, V} \rightarrow \prod_{v\notin V} G(F_v)\times\prod_{v\in V}G_U(\cal O_v) \rightarrow 1,
$$
which are in turn induced from the local extensions by the sum of the Hilbert symbol maps $\bigoplus_{v\notin V} K_2(F_v) \rightarrow \mu_N(F_v) \cong \mu_N(F)$, see \emph{op.cit.}~for details.

We shall now compare this construction with the ones in \S\ref{sect-metaplectic-cover-local}-\ref{sect-metaplectic-cover-global}.
\end{void}

\begin{prop}
Suppose that $F$ is a local field containing a primitive $N$th root of unity. Let $G$ be a reductive group scheme over $\Spec(F)$. The following diagram is canonically commutative:
\begin{equation}
\label{eq-construction-of-metaplectic-covers-compatible}
\begin{tikzcd}[row sep = 1em, column sep = 0.5em]
	\Gamma_e(BG_{\Zar}, B^2\underline K_2)\ar[dd, swap, "\eqref{eq-etale-realization-integral-parameter}"]\ar[dr] & \\
	&
	\begin{Bmatrix}
	\tn{topological covers of} \\
	\tn{$G(F)$ by $\mu_N(F)$}
	\end{Bmatrix}. \\
	\Gamma_e(BG, B^4\mu_N^{\otimes 2})^0\ar[ur] &
\end{tikzcd}
\end{equation}
where $\Gamma_e(BG, B^4\mu_N^{\otimes 2})^0 := \Gamma_e(BG, B^4\mu_N^{\otimes 2})$ for $F$ nonarchimedean or complex, and is the full subgroupoid for which the induced map $G(F) \rightarrow H^3(F, \mu_N^{\otimes 2})$ \eqref{eq-classical-metaplectic-construction-real-obstruction} vanishes for $F$ real.

The analogous statement holds for global fields $F$ containing a primitive $N$th root of unity, where one replaces the target of \eqref{eq-construction-of-metaplectic-covers-compatible} by topological covers of $G(\mathbb A_F)$ by $\mu_N(F)$, equipped with a splitting over $G(F)$.
\end{prop}
\begin{proof}
The global statement follows from the local one. For the local statement, part of the assertion is that for $F$ real, the image of \eqref{eq-etale-realization-integral-parameter} consists only of pointed morphisms $BG \rightarrow B^4\mu_N^{\otimes 2}$ whose induced map $G(F) \rightarrow H^3(F, \mu_N^{\otimes 2})$ vanishes. By construction, this map factors through $G(F) \rightarrow H^3(F_{\Zar}, \mathbb Z(2))$, which vanishes because $\mathbb Z(2)$ is concentrated in cohomological degrees $\le 2$. (We write $F_{\Zar}$ as a shorthand for $\Spec(F)_{\Zar}$.)

The commutativity of \eqref{eq-construction-of-metaplectic-covers-compatible} unwinds into the commutativity of the square below:
$$
\begin{tikzcd}
	H^2(F_{\Zar}, \mathbb Z(2)) \ar[r, "\eqref{eq-motivic-complex-to-K-theory}"]\ar[d] & K_2(F)\ar[d, "\tn{Hilbert}"]\ar[dl, swap, "\tn{Galois}"] \\
	H^2(F, \mu_N^{\otimes 2}) \ar[r, "\tn{Tate}"] & \mu_N(F)
\end{tikzcd}
$$
This square is divided into two commutive triangles: the top arrow reduces to the map $\theta$ of \cite[\S5]{MR2242284} which defines the ``inverse'' $H^2(F, \mu_N^{\otimes 2}) \rightarrow K_2(F)/N$ of the Galois symbol. On the other hand, the Galois symbol, followed by the Tate duality map, coincides with the $N$th Hilbert symbol.
\end{proof}

\medskip

\section{$\{a, a\} \cong \{a, -1\}$}
\label{sec-cup-product}

From this section until \S\ref{sec-classification}, our goal is to describe the groupoid $\Gamma_e(BG, B^4A(1))$ in explicit terms. This will rely on the description for a torus $T\rightarrow S$, which in turn requires us to study \'etale cochains on $\mathbb G_m$.

In this section, we construct a canonical isomorphism (Theorem \ref{thm-self-cup-product}) between two \'etale $2$-cocycles on $\mathbb G_m$ rigidified along $e : S\rightarrow \mathbb G_m$: the first one is the self-cup product $\Psi\cup\Psi$ of the Kummer $1$-cocycle $\Psi : \mathbb G_m \rightarrow B(\mu_N)$, and the second one is the Yoneda product of $\Psi$ with $\Psi(-1)$, a distinguished $1$-cocycle on $S$.

The construction we give is surprisingly subtle, and the author would like to know if there is a more direct way of obtaining this isomorphism.

\begin{void}
Throughout this section, we fix an integer $N\ge 1$ and a $\mathbb Z[\frac{1}{N}]$-scheme $S$.
\end{void}

\subsection{$2$-cocycles on $\mathbb G_m$}

\begin{void}
We view $\mathbb G_m$ as a constant group scheme over $S$ with structure map $p : \mathbb G_m \rightarrow S$. Let $\underline{\Gamma}{}_e(\mathbb G_m, B^2\mu_N^{\otimes 2})$ denote the \'etale sheaf of groupoids over $S$ whose sections over $S_1 \rightarrow S$ consist of sections of $B^2(\mu_N^{\otimes 2})$ over $\mathbb G_{m, S_1} := \mathbb G_m\times_S S_1$ rigidified along $e : S_1 \rightarrow \mathbb G_{m, S_1}$.

Let $\Psi : \mathbb G_m \rightarrow B(\mu_N)$ denote the Kummer torsor. The Yoneda product with $\Psi$ defines a $\mathbb Z/N$-linear morphism:
\begin{align}
\notag
\Psi^* : B(\mu_N) &\rightarrow \underline{\Maps}{}_{\mathbb Z}(\mathbb G_m, B^2\mu_N^{\otimes 2}) \\
\label{eq-yoneda-product-with-kummer}
& \rightarrow \underline{\Gamma}{}_e(\mathbb G_m, B^2\mu_N^{\otimes 2})
\end{align}
where the second functor is the forgetful one.
\end{void}

\begin{rem}
The sheaf of $\mathbb Z$-linear maps $\mathbb G_m\rightarrow B^2\mu_N^{\otimes 2}$ is identified with the connective truncation of the internal Hom of complexes $\underline{\Hom}{}_{\mathbb Z}(\mathbb G_m, \mu_N^{\otimes 2}[2])$, although we shall see in the proof of Lemma \ref{lem-cochains-on-Gm-equivalences} that the latter is already connective.
\end{rem}

\begin{lem}
\label{lem-cochains-on-Gm-equivalences}
Both functors in \eqref{eq-yoneda-product-with-kummer} are equivalences.
\end{lem}
\begin{proof}
To see that the first functor is an equivalence, it suffices to prove the analogous statement for $\Psi^* : B(\mathbb Z/N) \rightarrow \underline{\Maps}{}_{\mathbb Z}(\mathbb G_m, B^2\mu_N)$. This follows at once after replacing $\mu_N$ in the target by the complex $\mathbb G_m\xrightarrow{N} \mathbb G_m$.

The sheaf of $\mathbb Z/N$-module spectra $\underline{\Gamma}{}_e(\mathbb G_m, B^2\mu_N^{\otimes 2})$ corresponds to the following complex of sheaves of $\mathbb Z/N$-modules on $S$, in the sense of \S\ref{sect-complexes}:
$$
R\tilde p_*(\mu_N^{\otimes 2}[2]) := \Fib(Rp_*(\mu_N^{\otimes 2}[2]) \rightarrow \mu_N^{\otimes 2}[2]).
$$
It suffices to prove that the map $\mu_N[1] \rightarrow R\tilde p_*(\mu_N^{\otimes 2}[2])$ induced from $\Psi$ is an isomorphism of complexes.

Since the \'etale cohomology of $\mathbb G_m\rightarrow S$ commutes with arbitrary base change (\cite[Rappel 1.5.1]{MR1441006}), we may replace $S$ by the spectrum of a separably closed field $\bar s$, where:
$$
H^i(\mathbb G_{m,\bar s}, \mu_N^{\otimes 2}) \cong
\begin{cases}
	\mu_N & i = 1 \\
	0 & i\ge 2
\end{cases}
$$
and the identification of $H^1$ is indeed induced from $\Psi$.
\end{proof}

\begin{void}
\label{void-two-torsion-element}
Let us consider the self-cup product of $\Psi$:
\begin{equation}
\label{eq-self-cup-product-kummer}
\Psi \cup \Psi : \mathbb G_m \rightarrow B^2(\mu_N^{\otimes 2}),
\end{equation}
which may be viewed as a section of $\underline{\Gamma}{}_e(\mathbb G_m, B^2\mu_N^{\otimes 2})$. Anti-symmetry of the cup product equips $\Psi\cup\Psi$ with a $2$-torsion structure, i.e.~a trivialization $2\cdot (\Psi\cup\Psi)\cong *$.

Under the equivalences of Lemma \ref{lem-cochains-on-Gm-equivalences}, $\Psi\cup\Psi$ corresponds to a section of $B(\mu_N)$ over $S$. Our goal is to determine this section.

To state the answer, we consider the section $\Psi(-1)$ of $B(\mu_N)$. It has a natural $2$-torsion structure: linearity of $\Psi$ yields $2\cdot\Psi(-1) \cong \Psi(1)$ and $1$ admits an $N$th root $1^N = 1$. Of course, if $N$ is odd, then any $2$-torsion section of $B(\mu_N)$ is canonically trivial.
\end{void}

\begin{thm}
\label{thm-self-cup-product}
There is a canonical isomorphism:
\begin{equation}
\label{eq-self-cup-product-isomorphism}
\Psi \cup \Psi \cong \Psi^*(\Psi(-1))
\end{equation}
in $\underline{\Gamma}{}_e(\mathbb G_m, B^2\mu_N^{\otimes 2})$ compatible with the $2$-torsion structures.
\end{thm}

\begin{rem}
The isomorphism \eqref{eq-self-cup-product-isomorphism} is an analogue of an equality in the second algebraic $\tn K$-group. Indeed, suppose that $S = \Spec(R)$. Given a section of $\mathbb G_m$ over $S$, represented by $a\in R^{\times}$, there holds $\{a, a\} = \{a, -1\}$ in $\tn K_2(R)$.

The situation in \'etale cohomology is somewhat different. Because $\underline{\Maps}{}_{\mathbb Z}(\mathbb G_m, B^2\mu_N^{\otimes 2})$ has vanishing $\pi_0$, any two sections of it are \emph{non-canonically} isomorphic locally on $S$. However, to ensure good descent properties later on, we need to exhibit a canonical isomorphism \eqref{eq-self-cup-product-isomorphism}.

The subtlety in constructing \eqref{eq-self-cup-product-isomorphism} has to do with the fact that the number $(-1)$ comes \emph{a priori} from the additive structure. We will instead make $\Psi(-1)$ appear as $\Psi$ applied to the product of all $N$th roots of unity, which exist \'etale locally on $S$.
\end{rem}

\subsection{Self-cup product}
\label{sect-self-cup-product}

\begin{void}
Let us first explain the construction of \eqref{eq-self-cup-product-kummer} in more details. In particular, it will help to directly realize $\Psi\cup\Psi$ as a $\mathbb Z$-linear morphism $\mathbb G_m\rightarrow B^2\mu_N^{\otimes 2}$.

Let $A$ be a sheaf of abelian groups on a site $\cal C$. Write $H^{(1)}(A)$ for the extension of sheaves of abelian groups:
\begin{equation}
\label{eq-heisenberg-extension-abelian-group}
0 \rightarrow \Sym^2(A) \rightarrow H^{(1)}(A) \rightarrow A\rightarrow 0,
\end{equation}
defined by the cocycle $a_1, a_2\mapsto a_1a_2$. Namely, there is a given set-theoretic splitting $A\rightarrow H^{(1)}(A)$, under which the product of the images of $a_1,a_2\in A$ in $H^{(1)}(A)$ is the $(a_1a_2)$-multiple of the image of $a_1+a_2 \in A$.
\end{void}

\begin{void}
The relevant case for us is $A = \mu_N$ on the \'etale site of $S$. There holds $\mu_N^{\otimes 2}\cong \Sym^2(\mu_N)$, so \eqref{eq-heisenberg-extension-abelian-group} defines a $\mathbb Z$-linear coboundary map:
\begin{equation}
\label{eq-heisenberg-coboundary-muN}
H^{(1)}(\mu_N) : \mu_N \rightarrow B(\mu_N^{\otimes 2}).
\end{equation}

We write $\Psi\cup\Psi := \Psi^*H^{(1)}(\mu_N)$ for the composition of $\Psi$ with \eqref{eq-heisenberg-coboundary-muN}: it is naturally a section of $\underline{\Maps}{}_{\mathbb Z}(\mathbb G_m, B^2\mu_N^{\otimes 2})$.
\end{void}

\begin{rem}
The coboundary of \eqref{eq-heisenberg-extension-abelian-group} indeed encodes cup product on cohomology, i.e.~for any object $c\in\cal C$, the coboundary $B(A) \rightarrow B^2(\Sym^2(A))$ induces the composition of:
$$
H^1(c, A) \rightarrow H^2(c, A^{\otimes 2}),\quad x\mapsto x\cup x
$$
with the natural map $H^2(c, A^{\otimes 2}) \rightarrow H^2(c, \Sym^2(A))$, see \cite[Theorem 2.5]{MR2915483}.
\end{rem}

\begin{void}
\label{void-heisenberg-quadratic-description}
Maps out of $H^{(1)}(A)$ have a pleasant description as ``quadratic functions without constant terms''.

More precisely, let $A_1$ be a sheaf of abelian groups on $\cal C$. Write $\check H^{(1)}(A, A_1)$ for the sheaf of abelian groups whose sections are maps $Q : A \rightarrow A_1$ satisfying:
\begin{enumerate}
	\item $Q(0) = 0$;
	\item $a_1,a_2\mapsto Q(a_1+a_2) - Q(a_1) - Q(a_2)$ defines a symmetric bilinear form on $A$: this is the symmetric form associated to $Q$.
\end{enumerate}

Then there is a canonical isomorphism defined by restriction along the set-theoretic section $A\subset H^{(1)}(A)$:
$$
\underline{\Maps}{}_{\mathbb Z}(H^{(1)}(A), A_1) \cong \check H^{(1)}(A, A_1).
$$
Under this bijection, restriction along $\Sym^2(A)\subset H^{(1)}(A)$ corresponds to taking the \emph{negative} of the symmetric form associated to a quadratic function $Q : A\rightarrow A_1$.
\end{void}

\begin{void}
\label{void-heisenberg-2-torsion}
The $2$-torsion structure of self-cup product is encoded by a $\mathbb Z$-linear trivialization of the map:
$$
2\cdot H^{(1)}(A) : A \rightarrow B(\Sym^2(A)),
$$
which is in turn defined by the quadratic function $A\rightarrow \Sym^2(A)$, $a\mapsto -a^2$.

In particular, we obtain a trivialization of $2\cdot\Psi\cup\Psi$ as a section of $\underline{\Maps}{}_{\mathbb Z}(\mathbb G_m, B^2\mu_N^{\otimes 2})$.
\end{void}

\subsection{The case $A = \mu_N$}

\begin{void}
Let us now specialize \eqref{eq-heisenberg-extension-abelian-group} to the sheaf $A := \mu_N$ on the \'etale site of $S$. For any homomorphism $\lambda : \mu_N^{\otimes 2} \rightarrow \mathbb G_m$ (necessarily valued in $\mu_N\subset\mathbb G_m$), we may induce \eqref{eq-heisenberg-extension-abelian-group} along $\lambda$ to obtain a section $\lambda_*H^{(1)}(\mu_N) \in \underline{\Maps}{}_{\mathbb Z}(\mu_N, B\mu_N)$.

Resolving the target $B(\mu_N)$ by the Kummer sequence yields a split triangle of sheaves of $\mathbb Z$-module spectra:
\begin{equation}
\label{eq-self-extension-of-muN-split-triangle}
\underline{\Maps}{}_{\mathbb Z}(\mu_N, \mathbb G_m) \xrightarrow{\Psi_*} \underline{\Maps}{}_{\mathbb Z}(\mu_N, B\mu_N) \rightarrow \underline{\Maps}{}_{\mathbb Z}(\mu_N, B\mathbb G_m).
\end{equation}
The first and last terms are isomorphic to $\mathbb Z/N$, respectively $B(\mathbb Z/N)$, giving:
\begin{equation}
\label{eq-self-extension-of-muN-splitting}
	\underline{\Maps}{}_{\mathbb Z}(\mu_N, B\mu_N) \cong \mathbb Z/N \oplus B(\mathbb Z/N).
\end{equation}
For a nondegenerate pairing $\lambda$, we shall determine the image of $\lambda_*H^{(1)}(\mu_N)$ under \eqref{eq-self-extension-of-muN-splitting}.
\end{void}

\begin{rem}
The inclusion $B(\mathbb Z/N) \rightarrow \underline{\Maps}{}_{\mathbb Z}(\mu_N, B\mu_N)$ in \eqref{eq-self-extension-of-muN-splitting} may be described as follows: any section of $B(\mathbb Z/N)$ defines a $\mathbb Z/N$-linear map $\mathbb Z/N\rightarrow B(\mathbb Z/N)$ which we may tensor with $\mu_N$.

The projection $\underline{\Maps}{}_{\mathbb Z}(\mu_N, B\mu_N) \rightarrow \mathbb Z/N$ can be identified with taking the isomorphism class and using $\underline{\Ext}^1_{\mathbb Z}(\mu_N, \mu_N) \cong \mathbb Z/N$.
\end{rem}

\begin{rem}
\label{rem-splitting-square-linearity}
The sheaf of $\mathbb Z$-module spectra $\underline{\Maps}{}_{\mathbb Z}(\mu_N, B\mu_N)$ has two $\mathbb Z/N$-module structures, given by multiplication on the source or on the target. Their induced $\mathbb Z/N^2$-module structures are naturally identified. In particular, we may view \eqref{eq-self-extension-of-muN-splitting} unambiguously as an isomorphism of complexes of $\mathbb Z/N^2$-modules.
\end{rem}

\begin{void}
\label{void-roots-of-unity-two-torsion}
Any primitive $N$th root of unity $\zeta \in \cal O_S^{\times}$ determines a nondegenerate pairing $\lambda : \mu_N^{\otimes 2} \rightarrow \mathbb G_m$, satisfying the equality $\lambda(\zeta\otimes\zeta) = \zeta$.

Note that such $\zeta$ satisfies:
$$
\zeta^{\binom{N}{2}} =
\begin{cases}
	1 & N \text{ odd}\\
	-1 & N \text{ even}
\end{cases}.
$$
In particular, $\Psi(\zeta^{\binom{N}{2}}) \cong \Psi(-1)$ as sections of $B(\mu_N)$.

On the other hand, we equip $\Psi(\zeta^{\binom{N}{2}})$ with a ``strange'' $2$-torsion structure: $2\cdot \Psi(\zeta^{\binom{N}{2}}) \cong \Psi(1)$ and we use $\zeta^{-1}$ to trivialize $\Psi(1)$. This is generally distinct from the $2$-torsion structure on $\Psi(-1)$ defined in \S\ref{void-two-torsion-element}.

The reason for introducing this $2$-torsion structure is that it corresponds naturally to the $2$-torsion structure on $H^{(1)}(\mu_N)$, which will play a role in the proof of Proposition \ref{prop-self-cup-product-2-torsion}.
\end{void}

\begin{prop}
\label{prop-heisenberg-muN-calculation}
Let $\zeta \in \cal O_S^{\times}$ be a primitive $N$th root of unity with corresponding nondegenerate pairing $\lambda$. Under the splitting \eqref{eq-self-extension-of-muN-splitting}, $\lambda_*H^{(1)}(\mu_N)$ corresponds to:
\begin{enumerate}
	\item $\binom{N}{2} \in \mathbb Z/N$;
	\item $\lambda_*\Psi(\zeta^{\binom{N}{2}}) \in B(\mathbb Z/N)$, where $\lambda$ is viewed as a map $ \mu_N\cong\mathbb Z/N$.
\end{enumerate}
\end{prop}
\begin{proof}
We shall use the exact sequences obtained by mapping \eqref{eq-heisenberg-extension-abelian-group} (for $A = \mu_N$) into $\mu_N$ and $\mathbb G_m$, as summarized in the diagram below:
\begin{equation}
\label{eq-heisenberg-muN-calculation}
\begin{tikzcd}[column sep = 1em, row sep = 2em]
	 & & \check H^{(1)}(\mu_N, \mu_N) \ar[r]\ar[d] & \underline{\Maps}{}_{\mathbb Z}(\mu_N^{\otimes 2}, \mu_N) \ar[r]\ar[d, "\cong"] & \underline{\Ext}^1_{\mathbb Z}(\mu_N, \mu_N) \\
	1 \ar[r] & \underline{\Maps}{}_{\mathbb Z}(\mu_N, \mathbb G_m) \ar[d, "N"]\ar[r] & \check H^{(1)}(\mu_N, \mathbb G_m) \ar[r]\ar[d, "N"] & \underline{\Maps}{}_{\mathbb Z}(\mu_N^{\otimes 2}, \mathbb G_m)\ar[r]\ar[d, "N"] & 1 \\
	1 \ar[r] & \underline{\Maps}{}_{\mathbb Z}(\mu_N, \mathbb G_m) \ar[r]\ar[d, "\ev_{\zeta}"] & \check H^{(1)}(\mu_N, \mathbb G_m) \ar[r]\ar[d, "\ev_{\zeta}"] & \underline{\Maps}{}_{\mathbb Z}(\mu_N^{\otimes 2}, \mathbb G_m) \ar[r]\ar[d, "{(\ev_{\zeta\otimes\zeta})^{\binom{N}{2}}}"] & 1 \\
	& \mu_N \ar[r] & \mathbb G_m \ar[r, "N"] & \mathbb G_m 
\end{tikzcd}
\end{equation}
Here, we have identified $\underline{\Maps}{}_{\mathbb Z}(H^{(1)}(\mu_N), \mathbb G_m)$ with $\check H^{(1)}(\mu_N, \mathbb G_m)$, the set of quadratic functions $\mu_N \rightarrow \mathbb G_m$, see \S\ref{void-heisenberg-quadratic-description}. Hence the map $\check H^{(1)}(\mu_N, \mathbb G_m) \rightarrow \underline{\Maps}{}_{\mathbb Z}(\mu_N^{\otimes 2}, \mathbb G_m)$ carries $Q$ to the bilinear pairing $b(a_1\otimes a_2) := Q(a_1)Q(a_2)Q(a_1a_2)^{-1}$.

The bottom vertical arrows in \eqref{eq-heisenberg-muN-calculation} are given by evaluating a linear (resp.~quadratic) map $\mu_n\rightarrow \mathbb G_m$ at $\zeta\in\mu_N$, see \S\ref{void-heisenberg-quadratic-description}. The lower right square commutes because of the binomial theorem:
\begin{equation}
\label{eq-binomial-theorem}
Q(a)^k = b(a\otimes a)^{\binom{k}{2}}Q(a^k).
\end{equation}

The extension class of $\lambda_*H^{(1)}(\mu_N)$ is the image of $\lambda\in \underline{\Maps}{}_{\mathbb Z}(\mu_N^{\otimes 2}, \mu_N)$ in $\underline{\Ext}^1_{\mathbb Z}(\mu_N, \mu_N)$. The isomorphism $\underline{\Ext}^1_{\mathbb Z}(\mu_N, \mu_N) \cong \mathbb Z/N$ is induced from the connecting map of the Snake Lemma applied to the two middle rows of \eqref{eq-heisenberg-muN-calculation}, so the extension class of $\lambda$ is computed by choosing any quadratic function $Q : \mu_N\rightarrow\mathbb G_m$ lifting $\lambda$ and taking $Q^N$: a linear form $\zeta\mapsto \lambda(\zeta\otimes\zeta)^{\binom{N}{2}} = \zeta^{\binom{N}{2}}$ according to \eqref{eq-binomial-theorem}. The expression (1) follows.

The section of $B(\mathbb Z/N)$ corresponding to $\lambda_*H^{(1)}(\mu_N)$ is equivalently described as the $\underline{\Maps}{}_{\mathbb Z}(\mu_N, \mathbb G_m)$-torsors of the quadratic lifts of $\lambda$, viewed as a $\mathbb G_m$-valued symmetric form. Under $\ev_{\zeta}$, it induces the $\mu_N$-torsor $\Psi(\zeta^{\binom{N}{2}})$ by the commutativity of the two bottom rows in \eqref{eq-heisenberg-muN-calculation}. Since the composition below is the identity:
$$
\underline{\Maps}{}_{\mathbb Z}(\mu_N, \mathbb G_m) \xrightarrow{\ev_{\zeta}} \mu_N \xrightarrow{\lambda} \underline{\Maps}{}_{\mathbb Z}(\mu_N, \mathbb G_m),
$$
expression (2) follows.
\end{proof}

\begin{void}
We are now ready to construct an isomorphism in $\underline{\Maps}{}_{\mathbb Z}(\mathbb G_m, B\mu_N)$:
\begin{equation}
\label{eq-self-cup-product-isomorphism-primitive}
T_{\zeta} : \lambda_*(\Psi \cup \Psi) \cong \Psi^*\lambda_*\Psi(-1),
\end{equation}
upon choosing a primitive $N$th root of unity $\zeta$ (with corresponding pairing $\lambda$).
\end{void}

\begin{proof}[Construction]
Recall the isomorphism $\Psi\cup\Psi\cong\Psi^*H^{(1)}(\mu_N)$, see \S\ref{sect-self-cup-product}. Hence $T_{\zeta}$ may be produced from the two isomorphisms below, taking place in $\underline{\Maps}{}_{\mathbb Z}(\mu_N, B\mu_N)$ respectively $\underline{\Maps}{}_{\mathbb Z}(\mathbb G_m, B^2\mu_N)$:
\begin{align}
\label{eq-self-cup-product-difference-primitive}
	\lambda_*H^{(1)}(\mu_N) - \lambda_*\Psi(-1) &\cong \Psi_*\binom{N}{2}, \\
\label{eq-null-homotopy-bockstein-term}
	\Psi^*\Psi_*\binom{N}{2} &\cong *.
\end{align}
(In \eqref{eq-self-cup-product-difference-primitive}, $\lambda_*\Psi(-1) \in B(\mathbb Z/N)$ is understood as its image under the inclusion in \eqref{eq-self-extension-of-muN-splitting}.)

The first isomorphism \eqref{eq-self-cup-product-difference-primitive} follows from Proposition \ref{prop-heisenberg-muN-calculation} and the isomorphism $\Psi(\zeta^{\binom{N}{2}}) \cong \Psi(-1)$, see \S\ref{void-roots-of-unity-two-torsion}.

To define \eqref{eq-null-homotopy-bockstein-term}, we trivialize the following composition of $\mathbb Z/N^2$-linear maps:
\begin{equation}
\label{eq-null-homotopy-bockstein-term-diagram}
\mathbb Z/N^2 \twoheadrightarrow \mathbb Z/N \xrightarrow{\Psi_*} \underline{\Maps}{}_{\mathbb Z}(\mu_N, B\mu_N) \rightarrow \underline{\Maps}{}_{\mathbb Z}(\mu_{N^2}, B\mu_N).
\end{equation}
Indeed, by adjunction it suffices to trivialize the image of $1\in\mathbb Z/N^2$. Under the composition \eqref{eq-null-homotopy-bockstein-term-diagram}, $1\in\mathbb Z/N^2$ maps to the extension $\mu_N\rightarrow \mu_{N^2} \rightarrow \mu_N$ induced along $\mu_{N^2}\twoheadrightarrow \mu_N$; it naturally splits. Since the element $\binom{N}{2} \in \mathbb Z/N$ lifts along the surjection $\mathbb Z/N^2 \twoheadrightarrow \mathbb Z/N$, the image of $\Psi_*\binom{N}{2}$ in $\underline{\Maps}{}_{\mathbb Z}(\mu_{N^2}, B\mu_N)$ is thus trivialized.

Finally, we observe that the map:
$$
\Psi^* : \underline{\Maps}{}_{\mathbb Z}(\mu_N, B\mu_N) \rightarrow \underline{\Maps}{}_{\mathbb Z}(\mathbb G_m, B\mu_N)
$$
factors through the last map in \eqref{eq-null-homotopy-bockstein-term-diagram}. This defines the isomorphism \eqref{eq-null-homotopy-bockstein-term}.
\end{proof}

\begin{rem}
We emphasize that $\lambda_*H^{(1)}(\mu_N)$ is \emph{not} isomorphic to $\lambda_*\Psi(-1)$, i.e.~the isomorphism $T_{\zeta}$ \eqref{eq-self-cup-product-isomorphism-primitive} only exists after applying $\Psi^*$ to both sides.
\end{rem}

\subsection{Properties of $T_{\zeta}$}

\begin{void}
After choosing a primitive $N$th root of unity $\zeta$ with corresponding nondegenerate pairing $\lambda : \mu_N^{\otimes 2} \rightarrow \mathbb G_m$, we have constructed an isomorphism $T_{\zeta}$ \eqref{eq-self-cup-product-isomorphism-primitive} of two sections of $\underline{\Maps}{}_{\mathbb Z}(\mathbb G_m, B^2\mu_N)$.

Inducing along the inverse of $\lambda$, i.e.~the mapping $\mu_N\cong\mu_N^{\otimes 2}$, $\zeta\mapsto\zeta\otimes\zeta$, we obtain an isomorphism:
\begin{equation}
\label{eq-self-cup-product-isomorphism-primitive-dependent}
\lambda^{-1}T_{\zeta} : \Psi\cup\Psi \cong \Psi^*(\Psi(-1)).
\end{equation}

This is our candidate for \eqref{eq-self-cup-product-isomorphism}, meaningful over the base scheme $S_1 := \mu_{N, S}$. In order to show that it descends to $S$ and satisfies the requirement of Theorem \ref{thm-self-cup-product}, we need to prove that it is compatible with the $2$-torsion structures and is independent of the choice of $\zeta$.

These properties are established below.
\end{void}

\begin{prop}
\label{prop-self-cup-product-2-torsion}
The isomorphism \eqref{eq-self-cup-product-isomorphism-primitive-dependent} relates the $2$-torsion structure on $\Psi\cup\Psi$ to the $2$-torsion structure on $\Psi(-1)$.
\end{prop}
\begin{proof}
It suffices to prove that \eqref{eq-self-cup-product-isomorphism-primitive} has this property. An isomorphism $f : a_1 \cong a_2$ of objects equipped with $2$-torson structures in a Picard groupoid $A$ defines an element in $\pi_1(A)$: the composition $* \cong 2\cdot a_1 \xrightarrow{2f} 2\cdot a_2 \cong *$. Let us call it the ``2-torsion error'' of $f$. It vanishes if and only if $f$ is compatible with the $2$-torsion structures.

Recall that \eqref{eq-self-cup-product-isomorphism-primitive} is defined by the composition of isomorphisms in the Picard groupoid $A:= \underline{\Maps}{}_{\mathbb Z}(\mathbb G_m, B^2\mu_N)$:
\begin{equation}
\label{eq-self-cup-product-isomorphism-primitive-expansion}
\Psi^*(\lambda_*H^{(1)}(\mu_N) - \lambda_*\Psi(-1)) \cong \Psi^*\Psi_*\binom{N}{2} \cong *.
\end{equation}
Under the isomorphism $\pi_1(A) \cong \mathbb Z/N$, we shall prove that the first isomorphism has $2$-torsion error $-1$ and the second isomorphism has $2$-torsion error $1$.

The first isomorphism comes from the isomorphism \eqref{eq-self-cup-product-difference-primitive} of sections of $\underline{\Maps}{}_{\mathbb Z}(\mu_N, B\mu_N)$, and it suffices to calculate the $2$-torsion error there. To do so, we first argue that the isomorphism:
\begin{equation}
\label{eq-isomorphism-two-torsion-calculation-first}
\lambda_*H^{(1)}(\mu_N) - \lambda_*\Psi(\zeta^{\binom{N}{2}}) \cong \Psi_*\binom{N}{2},
\end{equation}
where $\Psi(\zeta^{\binom{N}{2}})$ is equipped with the $2$-torsion structure defined using $\zeta^{-1}$ (see \S\ref{void-roots-of-unity-two-torsion}), is \emph{compatible} with the $2$-torsion structures. The desired claim will follow since the isomorphism $\Psi(-1) \cong \Psi(\zeta^{\binom{N}{2}})$ of sections of $B(\mu_N)$ has $2$-torsion error $\zeta$, so $-\lambda_*\Psi(-1) \cong -\lambda_*\Psi(\zeta^{\binom{N}{2}})$ has $2$-torsion error $-\lambda_*(\zeta) = -1$.

The isomorphism \eqref{eq-isomorphism-two-torsion-calculation-first} comes from the proof of Proposition \ref{prop-heisenberg-muN-calculation}. Its compatibility with $2$-torsion structures will follow once we prove that along the projection of \eqref{eq-self-extension-of-muN-splitting}:
$$
\underline{\Maps}{}_{\mathbb Z}(\mu_N, B\mu_N) \rightarrow B(\mathbb Z/N),
$$
the identification of the image of $\lambda_*H^{(1)}(\mu_N)$ with $\lambda_*\Psi(\zeta^{\binom{N}{2}})$ is compatible with the $2$-torsion structures. Recall that the image of $\lambda_*H^{(1)}(\mu_N)$ is the $\mathbb Z/N$-torsor of quadratic lifts of $\lambda$, viewed as a $\mathbb G_m$-valued symmetric form. Its $2$-torsion structure is given by the canonical quadratic lift of $2\lambda$:
\begin{equation}
\label{eq-quadratic-lift-twice-bilinear-form}
\mu_N \rightarrow \mathbb G_m,\quad a\mapsto \lambda(a\otimes a)^{-1},
\end{equation}
see \S\ref{void-heisenberg-2-torsion}. Under the bijection $Q\mapsto Q(\zeta)$ between quadratic lifts of $2\lambda$ and $N$th roots of $(\zeta^{\binom{N}{2}})^2 = 1$, the form \eqref{eq-quadratic-lift-twice-bilinear-form} passes to $\lambda(\zeta\otimes\zeta)^{-1} = \zeta^{-1}$. The desired conclusion follows.

The second isomorphism in \eqref{eq-self-cup-product-isomorphism-primitive-expansion} comes from the isomorphism \eqref{eq-null-homotopy-bockstein-term}, which already occurs in $\underline{\Maps}{}_{\mathbb Z}(\mu_{N^2}, B\mu_N)$, so we will calculate the $2$-torsion error there instead. More precisely, the image of $\Psi_*\binom{N}{2}$ in $\underline{\Maps}{}_{\mathbb Z}(\mu_{N^2}, B\mu_N)$ is the extension induced along:
$$
\begin{tikzcd}[column sep = 2em]
	& & & \mu_{N^2}\ar[d, twoheadrightarrow] \\
	& & & \mu_N \ar[d, "a\mapsto a^{\binom{N}{2}}"] \\
	1 \ar[r] & \mu_N \ar[r] & \mu_{N^2} \ar[r] & \mu_N \ar[r] & 1
\end{tikzcd}
$$
It is trivialized by the section $\mu_{N^2} \rightarrow \mu_{N^2}$, $a\mapsto a^{\binom{N}{2}}$, using the lift of $\binom{N}{2}$ along $\mathbb Z/N^2 \twoheadrightarrow \mathbb Z/N$. In particular, its square is the trivialization of $2\cdot\Psi_*\binom{N}{2} \cong \Psi_*(2\cdot\binom{N}{2})$ given by the section $\mu_{N^2} \rightarrow \mu_{N^2}$, $a\mapsto a^{2\cdot\binom{N}{2}}$.

On the other hand, the $2$-torsion structure on $\Psi_*\binom{N}{2}$ is defined by the vanishing of $2\cdot\binom{N}{2}$ in $\mathbb Z/N$ and the linearity of $\Psi_*$, so it corresponds to the section $\mu_{N^2} \rightarrow \mu_{N^2}$, $a\mapsto 1$. The $2$-torsion error of the trivialization of $\Psi_*\binom{N}{2}$ in $\underline{\Maps}{}_{\mathbb Z}(\mu_{N^2}, B\mu_N)$ is thus induced from the difference of these two sections:
$$
0 - 2\cdot\binom{N}{2} = N \in \mathbb Z/N^2,
$$
along the map:
$$
\Ker(\mathbb Z/N^2\twoheadrightarrow \mathbb Z/N) \rightarrow \pi_1\underline{\Maps}{}_{\mathbb Z}(\mu_{N^2}, B\mu_N) \cong \mathbb Z/N,
$$
which carries $N$ to $1$.
\end{proof}

\begin{prop}
The isomorphism \eqref{eq-self-cup-product-isomorphism-primitive-dependent} is independent of $\zeta$.
\end{prop}
\begin{proof}
For another primitive $N$th root of unity $\zeta_1$ with corresponding nongenerate pairing $\lambda_1$, we need to show an equality of the isomorphisms \eqref{eq-self-cup-product-isomorphism-primitive-dependent} defined by $\zeta$ and $\zeta_1$.

Let $k \in (\mathbb Z/N)^{\times}$ be the unique element so that $\zeta = \zeta_1^k$, or equvalently $\lambda_1 = \lambda^k$. We need to show that the following diagram in $\underline{\Maps}{}_{\mathbb Z}(\mathbb G_m, B^2\mu_N)$ is commutative:
\begin{equation}
\label{eq-self-cup-product-independence}
\begin{tikzcd}[column sep = 1.5em]
	k_*\lambda_*(\Psi\cup\Psi) \ar[r, "k_*T_{\zeta}"]\ar[d] & \Psi^*k_*\lambda_*\Psi(\zeta^{\binom{N}{2}}) \ar[d] \\
	(\lambda_1)_*(\Psi\cup\Psi) \ar[r, "T_{\zeta_1}"] & \Psi^*(\lambda_1)_*\Psi(\zeta^{\binom{N}{2}})
\end{tikzcd}
\end{equation}
where $k_*$ means inducing along the map $\mu_N \rightarrow \mu_N$, $a\mapsto a^k$.

We shall deduce \eqref{eq-self-cup-product-independence} from the $\mathbb Z/N^2$-linear structure of the splitting \eqref{eq-self-extension-of-muN-splitting}, see Remark \ref{rem-splitting-square-linearity}. Indeed, let $k_1 \in \mathbb Z/N^2$ be a lift of $k$. The operation $k_*$ amounts to multiplication by $k_1$. Thus \eqref{eq-self-cup-product-difference-primitive} yields a commutative diagram:
$$
\begin{tikzcd}[column sep = 1.5em]
	k_1\cdot\lambda_*H^{(1)}(\mu_N) - k_1\cdot\lambda_*\Psi(\zeta^{\binom{N}{2}}) \ar[d, "\cong"]\ar[r, "\cong"] & k_1\cdot\Psi_*\binom{N}{2} \ar[d, "\cong"] \\
	(\lambda_1)_*H^{(1)}(\mu_N) - (\lambda_1)_*\Psi(\zeta_1^{\binom{N}{2}}) \ar[r, "\cong"] & \Psi_*\binom{N}{2}
\end{tikzcd}
$$
To obtain the commutativity of \eqref{eq-self-cup-product-independence}, it remains to prove that the trivialization of $\Psi^*\Psi_*\binom{N}{2}$ is compatible with multiplication by $k_1$. This in turn follows from the $\mathbb Z/N^2$-linearity of the null-homotopy of \eqref{eq-null-homotopy-bockstein-term-diagram}.
\end{proof}

\medskip

\section{Tori}
\label{sec-torus}

The goal of this section is to classify \'etale metaplectic covers of a torus $T\rightarrow S$ in terms of its sheaf of cocharacters $\Lambda$. The main result is Theorem \ref{thm-classification-torus}, which states the answer in terms of (even) $\vartheta$-data with coefficients in $A(-1)$.

The notion of $\vartheta$-data is introduced by Beilinson--Drinfeld \cite[\S3.10]{MR2058353}. It also appeared in Brylinski--Deligne \cite[\S3]{MR1896177}, although this name was not used. The difference between their $\vartheta$-data and ours is that we allow torsion coefficients instead of $\mathbb Z$. In this situation, $\vartheta$-data naturally form a $2$-groupoid, and the usual definition needs to be reformulated in a way that is homotopy coherent. This will be carried out in \S\ref{sect-linear-algebra}-\ref{sect-theta-data}.

The main classification result is contained in \S\ref{sect-classification-tori}, and the two sections which follow \S\ref{sect-torus-covers-defined-by-cocycles}-\ref{sect-monoidal-classification-data} are elaborations on it.

Then in \S\ref{sect-commutative-covers}-\ref{sect-quadratic-structure}, we study \'etale metaplectic covers of $T$ which are ``commutative''. These induce, for example, commutative topological covers of $T(F)$ for a local field $F$. Such covers are used in the definition of the L-group of an \'etale metaplectic cover.

Although the classification Theorem \ref{thm-classification-torus} makes essential use of Theorem \ref{thm-self-cup-product}, the study of the commutative covers in \S\ref{sect-commutative-covers} (hence the definition of the L-group as well) does not rely on Theorem \ref{thm-self-cup-product}.

\subsection{Linear algebra}
\label{sect-linear-algebra}

\begin{void}
Let $\Lambda$ be a locally constant sheaf of finite free $\mathbb Z$-modules on a site $\cal C$. The permutation group $\Sigma_2$ acts on $\Lambda \otimes \Lambda$ by exchanging its factors.

Derived $\Sigma_2$-coinvariants $(\Lambda\otimes \Lambda)_{\Sigma_2}$ are computed by the complex of sheaves in cohomological degrees $\le 0$:
\begin{equation}
\label{eq-exchange-derived-coinvariants}
[\cdots \xrightarrow{\Ant} \Lambda\otimes\Lambda \xrightarrow{\Sym} \Lambda\otimes \Lambda \xrightarrow{\Ant} \Lambda\otimes\Lambda].
\end{equation}
Here, $\Sym$ (resp.~$\Ant$) denotes the (anti-)symmetrizer sending $x_1\otimes x_2$ to $x_1\otimes x_2 + x_2\otimes x_1$ (resp.~$x_1\otimes x_2 - x_2\otimes x_1$).
\end{void}

\begin{void}
Let us calculate the cohomology groups $H^i$ of \eqref{eq-exchange-derived-coinvariants}.

By definition, we have $H^0 \cong \Sym^2(\Lambda)$.

Note that the anti-symmetrizer $\Lambda\otimes\Lambda \rightarrow \Lambda\otimes\Lambda$ has image $\wedge^2(\Lambda)$. In particular, we have a short exact sequence:
\begin{equation}
\label{eq-linear-algebra-exact-sequence}
0 \rightarrow \wedge^2(\Lambda) \rightarrow \Lambda\otimes\Lambda \rightarrow \Sym^2(\Lambda) \rightarrow 0.
\end{equation}

We find $H^{-1} \cong \Lambda/2$, according to a commutative diagram of four additional short exact sequences:
\begin{equation}
\label{eq-linear-algebra-diagram}
\begin{tikzcd}[column sep = 1.5em]
	& 0 \ar[d] & 0\ar[d] \\
	& \Sym^2(\Lambda) \ar[r, "\cong"]\ar[d] & \Sym^2(\Lambda) \ar[d] \\
	0 \ar[r] & \Gamma^2(\Lambda) \ar[r]\ar[d] & \Lambda\otimes\Lambda \ar[r]\ar[d] & \wedge^2(\Lambda)\ar[d, "\cong"]\ar[r] & 0 \\
	0 \ar[r] & \Lambda/2 \ar[r]\ar[d] & \Ant^2(\Lambda)\ar[r]\ar[d] & \wedge^2(\Lambda) \ar[r] & 0 \\
	& 0 & 0
\end{tikzcd}
\end{equation}

Next, $H^{-2} = 0$ again by the exact sequence \eqref{eq-linear-algebra-exact-sequence}. The rest of the negative cohomology groups alternate between $\Lambda/2$ and $0$.
\end{void}

\begin{rem}
Let $\check{\Lambda}$ denote the sheaf of $\mathbb Z$-modules dual to $\Lambda$. Items in \eqref{eq-linear-algebra-diagram} admit descriptions as integral ``forms'' on $\check{\Lambda}$. Bilinear forms are classified by $\Lambda\otimes\Lambda$ in the obvious way, $\wedge^2(\Lambda)$ consists of alternating forms via the inclusion in \eqref{eq-linear-algebra-exact-sequence} and $\Gamma^2(\Lambda)$ consists of symmetric bilinear forms via the inclusion in \eqref{eq-linear-algebra-diagram}.

Next, $\Sym^2(\Lambda)$ consists of quadratic forms $Q$ whose value at $x\in\check{\Lambda}$ is given by $c(x,x)$ for any lift $c\in\Lambda\otimes\Lambda$ along \eqref{eq-linear-algebra-exact-sequence}. Finally, $\Ant^2(\Lambda)$ consists of pairs $(a, f)$ where $a$ is an alternating form on $\check{\Lambda}$ and $f : \check{\Lambda} \rightarrow \mathbb Z/2$ is a function satisfying:
$$
f(x_1 + x_2) - f(x_1) - f(x_2) = a(x_1, x_2) \mod 2.
$$
The maps $\Sym^2(\Lambda) \rightarrow \Gamma^2(\Lambda)$, respectively $\Ant^2(\Lambda)\rightarrow \wedge^2(\Lambda)$ in \eqref{eq-linear-algebra-diagram} encode the following operations: going from a quadratic form $Q$ to its symmetric bilinear form $b$, and going from a pair $(a, f)$ to its alternating form $a$.
\end{rem}

\begin{rem}
\label{rem-coinvariants-against-the-sign-character}
We may also consider the $\Sigma_2$ action on $\Lambda\otimes\Lambda$ twisted by the sign character. The derived coinvariants are computed by a complex similar to \eqref{eq-exchange-derived-coinvariants}, but which starts with $\Sym$ as the differential $d^{-1}$. In particular, we find $\Ant^2(\Lambda)$ in $H^0$. Then we have $H^{-1} = 0$, $H^{-2} \cong \Lambda/2$, and the rest of the $H^{-n}$ alternate between them.

The vanishing of $H^{-1}$ has a practical consequence: given a complex $A$ of sheaves of abelian groups in cohomological degrees $[-1, 0]$, the sheaf of $\Sigma_2$-equivariant maps $\Lambda\otimes\Lambda\rightarrow A$, \emph{a priori} given by $\underline{\Maps}{}_{\mathbb Z}((\Lambda\otimes\Lambda)_{\Sigma_2}, A)$ for the derived $\Sigma_2$-coinvariants, is actually identified with $\underline{\Maps}{}_{\mathbb Z}(\Ant^2(\Lambda), A)$.
\end{rem}

\begin{void}
We shall also meet two kinds of ``quadratic functions'' on $\Lambda$, one of which has already made an appearance in \eqref{sect-self-cup-product}.

Namely, for a sheaf $A$ of abelian groups, we write $H^{(1)}(A)$ for the extension of $A$ by $\Sym^2(A)$ defined by the cocycle $a_1, a_2\mapsto a_1a_2$.

Define $H^{(2)}(A)$ to be the chain complex in cohomological degrees $[-1, 0]$:
$$
H^{(2)}(A) := [A\otimes A \rightarrow H^{(1)}(A)],
$$
where the differential is given by the projection $A\otimes A\twoheadrightarrow\Sym^2(A)$ followed by the natural inclusion of $\Sym^2(A)$ in $H^{(1)}(A)$.
\end{void}

\begin{void}
We shall only need the case $A := \Lambda$, in which case $H^{(1)}(\Lambda)$ is torsion-free. Consider the duals as chain complexes of sheaves of $\mathbb Z$-modules:
\begin{align*}
	\check H^{(1)}(\Lambda) &:= \underline{\Hom}(H^{(1)}(\Lambda), \mathbb Z), \\
	\check H^{(2)}(\Lambda) &:= \underline{\Hom}(H^{(2)}(\Lambda), \mathbb Z).
\end{align*}
The complex $\check H^{(2)}(\Lambda)$ is concentrated in degrees $[0, 1]$.

Recall that $\check H^{(1)}(\Lambda)$ is identified with the abelian group of $\mathbb Z$-valued quadratic functions on $\Lambda$, and restriction along $\Sym^2(\Lambda)\rightarrow H^{(1)}(\Lambda)$ yields the \emph{negative} of the associated symmetric form, see \S\ref{void-heisenberg-quadratic-description} for details.
\end{void}

\begin{void}
Note that the short exact sequence \eqref{eq-linear-algebra-exact-sequence} gives rise to a fiber sequence:
\begin{equation}
\label{eq-alternatrized-second-heisenberg-extension}
\wedge^2(\Lambda)[1] \rightarrow H^{(2)}(\Lambda) \rightarrow \Lambda.
\end{equation}
Let us identify the strictly commutative Picard groupoid associated to $H^{(2)}(\Lambda)$.
\end{void}

\begin{lem}
\label{lem-second-heisenberg-central-extension}
There is a \emph{monoidal} equivalence $H^{(2)}(\Lambda) \cong \Lambda\times B(\wedge^2(\Lambda))$, under which the commutativity constraint on $H^{(2)}(\Lambda)$ corresponds to the isomorphism $\lambda_1 + \lambda_2 \cong \lambda_2 + \lambda_1$ defined by $\lambda_1\wedge\lambda_2 \in \wedge^2(\Lambda)$, for all $\lambda_1,\lambda_2\in\Lambda$ viewed as objects of $H^{(2)}(\Lambda)$.
\end{lem}
\begin{proof}
As a pointed morphism, the section $\Lambda\rightarrow H^{(2)}(\Lambda)$ of \eqref{eq-alternatrized-second-heisenberg-extension} is induced from the natural map $v : H^{(1)}(\Lambda)\rightarrow H^{(2)}(\Lambda)$, restricted along the section $\Lambda\subset H^{(1)}(\Lambda)$. It is equipped with an $\mathbb E_1$-monoidal structure coming from the trivialization of $v$ over $\Lambda\otimes\Lambda$.

The commutativity constraint in $H^{(2)}(\Lambda)$ for two objects $\lambda_1,\lambda_2$ coming from $\Lambda$ is then the automorphism of $\lambda_1 + \lambda_2$ determined by $\lambda_1\otimes\lambda_2 - \lambda_2\otimes\lambda_1$ in the subgroup $\wedge^2(\Lambda)\subset\Lambda\otimes\Lambda$.
\end{proof}

\begin{void}
\label{void-second-heisenberg-central-extension}
By Lemma \ref{lem-second-heisenberg-central-extension}, for any sheaf of abelian groups $A_1$, $\mathbb Z$-linear maps $H^{(2)}(\Lambda) \rightarrow B(A_1)$ correspond to pairs $(a, \Lambda_1)$ where $a : \wedge^2(\Lambda)\rightarrow A_1$ is an alternating form and $\Lambda_1$ is a central extension of $\Lambda$ by $A_1$ with commutator $\lambda_1,\lambda_2\mapsto a(\lambda_1, \lambda_2)$.

Restriction along the first map in \eqref{eq-alternatrized-second-heisenberg-extension} corresponds to the functor $(a, \Lambda_1)\mapsto a$. The central extension $\Lambda_1$ is recovered by the restriction along the \emph{monoidal} section $\Lambda\rightarrow H^{(2)}(\Lambda)$.
\end{void}

\begin{void}
\label{void-cosimplicial-system-character-lattice}
The sheaf of $\mathbb Z$-modules $\check{\Lambda}$ has the natural coalgebra structure with respect to the Cartesian symmetric monoidal structure. It is defined by the cosimplicial object $[n] \mapsto \check{\Lambda}^{\oplus n}$, whose coface maps are insertions and whose degeneracy maps are projections. We display the maps between $\check{\Lambda}$ and $\check{\Lambda}^{\oplus 2}$:
$$
d^2_i(x) =
\begin{cases}
	(0, x) & i = 0\\
	(x, x) & i = 1\\
	(x, 0) & i = 2
\end{cases}
;\quad
s^1_i(x_0, x_1) = 
\begin{cases}
x_1 & i = 0\\
x_0 & i = 1
\end{cases}.
$$

Functoriality of the constructions $\check{\Lambda}\mapsto F(\check{\Lambda}) := \check{\Lambda}\otimes\check{\Lambda}$, $\Gamma^2(\check{\Lambda})$, $\wedge^2(\check{\Lambda})$, $\Sym^2(\check{\Lambda})$, $\Ant^2(\check{\Lambda})$, $\check H^{(1)}(\Lambda)$, $\check H^{(2)}(\Lambda)$ determine cosimplicial objects $[n]\mapsto F(\check{\Lambda}^{\oplus n})$ in the category of complexes of sheaves of $\mathbb Z$-modules.
\end{void}

\begin{lem}
\label{lem-calculation-cosimplicial-limit}
There are canonical isomorphisms of complexes:
\begin{enumerate}
	\item $\lim_{[n]}(\check{\Lambda}^{\oplus n}) \cong \check{\Lambda}[-1]$;
	\item $\lim_{[n]}(\check{\Lambda}^{\oplus n}\otimes\check{\Lambda}^{\oplus n}) \cong \check{\Lambda}\otimes\check{\Lambda}[-2]$;
	\item $\lim_{[n]}F_1(\check{\Lambda}^{\oplus n}) \cong F_2(\check{\Lambda})[-2]$ for all functors notated $F_1\Rightarrow F_2$ below:
	$$
	\Gamma^2 \Rightarrow \wedge^2 \Rightarrow \Sym^2 \Rightarrow \Ant^2;
	$$
	\item $\lim_{[n]}(\check H^{(1)}(\Lambda^{\oplus n})) \cong \check H^{(2)}(\Lambda)[-1]$.
\end{enumerate}
\end{lem}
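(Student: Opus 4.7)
The plan is to pass to the normalized cochain complex of each cosimplicial $\mathbb Z$-module, which by dual Dold--Kan computes the totalization $\lim_{[n]\in\Delta}$. For a cosimplicial object $C^\bullet$, the normalization is $N^n = \bigcap_i \ker(s^n_i)$ equipped with the alternating coface differential $\delta^n = \sum_{i=0}^{n+1}(-1)^i d^{n+1}_i$. Part (1) falls out at once: the codegeneracies on $\check\Lambda^{\oplus n}$ are the coordinate projections, so the intersection of their kernels equals $\check\Lambda$ when $n=1$ and vanishes for $n\ge 2$; the normalized complex is therefore $\check\Lambda$ concentrated in degree $1$, i.e., $\check\Lambda[-1]$. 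Part (2) then follows from (1) by Eilenberg--Zilber: the cosimplicial object $[n]\mapsto\check\Lambda^{\oplus n}\otimes\check\Lambda^{\oplus n}$ is the diagonal of the bicosimplicial object $([m],[n])\mapsto\check\Lambda^{\oplus m}\otimes\check\Lambda^{\oplus n}$, so its totalization computes $\check\Lambda[-1]\otimes\check\Lambda[-1] = (\check\Lambda\otimes\check\Lambda)[-2]$.

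For (3), I would exploit the natural decomposition $F(V_1\oplus V_2) = F(V_1)\oplus(V_1\otimes V_2)\oplus F(V_2)$ shared by $F\in\{\Gamma^2,\wedge^2,\Sym^2\}$, whose middle summand is the bilinear cross-effect. Iterating this decomposition shows that any element of $F(\check\Lambda^{\oplus n})$ for $n\ge 3$ lies in a summand supported on at most two coordinate directions, hence is killed by some projection $s^{n-1}_i$; thus $N^{\ge 3}=0$, $N^1=F(\check\Lambda)$, and $N^2=\check\Lambda\otimes\check\Lambda$. Computing $\delta^1$ using the cofaces $d_0(v)=(0,v)$, $d_1(v)=(v,v)$, $d_2(v)=(v,0)$, the contributions of $d_0$ and $d_2$ vanish on the cross-effect summand, so $\delta^1$ reduces (up to sign) to the canonical polarization map $F(\check\Lambda)\hookrightarrow\check\Lambda\otimes\check\Lambda$. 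This embedding is injective (as $\check\Lambda$ is $\mathbb Z$-free), and inspection against diagram \eqref{eq-linear-algebra-diagram} identifies its cokernel as $\wedge^2(\check\Lambda)$ for $F=\Gamma^2$, $\Sym^2(\check\Lambda)$ for $F=\wedge^2$, and $\Ant^2(\check\Lambda)$ for $F=\Sym^2$, as claimed.

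For (4), my preferred approach is direct normalization followed by comparison. One checks that $N^n = 0$ for $n\ge 3$ (a quadratic function vanishing on all coordinate hyperplanes must vanish identically), $N^1 = \check H^{(1)}(\Lambda)$, and $N^2 = \check\Lambda\otimes\check\Lambda$, with differential $\delta^1(Q)(a,b) = Q(b) - Q(a+b) + Q(a) = -b_Q(a,b)$ where $b_Q$ is the symmetric bilinear form associated to $Q$. This exhibits the totalization as the two-term complex $[\check H^{(1)}(\Lambda)\to\check\Lambda\otimes\check\Lambda]$ in degrees $[1,2]$, whose differential factors as $\check H^{(1)}(\Lambda)\twoheadrightarrow\Gamma^2(\check\Lambda)\hookrightarrow\check\Lambda\otimes\check\Lambda$ with a sign---precisely the description of $\check H^{(2)}(\Lambda)[-1]$ recalled in \S\ref{void-second-heisenberg-extension}, with signs matching the convention of \S\ref{void-heisenberg-quadratic-functions-integral}.

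The hard part will be the sign bookkeeping for $\delta^1$ in (3): the three cases look superficially identical but produce distinct cokernels depending on which component of the polarization map receives the sign. Once the embedding is pinned down in each case, matching the cokernel to $F_2(\check\Lambda)$ is a routine check against the exact sequences in \eqref{eq-linear-algebra-diagram}.
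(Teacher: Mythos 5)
Your proof is correct and follows essentially the same route as the paper: pass to the normalized (nondegenerate) cochain complex, observe it vanishes in degrees $\ge 3$, identify the degree-$2$ term with $\check{\Lambda}\otimes\check{\Lambda}$ and the differential with (minus) the polarization, and conclude from the exact sequences in \eqref{eq-linear-algebra-diagram} and \eqref{eq-linear-algebra-exact-sequence}. The only cosmetic differences are that you handle (2) via Eilenberg--Zilber for the diagonal of a bicosimplicial object where the paper computes nondegenerate cochains directly, and that your cross-effect decomposition $F(V_1\oplus V_2)\cong F(V_1)\oplus(V_1\otimes V_2)\oplus F(V_2)$ makes explicit the ``essential observation'' the paper leaves implicit.
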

\begin{proof}
The cosimplicial limit of $F(\check{\Lambda}^{\oplus n})$ is equivalent to the complex in degrees $\ge 0$:
\begin{equation}
\label{eq-cosimplicial-limit-as-a-complex}
[0 \xrightarrow{d} F(\check{\Lambda}) \xrightarrow{d} F(\check{\Lambda}^{\oplus 2}) \xrightarrow{d} \cdots],
\end{equation}
where $d$ denotes the alternating sum of the face maps. Inclusion of the subcomplex $\bigcap\ker(s_i)$ of nondegenerate cochains is a homotopy equivalence. This already implies (1), since the subcomplex of nondegenerate cochains is equivalent to $\check{\Lambda}[-1]$. For (2), the subcomplex of nondegenerate cochains vanish in degrees $\ge 3$, while in degrees $[1,2]$ we find:
\begin{equation}
\label{eq-calculation-cosimplicial-limit-square}
\check{\Lambda}\otimes\check{\Lambda} \xrightarrow{d} (\check{\Lambda}\otimes\check{\Lambda})\oplus (\check{\Lambda}\otimes\check{\Lambda}),
\end{equation}
where the differential is the diagonal inclusion. Its quotient is then $\check{\Lambda}\otimes\check{\Lambda}$.

For (3) with $F = \Gamma^2$, $\wedge^2$, or $\Sym^2$, the essential observation is that nondegenerate $n$-cochains vanish for $n\ge 3$ and nondegenerate $2$-cochains are equivalent to $\check{\Lambda}\otimes\check{\Lambda}$. The differential it receives from nondegenerate $1$-cochains $d : F(\check{\Lambda}) \rightarrow \check{\Lambda}\otimes\check{\Lambda}$ is the natural inclusion, so we conclude by the short exact sequences in \eqref{eq-linear-algebra-exact-sequence}, \eqref{eq-linear-algebra-diagram} (see also \cite[\S3.9]{MR1896177}).

For (4), we identify elements of $\check H^{(1)}(\Lambda^{\oplus n})$ with quadratic functions $Q^{(n)} : \Lambda^{\oplus n} \rightarrow \mathbb Z$. Thus nondegenerate cochains for $n\ge 3$ vanish and are isomorphic to $\check{\Lambda}\otimes\check{\Lambda}$ for $n = 2$: those $Q^{(2)}$ which vanish on $\Lambda\oplus 0$ and $0\oplus\Lambda$ are uniquely determined by the pairing between $\Lambda\oplus 0$ and $0\oplus\Lambda$, which can be arbitrary. The nondegenerate cochains form the complex in cohomological degrees $[1,2]$:
$$
[\check H^{(1)}(\Lambda) \xrightarrow{d} \check{\Lambda}\otimes\check{\Lambda}],
$$
where $d$ takes $Q^{(1)}$ to the bilinear form $\lambda_1,\lambda_2\mapsto Q^{(1)}(\lambda_1) + Q^{(1)}(\lambda_2) - Q^{(1)}(\lambda_1 + \lambda_2)$ on $\Lambda$. Taking the negative sign into account, this is precisely $\check H^{(2)}(\Lambda)[-1]$.
\end{proof}

\subsection{Even $\vartheta$-data}
\label{sect-theta-data}

\begin{void}
Let $S$ be a scheme, and let $\Lambda$ be a locally constant sheaf of finite free $\mathbb Z$-modules on its \'etale site.

Denote by $\vartheta^{(1)}(\Lambda)$ the sheaf of abelian groups whose sections are pairs $(a, f)$ where $a\in\wedge^2(\check{\Lambda})$ is an integral alternating form on $\Lambda$, and $f : \Lambda \rightarrow \mathbb G_m$ is a function with $f(\lambda_1 + \lambda_2) - f(\lambda_1) - f(\lambda_2) = (-1)^{a(\lambda_1,\lambda_2)}$.

Then $\vartheta^{(1)}(\Lambda)$ is simultaneously a pullback and a pushout:
\begin{equation}
\label{eq-first-theta-data-integral}
\begin{tikzcd}[column sep = 1em]
	\Gamma^2(\check{\Lambda}) \ar[r]\ar[d, swap, "b\mapsto(\lambda\mapsto(-1)^{b(\lambda,\lambda)})"] & \check{\Lambda} \otimes \check{\Lambda}\ar[d] \\
	\check{\Lambda}\otimes\mathbb G_m \ar[r] & \vartheta^{(1)}(\Lambda) \ar[d]\ar[r] & \wedge^2(\check{\Lambda}) \ar[d, "{a\mapsto(\lambda_1,\lambda_2\mapsto(-1)^{a(\lambda_1,\lambda_2)})}"]\\
	& \check H^{(1)}(\Lambda)\otimes\mathbb G_m \ar[r] & \Gamma^2(\check{\Lambda})\otimes\mathbb G_m
\end{tikzcd}
\end{equation}
Here, the morphism $\check{\Lambda}\otimes\check{\Lambda} \rightarrow \vartheta^{(1)}(\Lambda)$ sends a bilinear form $c$ to the pair $(a, f)$, where $a$ is the anti-symmetrization of $c$ and $f$ is the map $\lambda\mapsto(-1)^{c(\lambda,\lambda)}$.

The fact that $\wedge^2(\check{\Lambda})$ is identified with the cokernel of $\Gamma^2(\check{\Lambda})\rightarrow\check{\Lambda}\otimes\check{\Lambda}$ implies that the middle sequence in \eqref{eq-first-theta-data-integral} is exact.
\end{void}

\begin{rem}
Sections of $\vartheta^{(1)}(\Lambda)$ are in canonical bijection with pointed morphisms from $T:=\Lambda\otimes\mathbb G_m$ to $\underline K_2$, when the base scheme $S$ is regular of finite type over a field, see \cite[Construction 3.5]{MR1896177}.
\end{rem}

\begin{void}
We define $\vartheta^{(2)}(\Lambda)$ to be the limit of the cosimplicial diagram $[n] \mapsto B^2(\vartheta^{(1)}(\Lambda^{\oplus n}))$ of sheaves of connective $\mathbb Z$-module spectra, see \S\ref{void-cosimplicial-system-character-lattice}.

Applying the same limit to the diagram \eqref{eq-first-theta-data-integral} and using Lemma \ref{lem-calculation-cosimplicial-limit}, we see that $\vartheta^{(2)}(\Lambda)$ is simultaneously a pullback and a pushout:
\begin{equation}
\label{eq-second-theta-data-integral}
\begin{tikzcd}[column sep = 1em]
	\wedge^2(\check{\Lambda}) \ar[r]\ar[d, "(1)"] & \check{\Lambda}\otimes\check{\Lambda} \ar[d] \\
	\check{\Lambda}\otimes B\mathbb G_m \ar[r] & \vartheta^{(2)}(\Lambda) \ar[r]\ar[d] & \Sym^2(\check{\Lambda}) \ar[d, "(2)"] \\
	& \check H^{(2)}(\Lambda)\otimes B\mathbb G_m \ar[r] & \wedge^2(\check{\Lambda})\otimes\mathbb G_m
\end{tikzcd}
\end{equation}

The arrow labeled (1) is given by the compositions:
$$
\wedge^2(\check{\Lambda}) \xrightarrow{\Ant(\check{\Lambda})} B(\check{\Lambda}/2) \xrightarrow{(-1)}\check{\Lambda}\otimes B\mathbb G_m,
$$
where the first map is the coboundary of the bottom exact sequence of \eqref{eq-linear-algebra-diagram} and the second map is the tensor product of $\check{\Lambda}$ with $\mathbb Z/2\rightarrow\mathbb G_m$, $a\mapsto (-1)^a$.

The arrow labeled (2) is the composition:
\begin{equation}
\label{eq-quadratic-form-to-alternating-pairing-integral}
\Sym^2(\check{\Lambda}) \xrightarrow{Q\mapsto b} \wedge^2(\check{\Lambda})/2 \xrightarrow{(-1)} \wedge^2(\check{\Lambda})\otimes\mathbb G_m,
\end{equation}
where the first map sends $Q$ to its symmetric form $b$, viewed as an \emph{alternating} form valued in $\mathbb Z/2$, and the second map is the tensor product of $\wedge^2(\check{\Lambda})$ with $\mathbb Z/2\rightarrow\mathbb G_m$, $a\mapsto (-1)^a$.

Furthermore, the middle sequence in \eqref{eq-second-theta-data-integral} is canonically a triangle.
\end{void}

\begin{rem}
\label{rem-second-heisenberg-theta-data}
Interpreting sections of $\check H^{(2)}(\Lambda)\otimes B\mathbb G_m$ as central extensions of $\Lambda$ by $\mathbb G_m$ with prescribed commutators (\S\ref{void-second-heisenberg-central-extension}), the pullback square in \eqref{eq-second-theta-data-integral} says that sections of $\vartheta^{(2)}(\Lambda)$ are pairs $(Q, \Lambda_1)$, where $Q$ is a quadratic form on $\Lambda$, and $\Lambda_1$ is a central extension of $\Lambda$ by $\mathbb G_m$ whose commutator is $\lambda_1, \lambda_2 \mapsto (-1)^{b(\lambda_1, \lambda_2)}$, for $b$ being the symmetric form associated to $Q$.

This description shows that sections of $\vartheta^{(2)}(\Lambda)$ recover the ``even $\vartheta$-data'' of Beilinson--Drinfeld \cite[\S3.10.3]{MR2058353} except the terms involving $\omega_X$. They are also equivalent to central extensions of $T := \Lambda\otimes\mathbb G_m$ by $\underline K_2$, when the base scheme $S$ is regular of finite type over a field, see \cite[Theorem 3.16]{MR1896177}.
\end{rem}

\begin{rem}
\label{rem-composition-cocycle-classification-datum}
The composition $\check{\Lambda}\otimes\check{\Lambda} \rightarrow \check H^{(2)}(\Lambda)\otimes B\mathbb G_m$ in \eqref{eq-second-theta-data-integral} can be described as follows: it sends a bilinear form $c$ to the central extension of $\Lambda$ by $\mathbb G_m$ defined by the cocycle $\lambda_1, \lambda_2\mapsto (-1)^{c(\lambda_1,\lambda_2)}$.
\end{rem}

\begin{void}
\label{void-theta-data-explicit-description}
Let $A$ be a locally constant \'etale sheaf of finite abelian groups over $S$ of invertible order. Then we may form $\vartheta^{(1)}(\Lambda)\otimes A(-1)$ and $\vartheta^{(2)}(\Lambda)\otimes A(-1)$. (The Tate twists are introduced to conform with the conventions in \S\ref{sect-classification-tori} below.)

There are obvious analogues of the pullback/pushout diagrams \eqref{eq-first-theta-data-integral}, \eqref{eq-second-theta-data-integral}. In particular, sections of $\vartheta^{(2)}(\Lambda)\otimes A(-1)$ consist of triples $(Q, F, h)$ where:
\begin{enumerate}
	\item $Q$ is an $A(-1)$-valued quadratic form on $\Lambda$;
	\item $F : H^{(2)}(\Lambda) \rightarrow B^2A$ is a $\mathbb Z$-linear morphism;
	\item $h$ is a $\mathbb Z$-linear isomorphism between the restriction of $F$ to $B(\wedge^2(\Lambda))$ and the image of $Q$ along the map coming from tensoring \eqref{eq-quadratic-form-to-alternating-pairing-integral} with $A(-1)$:
	\begin{equation}
	\label{eq-quadratic-form-to-alternating-pairing}
	\Sym^2(\check{\Lambda})\otimes A(-1) \rightarrow \wedge^2(\check{\Lambda})\otimes BA.
	\end{equation}
\end{enumerate}

Furthermore, we have a canonical triangle coming from the middle triangle of \eqref{eq-second-theta-data-integral}:
\begin{equation}
\label{eq-theta-data-triangle-coefficient}
	\check{\Lambda}\otimes B^2A \rightarrow \vartheta^{(2)}(\Lambda)\otimes A(-1) \rightarrow \Sym^2(\check{\Lambda})\otimes A(-1).
\end{equation}
\end{void}

\begin{rem}
If $A(-1)$ has no $2$-torsion, then $Q$ has zero image in $\wedge^2(\check{\Lambda})\otimes BA$; indeed, \eqref{eq-quadratic-form-to-alternating-pairing} factors through $\wedge^2(\check{\Lambda})/2\otimes A(-1) = 0$. In this case, \eqref{eq-theta-data-triangle-coefficient} canonically splits.
\end{rem}

\begin{rem}
\label{rem-splitting-theta-data}
If $\Lambda$ has rank $1$, then $\wedge^2(\check{\Lambda}) = 0$ and \eqref{eq-theta-data-triangle-coefficient} also splits: the map $\Sym^2(\check{\Lambda})\otimes A(-1)\rightarrow \vartheta^{(2)}(\Lambda)\otimes A(-1)$ is defined by lifting a quadratic form uniquely to $\check{\Lambda}^{\otimes 2}\otimes A(-1)$ and applying the map induced from \eqref{eq-second-theta-data-integral}.

In fact, any basis $\Lambda\cong\bigoplus_{i\in I}\mathbb Z e_i$ induces a splitting of \eqref{eq-theta-data-triangle-coefficient} by the restriction maps composed with the splitting for $\Lambda = \mathbb Z$:
\begin{align*}
\bigoplus_{i\in I} e_i^* : \vartheta^{(2)}(\Lambda)\otimes A(-1) &\rightarrow \bigoplus_{i\in I} \vartheta^{(2)}(\mathbb Z)\otimes A(-1) \\
&\rightarrow \bigoplus_{i\in I} B^2A \cong \check{\Lambda}\otimes B^2A.
\end{align*}
\end{rem}

\subsection{Classification: tori}
\label{sect-classification-tori}

\begin{void}
\label{void-classification-tori-setup}
Let $S$ be a scheme. Suppose that $A$ is a locally constant \'etale sheaf of abelian groups over $S$ of invertible order. Let $T\rightarrow S$ be a torus with sheaf of cocharacters $\Lambda$, so $T\cong \Lambda\otimes\mathbb G_m$.

Let $\underline{\Gamma}{}_e(BT, B^4A(1))$ denote the \'etale sheaf of connective $\mathbb Z$-module spectra over the base scheme $S$, whose sections are rigidified maps $BT\rightarrow B^4A(1)$. The classification of \'etale metaplectic covers of $T$ amounts to describing the \'etale sheaf $\underline{\Gamma}{}_e(BT, B^4A(1))$.
\end{void}

\begin{thm}
\label{thm-classification-torus}
There is a canonical equivalence of \'etale sheaves of connective $\mathbb Z$-module spectra:
\begin{equation}
\label{eq-classification-torus}
\vartheta^{(2)}(\Lambda) \otimes A(-1) \cong \underline{\Gamma}{}_e(BT, B^4A(1)).
\end{equation}
In particular, $\underline{\Gamma}{}_e(BT, B^4A(1))$ is situated in a pushout square, a pullback square, and a cofiber sequence:
\begin{equation}
\label{eq-second-theta-data-coefficients}
\begin{tikzcd}[column sep = 1em]
	\wedge^2(\check{\Lambda})\otimes A(-1) \ar[r]\ar[d] & \check{\Lambda}^{\otimes 2}\otimes A(-1) \ar[d, "(1)"] \\
	\check{\Lambda}\otimes B^2A \ar[r, "(2)"] & \underline{\Gamma}{}_e(BT, B^4A(1)) \ar[r]\ar[d] & \Sym^2(\check{\Lambda})\otimes A(-1) \ar[d] \\
	& \check H^{(2)}(\Lambda)\otimes B^2A \ar[r] & \wedge^2(\check{\Lambda})\otimes BA
\end{tikzcd}
\end{equation}
\end{thm}

\begin{void}
\label{void-construction-of-torus-covers}
Let us explicitly describe the two labeled maps in \eqref{eq-second-theta-data-coefficients}. Recall that the Kummer torsor $\Psi : \mathbb G_m \rightarrow \lim_N B(\mu_N)$ (where $N$ ranges over integers $\ge 1$ invertible on $S$) induces a $\mathbb Z$-linear map $B\mathbb G_m \rightarrow \lim_NB^2(\mu_N)$, so in particular a rigidified section over $B\mathbb G_m$.

Our descriptions of the maps (1) \& (2) are as follows:
\begin{enumerate}
	\item $x_1\otimes x_2\otimes a \mapsto (x_1^*\Psi \cup x_2^*\Psi)\otimes a$, for $x_1,x_2\in\check{\Lambda}$ and $a\in A(-1)$;
	\item $x\otimes t \mapsto x^*\Psi^*(t)$, where $t \in B^2A$ is viewed as a map $\lim_N B^2(\mu_N) \rightarrow B^4A(1)$.
\end{enumerate}

It is not \emph{a priori} clear how to identify them over $\wedge^2(\check{\Lambda})\otimes A(-1)$. This identification will be constructed in the proof of Theorem \ref{thm-classification-torus}. 

In this proof, we shall first construct the analogous diagram for $\underline{\Gamma}{}_e(T, B^2A(1))$, where the necessary identification will come from the self-cup product formula $\Psi\cup\Psi \cong \Psi^*(\Psi(-1))$ of \S\ref{sec-cup-product}. The corresponding diagram for $\underline{\Gamma}{}_e(BT, B^4A(1))$ follows from a cosimplicial limit.
\end{void}

\begin{rem}
\label{rem-tori-classification-linear-part}
Informally, the functor (1) produces extensions of $T$ by $B^2A(1)$ which are ``defined by cocycles'', while the functor (2) produces $\mathbb Z$-linear extensions. We justify the second statement now but defer the first to \S\ref{sect-torus-covers-defined-by-cocycles}.

Indeed, the functor (2) in \eqref{eq-second-theta-data-coefficients} defines morphisms of complexes $T[1] \rightarrow A(1)[4]$, i.e.~$\mathbb Z$-linear maps $BT \rightarrow B^4A(1)$. In fact, it is an equivalence onto the groupoid of $\mathbb Z$-linear maps by the following calculation using Lemma \ref{lem-cochains-on-Gm-equivalences}:
\begin{align}
\notag
\underline{\Maps}{}_{\mathbb Z}(BT, B^4A(1)) &\cong \check{\Lambda}\otimes\underline{\Maps}{}_{\mathbb Z}(B\mathbb G_m, B^4A(1)) \\
\label{eq-tori-classification-linear-part}
&\cong \check{\Lambda}\otimes B^2A.
\end{align}
The cofiber sequence in \eqref{eq-second-theta-data-coefficients} implies that $\mathbb Z$-linear maps $BT \rightarrow B^4A(1)$ form a \emph{full} subgroupoid of the goupoid of all pointed maps.
\end{rem}

\begin{prop}
\label{prop-classification-torus-first-theta}
There is a canonical equivalence of \'etale sheaves of connective $\mathbb Z$-module spectra:
\begin{equation}
\label{eq-classification-torus-first-theta}
\vartheta^{(1)}(\Lambda) \otimes A(-1) \cong \underline{\Gamma}{}_e(T, B^2A(1)).
\end{equation}
\end{prop}
\begin{proof}
We shall first construct a functor from $\vartheta^{(1)}(\Lambda) \otimes A(-1)$ to $\underline{\Gamma}{}_e(T, B^2A(1))$ using the description of the former as a pushout, see \eqref{eq-first-theta-data-integral}. Indeed, let us first construct maps:
\begin{equation}
\label{eq-classification-torus-pushout-maps-first-theta}
\begin{tikzcd}[column sep = 1em]
	& \check{\Lambda}^{\otimes 2} \otimes A(-1) \ar[d, "(1)"] \\
	\check{\Lambda}\otimes A[1] \ar[r, "(2)"] & \underline{\Gamma}{}_e(T, B^2A(1))
\end{tikzcd}
\end{equation}
Writing $\Psi : \mathbb G_m \rightarrow \lim_N B(\mu_N)$ for the Kummer torsor, we describe these maps as follows:
\begin{enumerate}
	\item $x_1\otimes x_2\otimes a \mapsto (x_1^*\Psi \cup x_2^*\Psi)\otimes a$, for $x_1, x_2\in\check{\Lambda}$ and $a\in A(-1)$;
	\item $x\otimes t \mapsto x^*\Psi^*(t)$, where $t\in BA$ is viewed as a map $\lim_N B(\mu_N) \rightarrow B^2A(1)$.
\end{enumerate}
Let us construct an identification of their restrictions to $\Gamma^2(\check{\Lambda})\otimes A(-1)$.

It suffices to treat the case $A = \mu_N$ for some $N\ge 1$ invertible on $S$. In this case, \eqref{eq-classification-torus-pushout-maps-first-theta} consists of $\mathbb Z/N$-linear morphisms, so by adjunction, it suffices to identify the two circuits of the following diagram:
\begin{equation}
\label{eq-classification-torus-homotopy}
\begin{tikzcd}[column sep = 1em]
	\Gamma^2(\check{\Lambda}) \ar[r]\ar[d] & \check{\Lambda}^{\otimes 2} \ar[d, twoheadrightarrow] \\
	\check{\Lambda}/2 \ar[r, "x\mapsto x\otimes x"]\ar[d, swap, "\Psi(-1)"] & \Ant^2(\check{\Lambda}) \ar[d, "(1)"] \\
	\check{\Lambda}\otimes B\mu_N \ar[r, "(2)"] & \underline{\Gamma}{}_e(T, B^2\mu_N^{\otimes 2})
\end{tikzcd}
\end{equation}
Here, we have factored the morphism (1) through $\Ant^2(\check{\Lambda})$ using the canonical anti-symmetric structure on $x_1\otimes x_2\mapsto x_1^*\Psi \cup x_2^*\Psi$ and the vanishing of $\tn H^{-1}$ in Remark \ref{rem-coinvariants-against-the-sign-character}.

The commutativity of the lower square in \eqref{eq-classification-torus-homotopy} can in turn be constructed by identifying the two circuits after pre-composing with $\check{\Lambda}\twoheadrightarrow \check{\Lambda}/2$, and showing that this identification is compatible with $2$-torsion. Over $\check{\Lambda}$, the two circuits are defined by the Yoneda pairing:
$$
\check{\Lambda}\otimes\underline{\Gamma}{}_e(\mathbb G_m, B^2\mu_N^{\otimes 2}) \rightarrow \underline{\Gamma}{}_e(T, B^2\mu_N^{\otimes 2}),\quad x\otimes t\mapsto x^*(t)
$$
applied to sections $t := \Psi\cup\Psi$ respectively $\Psi^*(\Psi(-1))$. They are identified by Theorem \ref{thm-self-cup-product} compatibly with the $2$-torsion structures.

Having constructed the functor from $\vartheta^{(1)}(\Lambda) \otimes A(-1)$ to $\underline{\Gamma}{}_e(T, B^2A(1))$, we prove that it is an equivalence by calculating the induced maps on $\pi_n$. Since the cohomology of $T\rightarrow S$ commutes with arbitrary base change \cite[Rappel 1.5.1]{MR1441006}, we may replace $S$ by the spectrum of a separably closed field. In this case, there holds:
$$
H^i(T, A(1)) \cong
\begin{cases}
	\check{\Lambda}\otimes A & i = 1\\
	\wedge^2(\check{\Lambda})\otimes A(-1) & i = 2
\end{cases}
$$
and the isomorphisms are defined by $\Psi$ and the cup product.

On the other hand, $\vartheta^{(1)}(\Lambda)\otimes A(-1)$ is an extension of $\wedge^2(\check{\Lambda})\otimes A(-1)$ by $\check{\Lambda}\otimes BA$ by the maps defined in \eqref{eq-first-theta-data-integral}. This finishes the proof.
\end{proof}

\begin{proof}[Proof of Theorem \ref{thm-classification-torus}]
Consider the cosimplicial system $[n] \mapsto \check{\Lambda}^{\oplus n}$ defined by the coalgebra structure on $\check{\Lambda}$, see \S\ref{void-cosimplicial-system-character-lattice}. Since $\vartheta^{(2)}(\Lambda)\otimes A(-1)$ is identified with $\lim_{[n]}(\vartheta^{(1)}(\Lambda^{\oplus n})\otimes A(-1)[2])$ and Proposition \ref{prop-classification-torus-first-theta} identifies $\vartheta^{(1)}(\Lambda)\otimes A(-1)$ with $\underline{\Gamma}{}_e(T, B^2A(1))$ naturally in $T$, it suffices to establish an equivalence:
\begin{equation}
\label{eq-classification-torus-descent}
\underline{\Gamma}{}_e(BT, B^4A(1)) \cong \lim_{[n]} B^2\underline{\Gamma}{}_e(T^{\times n}, B^2A(1)).
\end{equation}

We argue that expressing $BT$ as the colimit of the simplicial system $[n]\mapsto T^{\times n}$ defines equivalences whose composition gives \eqref{eq-classification-torus-descent}:
\begin{align}
	\label{eq-descent-torus-pointed}
	\underline{\Gamma}{}_e(BT, B^4A(1)) &\cong \lim_{[n]} \underline{\Gamma}{}_e(T^{\times n}, B^4A(1)) \\
	\label{eq-descent-torus-pointed-zero-vanishing}
	&\cong \lim_{[n]} B\underline{\Gamma}{}_e(T^{\times n}, B^3 A(1)) \\
	\label{eq-descent-torus-pointed-first-vanishing}
	&\cong \lim_{[n]} B^2\underline{\Gamma}{}_e(T^{\times n}, B^2A(1)).
\end{align}

Indeed, \eqref{eq-descent-torus-pointed} follows from writing $\underline{\Gamma}(S, B^4A(1))$ as the limit of $[n]\mapsto\underline{\Gamma}(S, B^4A(1))$ and commuting the limit with fiber product. The isomorphism \eqref{eq-descent-torus-pointed-zero-vanishing} follows because the zeroth simplex $\underline{\Gamma}{}_e(S, B^3A(1))$ is contractible. Next, we observe that:
$$
\pi_0\underline{\Gamma}{}_e(T^{\times n}, B^3A(1))\cong \wedge^3(\Lambda^{\oplus n}) \otimes A(-2).
$$
However, we have $\lim_{[n]}(\wedge^3(\check{\Lambda}^{\oplus n})) \cong \Sym^3(\check{\Lambda})[-3]$ as chain complexes, so the limit of spaces $\lim_{[n]}B(\pi_0\underline{\Gamma}{}_e(T^{\times n}, B^3A(1)))$ is contractible, giving \eqref{eq-descent-torus-pointed-first-vanishing}.
\end{proof}

\subsection{Covers defined by cocycles}
\label{sect-torus-covers-defined-by-cocycles}

\begin{void}
Let us study the morphism (1) in \eqref{eq-second-theta-data-coefficients} more closely. We may regard any pointed morphism $BT \rightarrow B^4A(1)$ as an $\mathbb E_1$-monoidal morphism $T \rightarrow B^3A(1)$ and consequently an $\mathbb E_1$-monoidal extension $T^{\dagger}$ of $T$ by $B^2A(1)$ (Lemma \ref{lem-monoidal-morphisms-classified-by-groups}).

If the underlying pointed morphism of $T\rightarrow B^3A(1)$ is trivialized, then $T^{\dagger} \rightarrow T$ admits a section $s$ as a pointed stack. Consequently, we obtain a morphism:
\begin{equation}
\label{eq-torus-covers-cocycle}
T\times T\rightarrow B^2A(1),\quad (t_1, t_2)\mapsto s(t_1t_2)s(t_1)^{-1}s(t_2)^{-1}.
\end{equation}
\end{void}

\begin{void}
\label{void-birigidified-morphism}
Suppose that $X_1$, $X_2$ are stacks over $S$ pointed by $e_1 : S\rightarrow X_1$, $e_2 : S\rightarrow X_2$ and $n\ge 0$ is an integer.

We call a section of $B^nA(1)$ over $X_1\times X_2$ \emph{bi-rigidified} if it is equipped with trivializations along $e_1\times X_2$, $X_1\times e_2$ and an isomorphism of the two induced trivializations along $e_1\times e_2$. We denote the \'etale sheaf of bi-rigidified morphism by:
$$
\underline{\Gamma}{}_{e_1, e_2}(X_1\times X_2, B^nA(1)).
$$

Note that any section $x_1\otimes x_2\otimes a$ of $\check{\Lambda}^{\otimes 2}\otimes A(-1)$ naturally defines a bi-rigidified morphism $T\times T\rightarrow B^2A(1)$ given by $(p_1^*x_1^*\Psi \cup p_2^*x_2^*\Psi)\otimes a$, where $p_1$, $p_2$ denote the two projections from $T\times T$ to $T$. Here, $\Psi$ denotes the Kummer torsor on $\mathbb G_m$.
\end{void}

\begin{lem}
\label{lem-birigidified-morphism-classification}
The above functor and its analogue for $BT\times BT$ define equivalences:
\begin{align*}
\check{\Lambda}^{\otimes 2}\otimes A(-1) &\cong \underline{\Gamma}{}_{e,e}(T\times T, B^2A(1)), \\
\check{\Lambda}^{\otimes 2}\otimes A(-1) &\cong \underline{\Gamma}{}_{e, e}(BT\times BT, B^4A(1)).
\end{align*}
\end{lem}
\begin{proof}
Indeed, morphisms $T\times T \rightarrow B^2A(1)$ rigidified along $T\times e$ are equivalent to morphisms
\begin{equation}
\label{eq-birigidified-morphism-one-sided}
T\rightarrow \underline{\Gamma}{}_e(T, B^2A(1)).
\end{equation}
Hence rigidifying it along $e\times T$, \emph{as} a morphism rigidified along $T\times e$, is equivalent to rigidifying the corresponding morphism \eqref{eq-birigidified-morphism-one-sided}.

By Proposition \ref{prop-classification-torus-first-theta}, the neutral component of $\underline{\Gamma}{}_e(T, B^2A(1))$ is identified with $B^2(\check{\Lambda}\otimes A)$. Hence, a rigidified morphism \eqref{eq-birigidified-morphism-one-sided} is equivalent to a rigidified morphism $BT \rightarrow B^2(\check{\Lambda}\otimes A)$, which by Proposition \ref{prop-classification-torus-first-theta} again (applied to coefficient group $\check{\Lambda}\otimes A$) is equivalent to a section of $\check{\Lambda}^{\otimes 2}\otimes A(-1)$.

The statment for $\underline{\Gamma}{}_{e, e}(BT\times BT, B^4A(1))$ is proved in the same manner, substituting Proposition \ref{prop-classification-torus-first-theta} by Theorem \ref{thm-classification-torus}.
\end{proof}

\begin{void}
Consider the functor of forgetting the $\mathbb E_1$-monoidal structure:
\begin{align}
\notag
\underline{\Gamma}{}_e(BT, B^4A(1)) &\cong \underline{\Maps}{}_{\mathbb E_1}(T, B^3A(1)) \\
\label{eq-forgetting-monoidal-structure}
&\rightarrow \underline{\Gamma}{}_e(T, B^3A(1)).
\end{align}

An object in the fiber of \eqref{eq-forgetting-monoidal-structure} naturally defines a bi-rigidified morphism \eqref{eq-torus-covers-cocycle} as the cocycle of the corresponding extension of $T$ by $B^2A(1)$, which gives a section of $\check{\Lambda}^{\otimes 2}\otimes A(-1)$ by Lemma \ref{lem-birigidified-morphism-classification}: we call it the \emph{cocycle} of this object.

The following result is a formulation of the idea that sections of $\check{\Lambda}^{\otimes 2}\otimes A(-1)$ yield \'etale metaplectic covers ``defined by cocycles''.
\end{void}

\begin{prop}
\label{prop-torus-cocycle-covers}
The functor \tn{(1)} of \eqref{eq-second-theta-data-coefficients} defines an equivalence between $\check{\Lambda}^{\otimes 2}\otimes A(-1)$ and the fiber of \eqref{eq-forgetting-monoidal-structure}. Its inverse is given by the association of cocycles.
\end{prop}
\begin{proof}
Recall from the proof of Theorem \ref{thm-classification-torus} that the functor (1) of \eqref{eq-second-theta-data-coefficients} is defined by the cosimplicial limit of a system of maps of complexes parametrized by $[n]\in\Delta$:
$$
(\check{\Lambda}^{\oplus n})^{\otimes 2}\otimes A(-1)[2] \rightarrow \underline{\Gamma}{}_e(T^{\times n}, B^2A(1))[2],
$$
using Lemma \ref{lem-calculation-cosimplicial-limit}(2). The forgetful functor \eqref{eq-forgetting-monoidal-structure} is defined by the evaluation at $[1]\in\Delta$:
\begin{align*}
\lim_{[n]} \underline{\Gamma}{}_e(T^{\times n}, B^2A(1))[2]  &\rightarrow \underline{\Gamma}{}_e(T, B^2A(1))[1] \\
&\cong B\underline{\Gamma}{}_e(T, B^2A(1)) \subset \underline{\Gamma}{}_e(T, B^3A(1)).
\end{align*}

In particular, we obtain a commutative diagram:
\begin{equation}
\label{eq-forgetful-monoidal-diagram}
\begin{tikzcd}
	\check{\Lambda}^{\otimes 2}\otimes A(-1) \ar[d, "(1)"] \ar[r, "\ev_{[1]}"] & B(\check{\Lambda}^{\otimes 2}\otimes A(-1)) \ar[d] \\
	\underline{\Gamma}{}_e(BT, B^4A(1)) \ar[r, "\ev_{[1]}"] & B\underline{\Gamma}{}_e(T, B^2A(1))
\end{tikzcd}
\end{equation}
The top horizontal arrow is canonically trivialized by splitting the quotient of \eqref{eq-calculation-cosimplicial-limit-square}. Therefore, the lower circuit of \eqref{eq-forgetful-monoidal-diagram} is canonically trivialized, so we obtain a functor from $\check{\Lambda}^{\otimes 2}\otimes A(-1)$ to the fiber of the forgetful functor.

To conclude that it is an equivalence, it suffices to observe that this lower circuit induces a long exact sequence of homotopy groups:
\begin{align*}
0 \rightarrow A \cong A \rightarrow 0 \rightarrow 0 &\rightarrow \wedge^2\check{\Lambda}\otimes A(-1) \\
&\rightarrow \check{\Lambda}^{\otimes 2}\otimes A(-1) \rightarrow \Sym^2(\check{\Lambda})\otimes A(-1) \rightarrow 0.
\end{align*}
The assertion on the inverse is left to the interested reader.
\end{proof}

\subsection{The $\mathbb E_1$-monoidal morphism $F : \Lambda\rightarrow B^2A$}
\label{sect-monoidal-classification-data}

\begin{void}
Viewing a pointed morphism $\mu : BT \rightarrow B^4A(1)$ as an $\mathbb E_1$-monoidal morphism $T \rightarrow B^3A(1)$, we may apply the functor $\underline{\Gamma}{}_e(\mathbb G_m, -)$ to obtain an $\mathbb E_1$-monoidal morphism:
\begin{align}
\notag
\Lambda &\rightarrow \underline{\Gamma}{}_e(\mathbb G_m, T) \\
&\rightarrow \underline{\Gamma}{}_e(\mathbb G_m, B^3A(1)) \cong B^2(A),
\label{eq-monoidal-morphism-from-sections}
\end{align}
where the last isomorphism follows from the proof of Lemma \ref{lem-cochains-on-Gm-equivalences}, using the vanishing of $H^3(\mathbb G_{m, \bar s}, A(1))$ at a geometric point $\bar s$.

The following Proposition tells us how to read off the $\mathbb E_1$-monoidal morphism \eqref{eq-monoidal-morphism-from-sections} from the $\vartheta$-datum corresponding to $\mu$.
\end{void}

\begin{prop}
\label{prop-monoidal-morphism-from-theta-data}
The following diagram is canonically commutative:
\begin{equation}
\label{eq-monoidal-morphism-from-theta-data}
\begin{tikzcd}[column sep = 1em]
	\underline{\Gamma}{}_e(BT, B^4A(1)) \ar[r, phantom, "\cong"]\ar[d] & \underline{\Maps}{}_{\mathbb E_1}(T, B^3A(1)) \ar[d, "{\underline{\Gamma}{}_e(\mathbb G_m,-)}"] \\
	\check H^{(2)}(\Lambda)\otimes B^2A \ar[r] & \underline{\Maps}{}_{\mathbb E_1}(\Lambda, B^2A)
\end{tikzcd}
\end{equation}
where the left vertical arrow is as in \eqref{eq-second-theta-data-coefficients} and the bottom horizontal arrow is the restriction along the monoidal section of the map $H^{(2)}(\Lambda)\rightarrow \Lambda$ in \eqref{eq-alternatrized-second-heisenberg-extension}.
\end{prop}
\begin{proof}
As in the proof of Theorem \ref{thm-classification-torus}, we shall produce this commutative diagram from an analogous commutative diagram for $\underline{\Gamma}{}_e(T, B^2A(1))$ via a cosimplicial limit.

The relevant diagram for $\underline{\Gamma}{}_e(T, B^2A(1))$ is the following one:
\begin{equation}
\label{eq-pointed-morphism-from-theta-data}
\begin{tikzcd}[column sep = 1em]
	\underline{\Gamma}{}_e(T, B^2A(1)) \ar[d]\ar[r, phantom, "\cong"] & \underline{\Gamma}{}_e(T, B^2A(1)) \ar[d, "{\underline{\Gamma}{}_e(\mathbb G_m, -)}"] \\
	\check H^{(1)}(\Lambda)\otimes BA \ar[r] & \underline{\Gamma}{}_e(\Lambda, BA)
\end{tikzcd}
\end{equation}
To prove that it is commutative, we use the description of $\underline{\Gamma}{}_e(T, B^2A(1))$ as a pushout, provided by Proposition \ref{prop-classification-torus-first-theta} and \eqref{eq-first-theta-data-integral}. Namely, we identify the two circuits of \eqref{eq-pointed-morphism-from-theta-data} after pre-composing with the two maps of \eqref{eq-classification-torus-pushout-maps-first-theta}, and check that these identifications agree over $\Gamma^2(\check{\Lambda})\otimes A(-1)$.

The identification over $\check{\Lambda}\otimes BA$ is clear. To make the identification over $\check{\Lambda}^{\otimes 2}\otimes A(-1)$, we may assume $A = \mu_N$ for some $N$ invertible on $S$. Along the upper circuit of \eqref{eq-pointed-morphism-from-theta-data}, we thus find the following map by adjunction:
\begin{equation}
\label{eq-pointed-morphism-from-theta-data-calculation}
\check{\Lambda}^{\otimes 2}\rightarrow \underline{\Gamma}{}_e(\Lambda, BA),\quad x_1\otimes x_2\mapsto (\lambda\mapsto \Psi^{x_1(\lambda)}\cup \Psi^{x_2(\lambda)}),
\end{equation}
where $\Psi^{x_1(\lambda)}\cup \Psi^{x_2(\lambda)}$ is viewed as a section of $B(A)$ via the equivalence $\underline{\Gamma}{}_e(\mathbb G_m, B^2A(1)) \cong B(A)$ of Lemma \ref{lem-cochains-on-Gm-equivalences}, defined by $\Psi^*$.

On the other hand, Theorem \ref{thm-self-cup-product} gives us an identification:
$$
\Psi^{x_1(\lambda)} \cup \Psi^{x_2(\lambda)} \cong \Psi^*(\Psi(-1)^{x_1(\lambda)x_2(\lambda)}),
$$
so \eqref{eq-pointed-morphism-from-theta-data-calculation} sends a bilinear form $c$ to the pointed map $\lambda\mapsto \Psi(-1)^{c(\lambda, \lambda)}$. This is precisely the restriction of the lower circuit of \eqref{eq-pointed-morphism-from-theta-data} to $\check{\Lambda}^{\otimes 2}$, see the description after \eqref{eq-first-theta-data-integral}. We omit the verification that these identifications agree over $\Gamma^2(\check{\Lambda})\otimes A(-1)$.
\end{proof}

\subsection{$\mathbb E_{\infty}$-monoidal covers}
\label{sect-commutative-covers}

\begin{void}
Let $S$, $A$, and $T = \Lambda\otimes\mathbb G_m$ be as in \S\ref{void-classification-tori-setup}. Viewing sections of $\underline{\Gamma}{}_e(BT, B^4A(1))$ as $\mathbb E_0$-monoidal morphisms $BT \rightarrow B^4A(1)$, we see that it receives forgetful functors:
\begin{align}
	\label{eq-commutative-structure-functors}
	\underline{\Maps}{}_{\mathbb E_{\infty}}(BT, B^4A(1)) \rightarrow \cdots &\rightarrow \underline{\Maps}{}_{\mathbb E_1}(BT, B^4A(1)) \\
	\label{eq-monoidal-structure-functor}
	&\rightarrow \underline{\Gamma}{}_e(BT, B^4A(1)).
\end{align}

According to Theorem \ref{thm-classification-torus} and the triangle \eqref{eq-theta-data-triangle-coefficient}, $\underline{\Gamma}{}_e(BT, B^4A(1))$ admits a canonical functor to $\Sym^2(\check{\Lambda})\otimes A(-1)$; we call the image of $\mu$ the quadratic form $Q$ \emph{associated to $\mu$}. Then $Q$ defines an $A(-1)$-valued symmetric form $b$.
\end{void}

\begin{prop}
\label{prop-commutative-covers}
The following statements hold:
\begin{enumerate}
	\item the functors in \eqref{eq-commutative-structure-functors} are equivalences;
	\item the functor \eqref{eq-monoidal-structure-functor} is fully faithful and its essential image consists of sections $\mu\in\underline{\Gamma}{}_e(BT, B^4A(1))$ whose associated symmetric form $b$ vanishes.
\end{enumerate}
\end{prop}
\begin{proof}
The Bar construction defines an equivalence for all $k\ge 1$:
$$
\underline{\Maps}{}_{\mathbb E_{k-1}}(BT, B^4A(1)) \cong \underline{\Gamma}{}_e(B^{k}T, B^{k+4}A(1)).
$$
For $k = 1$, the simplicial system $[n]\mapsto BT^{\times n}$ yields an equivalence by descent:
\begin{align*}
\underline{\Gamma}{}_e(B^2T, B^5A(1)) &\cong \lim_{[n]} \underline{\Gamma}{}_e(BT^{\times n}, B^5A(1)) \\
& \cong \lim_{[n]} B\underline{\Gamma}{}_e(BT^{\times n}, B^4A(1)).
\end{align*}
The triangle in \eqref{eq-second-theta-data-coefficients} shows that we have a triangle of \emph{complexes}:
\begin{equation}
\label{eq-commutative-cover-fiber-sequence-limit}
\lim_{[n]}(\check{\Lambda}^{\oplus n}\otimes A[3]) \rightarrow \lim_{[n]}(\Gamma_e(BT^{\times n}, B^4A(1))[1]) \rightarrow \lim_{[n]}(\Sym^2(\check{\Lambda}^{\oplus n})\otimes A(-1)[1]),
\end{equation}

Applying the isomorphisms $\lim_{[n]}(\check{\Lambda}^{\oplus n}) \cong \check{\Lambda}[-1]$, $\lim_{[n]}(\Sym^2(\check{\Lambda})^{\oplus n}) \cong \Ant^2(\check{\Lambda})[-2]$ of Lemma \ref{lem-calculation-cosimplicial-limit}, we obtain a cofiber sequence of connective complexes by truncating \eqref{eq-commutative-cover-fiber-sequence-limit} in degrees $\le 0$:
\begin{equation}
\label{eq-commutative-cover-fiber-sequence}
\check{\Lambda}\otimes B^2A \rightarrow \underline{\Gamma}{}_e(B^2T, B^5A(1)) \rightarrow \Tor^1(\check{\Lambda}/2, A(-1)).
\end{equation}
Here, the last term appears through the isomorphism below, as $\wedge^2(\check{\Lambda})$ is torsion-free:
$$
\Tor^1(\check{\Lambda}/2, A(-1)) \cong \Tor^1(\Ant^2(\check{\Lambda}), A(-1)).
$$

Now, the cofiber sequence \eqref{eq-commutative-cover-fiber-sequence} is fixed under the operation $\lim_{[n]}B(-)$ applied to the cosimplicial system $[n]\mapsto\check{\Lambda}^{\oplus n}$. The same descent argument then gives statement (1).

For statement (2), we observe that the canonical functor:
\begin{equation}
\label{eq-commutative-cover-loop-functor}
\underline{\Gamma}{}_e(B^2T, B^5A(1)) \rightarrow \underline{\Gamma}{}_e(BT, B^4A(1))
\end{equation}
comes from the morphism of triangles from \eqref{eq-commutative-cover-fiber-sequence-limit} to the triangle in \eqref{eq-second-theta-data-coefficients} defined by evaluation at the first simplex $[1]$. The isomorphisms exhibited in the proof of Lemma \ref{lem-calculation-cosimplicial-limit} then shows that \eqref{eq-commutative-cover-loop-functor} fits into a map of cofiber sequences:
\begin{equation}
\label{eq-commutative-cover-loop-functor-calculation}
\begin{tikzcd}[column sep = 1em]
	\check{\Lambda}\otimes B^2A \ar[r]\ar[d, "\cong"] & \underline{\Gamma}{}_e(B^2T, B^5A(1)) \ar[r]\ar[d] & \Tor^1(\check{\Lambda}/2, A(-1)) \ar[d] \\
	\check{\Lambda}\otimes B^2A \ar[r] & \underline{\Gamma}{}_e(BT, B^4A(1)) \ar[r] & \Sym^2(\check{\Lambda})\otimes A(-1)
\end{tikzcd}
\end{equation}
where the rightmost arrow comes from the short exact sequence in \eqref{eq-linear-algebra-diagram}:
$$
0\rightarrow \Sym^2(\check{\Lambda}) \rightarrow \Gamma^2(\check{\Lambda}) \rightarrow \check{\Lambda}/2 \rightarrow 0.
$$
(It is deduced from the extension of $\Ant^2(\check{\Lambda})$ by $\Sym^2(\check{\Lambda})$ defined by $\check{\Lambda}^{\otimes 2}$, the nondegenerate cochains in $\Sym^2(\check{\Lambda}^{\oplus 2})$.) In other words, $\Tor^1(\check{\Lambda}/2, A(-1))$ appears in \eqref{eq-commutative-cover-loop-functor-calculation} as the subgroup of $A(-1)$-valued quadratic form $Q$ on $\Lambda$ whose symmetric form vanishes.
\end{proof}

\begin{rem}
\label{rem-commutative-cover-cofiber-sequence}
Using the conclusion of Proposition \ref{prop-commutative-covers}, we may reproduce the cofiber sequence \eqref{eq-commutative-cover-fiber-sequence} as follows:
\begin{equation}
\label{eq-commutative-cover-cofiber-sequence}
\underline{\Maps}{}_{\mathbb Z}(BT, B^4A(1)) \rightarrow \underline{\Maps}{}_{\mathbb E_{\infty}}(BT, B^4A(1)) \rightarrow \underline{\Maps}{}_{\mathbb Z}(\Lambda/2, A(-1)),
\end{equation}
where the second functor takes $\mu$ to its associated quadratic form $Q$, viewed as a $2$-torsion homomorphism $\Lambda \rightarrow A(-1)$.
\end{rem}

\begin{void}
If $\mu : BT \rightarrow B^4A(1)$ is an $\mathbb E_{\infty}$-monoidal morphism, then the corresponding composition \eqref{eq-monoidal-morphism-from-sections} inherits an $\mathbb E_{\infty}$-monoidal structure.

In other words, taking $\underline{\Gamma}{}_e(\mathbb G_m, -)$ defines a functor:
\begin{align}
	\notag
	\underline{\Maps}{}_{\mathbb E_{\infty}}(BT, B^4A(1)) & \cong \underline{\Maps}{}_{\mathbb E_{\infty}}(T, B^3A(1)) \\
	&\rightarrow \underline{\Maps}{}_{\mathbb E_{\infty}}(\Lambda, B^2A).
	\label{eq-commutative-cover-classification}
\end{align}
\end{void}

\begin{void}
\label{void-self-commutativity-functor}
On the other hand, there is a canonical functor:
\begin{equation}
\label{eq-self-commutativity-functor}
\mathrm{inv} : \underline{\Maps}{}_{\mathbb E_{\infty}}(\Lambda, B^2A) \rightarrow \underline{\Maps}{}_{\mathbb Z}(\Lambda/2, A),
\end{equation}
with fiber $\underline{\Maps}{}_{\mathbb Z}(\Lambda, B^2A)$.

To define it, we observe that an $\mathbb E_{\infty}$-monoidal morphism $F : \Lambda\rightarrow B^2(A)$ defines a symmetric monoidal extension of $\Lambda$ by $B(A)$, whose commutativity constraint is an anti-symmetric map $c : \Lambda\otimes\Lambda \rightarrow A$. Then \eqref{eq-self-commutativity-functor} sends $F$ to the homomorphism $\lambda\mapsto c(\lambda,\lambda)$.
\end{void}

\begin{prop}
\label{prop-commutative-cover-classification}
The functor \eqref{eq-commutative-cover-classification} is an equivalence and identifies the cofiber sequence \eqref{eq-commutative-cover-cofiber-sequence} with that defined by \eqref{eq-self-commutativity-functor}:
\begin{equation}
\label{eq-quadratic-form-as-linearity-invariant}
\begin{tikzcd}[column sep = 1em]
	\underline{\Maps}{}_{\mathbb Z}(BT, B^4A(1)) \ar[r]\ar[d, "\cong"] & \underline{\Maps}{}_{\mathbb E_{\infty}}(BT, B^4A(1)) \ar[r, "\mu\mapsto Q"]\ar[d, "\cong"] & \underline{\Maps}{}_{\mathbb Z}(\Lambda/2, A(-1)) \ar[d, "\cong"]\\
	\check{\Lambda}\otimes B^2A \ar[r] & \underline{\Maps}{}_{\mathbb E_{\infty}}(\Lambda, B^2A) \ar[r, "\mathrm{inv}"] & \underline{\Maps}{}_{\mathbb Z}(\Lambda/2, A)
\end{tikzcd}
\end{equation}
Here, the right vertical isomorphism uses the canonical identification between the $2$-torsion subgroup of $A(-1)$ and that of $A$.
\end{prop}
\begin{proof}
The fact that \eqref{eq-commutative-cover-classification} is an equivalence will follow once we construct a morphism of fiber sequences as in \eqref{eq-quadratic-form-as-linearity-invariant}. By construction, the functor \eqref{eq-commutative-cover-classification} carries $\mathbb Z$-linear morphisms $BT\rightarrow B^4A(1)$ into $\check{\Lambda}\otimes B^2A$. Hence it remains to show that the right square in \eqref{eq-quadratic-form-as-linearity-invariant} is commutative.

For this statement, we may first replace the functor denoted ``$\mathrm{inv}$'' with the analogously defined functor (which recovers \eqref{eq-self-commutativity-functor} upon taking $\underline{\Gamma}{}_e(\mathbb G_m, -)$):
\begin{align*}
\mathrm{inv} : \underline{\Maps}{}_{\mathbb E_{\infty}}(T, B^3A(1)) &\rightarrow \underline{\Maps}{}_{\mathbb Z}(T/2, BA(1)) \\
&\cong \underline{\Maps}{}_{\mathbb Z}(\Lambda/2, A).
\end{align*}

By functoriality of the construction, we further reduce to the case $T = \mathbb G_m$ and $A = \{\pm 1\}$. Then we need to check the commutativity of the diagram:
$$
\begin{tikzcd}[column sep = 1em]
	\underline{\Maps}{}_{\mathbb E_{\infty}}(B\mathbb G_m, B^4\{\pm 1\}^{\otimes 2}) \ar[r, "\mu\mapsto Q"]\ar[d] & \underline{\Maps}{}_{\mathbb Z}(\mathbb Z/2, \mathbb Z/2) \ar[d, "\cong"] \\
	\underline{\Maps}{}_{\mathbb E_{\infty}}(\mathbb G_m, B^4\{\pm 1\}^{\otimes 2}) \ar[r, "\mathrm{inv}"] & \underline{\Maps}{}_{\mathbb Z}(\mathbb Z/2, \{\pm 1\})
\end{tikzcd}
$$
Since the right column consists only of copies of $\mathbb Z/2$, this follows from the fact that the left vertical arrow is an equivalence and preserves the fibers of the horizontal maps.
\end{proof}

\subsection{Quadratic structure}
\label{sect-quadratic-structure}

\begin{void}
We have seen in Proposition \ref{prop-commutative-covers} that a pointed morphism $\mu : BT \rightarrow B^4A(1)$ whose symmetric form $b$ vanishes acquires an $\mathbb E_1$ (or equivalently $\mathbb E_{\infty}$)-monoidal structure.

It turns out that in general, $\mu$ has a ``quadratic structure'' whose associated bilinear form is a pairing $BT\times BT \rightarrow B^4A(1)$ determined by $b$. This quadratic structure allows us to calculate the commutator of the $\mathbb E_1$-monoidal morphism $T \rightarrow B^3A(1)$ corresponding to $\mu$ (Corollary \ref{cor-commutator-symmetric-form}). Similar calculations have also been performed in \cite[\S4]{MR1441006}.
\end{void}

\begin{void}
Indeed, a pointed morphism $\mu : BT \rightarrow B^4A(1)$ defines a bi-rigidified morphism (see \S\ref{void-birigidified-morphism}):
\begin{equation}
\label{eq-pointed-morphism-quadratic-expression}
(m^*\mu) \otimes (p_1^*\mu^{\otimes -1})\otimes (p_2^*\mu^{\otimes -1}) : BT\times BT \rightarrow B^4A(1),
\end{equation}
where $m$ denotes the product morphism $BT\times BT\rightarrow BT$.

The following result shows that $\mu$ is automatically a ``quadratic function'' whose associated symmetric form is defined by $b$.
\end{void}

\begin{prop}
\label{prop-pointed-morphism-quadratic}
Let $\mu : BT\rightarrow B^4A(1)$ be a pointed morphism with associated symmetric form $b$. Then the bi-rigidified morphism \eqref{eq-pointed-morphism-quadratic-expression} corresponds to $b$ under Lemma \ref{lem-birigidified-morphism-classification}.
\end{prop}
\begin{proof}
If $\mu$ comes from a $\mathbb Z$-linear morphism $BT\rightarrow B^4A(1)$, then \eqref{eq-pointed-morphism-quadratic-expression} is trivial and $b = 0$. Hence it suffices to determine the morphism \eqref{eq-pointed-morphism-quadratic-expression} when $\mu$ is defined by a bilinear form $c \in \check{\Lambda}^{\otimes 2}\otimes A(-1)$, see \S\ref{void-construction-of-torus-covers}.

Since the statement is compatible with tensor product of the coefficient group $A$, we may assume that $A = \mu_N$ for an integer $N\ge 1$ invertible on $S$. Because we wish to prove the equality of two elements of a discrete groupoid, namely $\check{\Lambda}\otimes\check{\Lambda}/N$, we may consider an integral lift $c = x_1\otimes x_2$ for $x_1, x_2\in\check{\Lambda}$.

The morphism \eqref{eq-pointed-morphism-quadratic-expression} is then given by:
\begin{align*}
e_1, e_2 &\mapsto (\Psi_{x_1(e_1e_2)} \cup \Psi_{x_2(e_1e_2)})\cdot (\Psi_{x_1(e_1)} \cup \Psi_{x_2(e_1)})^{-1}  \cdot (\Psi_{x_1(e_2)} \cup \Psi_{x_2(e_2)})^{-1} \\
&\cong (\Psi_{x_1(e_1)}\cup \Psi_{x_2(e_2)})\cdot (\Psi_{x_1(e_2)}\cup \Psi_{x_2(e_1)}),
\end{align*}
using the linearity of $x_1$, $x_2$, $\Psi$, and the bilinearity of the cup product. Since the cup product of sections of $B^2\mu_N$ is \emph{symmetric}, the last expression is precisely the value of the map $BT\times BT\rightarrow B^4A(1)$ at $(e_1, e_2)$ corresponding to $x_1\otimes x_2 + x_2\otimes x_1$ under Lemma \ref{lem-birigidified-morphism-classification}, i.e.~the symmetrization of $c$.
\end{proof}

\begin{void}
A pointed morphism $\mu : BT \rightarrow B^4A(1)$ is equivalently described by an $\mathbb E_1$-monoidal morphism $T \rightarrow B^3A(1)$ which we denote by the same letter $\mu$.

Such a morphism has a ``commutator'', which is a bi-rigidified morphism:
\begin{equation}
\label{eq-birigidified-morphism-torus}
\com(\mu) : T \times T \rightarrow B^2A(1),
\end{equation}
It is defined as the following loop in the groupoid of bi-rigidified morphisms $T\times T\rightarrow B^3A(1)$ (using the equivalence between the loop space of such bi-rigidified morphisms with bi-rigidified morphisms $T\times T\rightarrow B^2A(1)$):
$$
\begin{tikzcd}[column sep = 1em]
	\mu \circ m \ar[r, "\cong"] & m\circ (\mu\times \mu) \ar[d, "\cong"] \\
	\mu \circ m^{\mathrm{op}} \ar[u, "\cong"] & m^{\mathrm{op}} \circ (\mu\times \mu) \ar[l, "\cong"]
\end{tikzcd}
$$
where $m^{\mathrm{op}}$ denotes the reversed multiplication $a_1,a_2\mapsto a_2a_1$ on $T$ and $B^3A(1)$, the horizontal arrows are defined by the $\mathbb E_1$-monoidal structure of $\mu$, and the vertical maps are defined by the commutative structures $m \cong m^{\mathrm{op}}$.
\end{void}

\begin{rem}
Regarding an $\mathbb E_1$-monoidal morphism $T \rightarrow B^3A(1)$ as an extension of $\mathbb E_1$-monoidal groupoids $B^2A(1) \rightarrow T^{\dagger} \rightarrow T$, the value of $\com(\mu)$ at sections $t_1, t_2\in T$ is identified with the $A(1)$-gerbe of isomorphisms $\tilde t_1\tilde t_2 \cong \tilde t_2\tilde t_1$, for any of their lifts $\tilde t_1, \tilde t_2 \in T^{\dagger}$.
\end{rem}

\begin{cor}
\label{cor-commutator-symmetric-form}
Let $\mu : BT \rightarrow B^4A(1)$ be a pointed morphism with associated symmetric form $b$. Then the commutator \eqref{eq-birigidified-morphism-torus} of its induced $\mathbb E_1$-monoidal morphism $T\rightarrow B^3A(1)$ correponds to $b$ under Lemma \ref{lem-birigidified-morphism-classification}.
\end{cor}
\begin{proof}
Given a bi-rigidified morphism $BT\times BT \rightarrow B^4A(1)$, we may iteratively consider loop spaces along the two factors to obtain a bi-rigidified morphism $T\times T\rightarrow B^2A(1)$.

This operation renders the following diagram commutative, where the horizontal arrows associate to $\mu$ the bi-rigidified morphisms \eqref{eq-pointed-morphism-quadratic-expression} and \eqref{eq-birigidified-morphism-torus}:
$$
\begin{tikzcd}[column sep = 1em]
	\underline{\Gamma}{}_e(BT, B^4A(1)) \ar[r]\ar[d, "\Omega"] & \underline{\Gamma}{}_{e,e}(BT\times BT, B^4A(1)) \ar[d, "\Omega\times\Omega"]\ar[r, "\cong"] & \check{\Lambda}^{\otimes 2}\otimes A(-1)\ar[d, "\cong"] \\
	\underline{\Maps}{}_{\mathbb E_1}(T, B^3A(1))\ar[r] & \underline{\Gamma}{}_{e, e}(T\times T, B^2A(1)) \ar[r, "\cong"] & \check{\Lambda}^{\otimes 2}\otimes A(-1)
\end{tikzcd}
$$
Thus the assertion follows from Proposition \ref{prop-pointed-morphism-quadratic}.
\end{proof}

\medskip

\section{Reductive group schemes}
\label{sec-classification}

The first goal of this section is to complete the classification of \'etale metaplectic covers for a reductive group scheme. We have already treated the case of a torus in the previous section. The new input is a canonical fiber sequence (Proposition \ref{prop-reductive-classification}):
$$
\underline{\Maps}{}_{\mathbb Z}(\pi_1(G), B^2A) \rightarrow \underline{\Gamma}{}_e(BG, B^4A(1)) \rightarrow \Quad(\Lambda, A(-1))_{\st},
$$
linking \'etale metaplectic covers to ``abelianized cohomology'' and ``strict'' (or ``strictly Weyl-invariant'') quadratic forms on its sheaf of cocharacters.

The statement of our classification Theorem \ref{thm-reductive-classification} requires either choosing a Borel subgroup or a maximal torus: it cannot be formulated intrinsically to $G$. However we prove in \S\ref{sect-dependence-on-borel} that after restriction to a sublattice inside $\Lambda$, the dependence goes away. This fact is important for the definition of the metaplectic $\tn L$-group.

In \S\ref{sect-abelian-covers}, we study \'etale metaplectic covers of $G$ which descend to its abelianization. One natural source for such covers is the restriction of certain \'etale metaplectic covers of $G$ to suitable parabolic subgroups.

Sections \S\ref{sect-metaplectic-center}-\ref{sect-adjoint-equivariance} address some technical points about the center $Z_G$ and the adjoint group $G_{\ad}$. They are included for future applications.

In \S\ref{sect-real-place}, we make some remarks about the case $S = \Spec(\mathbb R)$.

\begin{void}
In this section, $S$ denotes a scheme and $A$ denotes a locally constant \'etale sheaf of abelian groups of order invertible on $S$.
\end{void}

\subsection{Classification}

\begin{void}
\label{void-reductive-group-notations}
Suppose that $G\rightarrow S$ is a reductive group scheme. Denote by $G_{\der}\subset G$ its derived subgroup and $G_{\mathrm{sc}} \twoheadrightarrow G_{\der}$ its simply connected cover.

Denote by $T$ the \emph{universal Cartan} of $G$: it is a torus canonically attached to $G$. To define $T$, we first assume that $G$ is split and we set $T := T_1$ to be the quotient of a Borel subgroup $B_1\subset G$ by its unipotent radical. Another Borel subgroup $B_2\subset G$ is \'etale locally conjugate to the previous one by a section of $G/B_2$ \cite[Expos\'e XXII, Corollaire 5.8.3]{SGA3}, which uniquely determines an isomorphism $T_1\cong T_2$. This isomorphism satisfies the cocycle condition for a third Borel subgroup, defining $T$ canonically for split $G$. The general case then follows from \'etale descent.

Write $\Lambda$ for the cocharacter lattice of $T$ and $\check{\Lambda}$ for its dual. Then $\Lambda$ forms part of the locally constant \'etale sheaf of based (reduced) root data defined by $G$:
\begin{equation}
\label{eq-root-data}
(\Delta\subset\Phi\subset\Lambda, \check{\Delta}\subset\check{\Phi}\subset\check{\Lambda}),\quad \Phi\cong\check{\Phi}.
\end{equation}
Indeed, \eqref{eq-root-data} is constructed from \'etale local splittings of $G$, which includes the data of a Killing couple identifying the split maximal torus of $G$ with $T$.

We also write $\Lambda_{\mathrm{sc}}$ for the sheaf of cocharacters of the universal Cartan $T_{\mathrm{sc}}$ of $G_{\mathrm{sc}}$. Then the canonical map $T_{\mathrm{sc}} \rightarrow T$ identifies $\Lambda_{\mathrm{sc}}$ with the span of $\Phi$.

Denote by $W$ the Weyl group of the root datum \eqref{eq-root-data}, viewed as a locally constant \'etale sheaf of finite groups.
\end{void}

\begin{void}
Suppose that $G$ has a Borel subgroup $B\subset G$, with unipotent radical $N\subset B$. Then we may restrict along $B(B)\rightarrow B(G)$ to obtain a morphism:
\begin{align}
\notag
\res_B : \underline{\Gamma}{}_e(BG, B^4A(1)) &\rightarrow \underline{\Gamma}{}_e(B(B), B^4A(1)) \\
&\cong \underline{\Gamma}{}_e(BT, B^4A(1))
\label{eq-restriction-along-borel}
\end{align}
where the isomorphism is due to the fact $B(N)\rightarrow S$ has vanishing higher direct image in \'etale cohomology.
\end{void}

\begin{rem}
The functor $\res_B$ \emph{depends} on the choice of the Borel subgroup. This dependency will be studied in more details in \S\ref{sect-dependence-on-borel} below.
\end{rem}

\begin{void}
\label{void-associated-quadratic-form}
Recall the morphism $\underline{\Gamma}{}_e(BT, B^4A(1)) \rightarrow \Sym^2(\Lambda)\otimes A(-1)$ from Theorem \ref{thm-classification-torus} associating a quadratic form $Q$ to every \'etale metaplecic cover of $T$. Its composition with $\res_B$ \eqref{eq-restriction-along-borel} defines a map:
\begin{equation}
\label{eq-quadratic-form-invariant}
\underline{\Gamma}{}_e(BG, B^4A(1)) \rightarrow \Sym^2(\Lambda)\otimes A(-1),\quad \mu\mapsto Q.
\end{equation}

It is independent of the choice of $B$, since the target $\Sym^2(\Lambda)\otimes A(-1)$ is a sheaf of discrete groupoids and every two choices of Borel subgroups are \'etale locally conjugate and we may appeal to the isomorphism \eqref{eq-borel-comparison-isomorphism} below.

In particular, \eqref{eq-quadratic-form-invariant} is a well-defined morphism for any reductive group scheme $G\rightarrow S$, without assuming the existence of a Borel subgroup. We refer to the image of $\mu$ under \eqref{eq-quadratic-form-invariant} as its \emph{associated} quadratic form.
\end{void}

\begin{void}
Define the subsheaf of \emph{strict} quadratic forms:
$$
\Quad(\Lambda, A(-1))_{\st} \subset \Sym^2(\Lambda)\otimes A(-1)
$$
to consist of sections $Q$ whose associated symmetric forms $b$ satisfy:
\begin{equation}
\label{eq-strict-W-invariance}
b(\alpha, \lambda) = \langle\check{\alpha}, \lambda\rangle Q(\alpha),\quad (\alpha\in\Delta, \lambda\in\Lambda).
\end{equation}

Denote by $\underline{\Gamma}{}_e(BT, B^4A(1))_{\st}$ the full subgroupoid of $\underline{\Gamma}{}_e(BT, B^4A(1))$ whose associated quadratic form $Q$ belongs to $\Quad(\Lambda; A(-1))_{\st}$.
\end{void}

\begin{rem}
\label{rem-strict-quadratic-forms}
The equality \eqref{eq-strict-W-invariance} can be seen as a strengthened ``$W$-invariance'' property. Indeed, we make the following observations:
\begin{enumerate}
	\item \eqref{eq-strict-W-invariance} implies $W$-invariance, as one sees from computing $Q(s_{\alpha}(\lambda))$ for the simple reflection $s_{\alpha}$ associated to $\alpha\in\Delta$. The converse is not true in general, as one sees in the example $G = \SL_2\times\mathbb G_m$ and $A = \{\pm 1\}$.
	
	However, if $A$ has no $2$-torsion, then the converse holds. Indeed, after multiplying \eqref{eq-strict-W-invariance} by $2$, the resulting equality is equivalent to $b(s_{\alpha}(\alpha), s_{\alpha}(\lambda)) = b(\alpha, \lambda)$.
	
	\item if $Q$ belongs to $\Sym^2(\Lambda)^W\otimes A(-1)$, then the equality \eqref{eq-strict-W-invariance} holds, according to the observation (1) for $A = \mathbb Z$. The converse is also not true in general, as one sees in the example $G = \SO_5$ and $A = \{\pm 1\}$.
	
	However, if $G$ is simply connected, i.e.~$\Phi$ spans $\Lambda$, then the converse holds. Indeed, the hypothesis implies that $(\Delta\subset\Phi\subset\Lambda, \check{\Delta}\subset\check{\Phi}\subset\check{\Lambda})$ admits a $W$-invariant decomposition into a direct sum of based irreducible reduced root data, and \eqref{eq-strict-W-invariance} implies that distinct summands are orthogonal under $b$. Hence the problem reduces to a single summand, where we have an isomorphism $\Quad(\Lambda, A(-1))_{\st} \cong A(-1)$ given by evaluating $Q$ at a short coroot.
\end{enumerate}
Furthermore, once the equality \eqref{eq-strict-W-invariance} holds for $\alpha\in\Delta$, it also holds for all $\alpha\in\Phi$, so the sheaf of strict quadratic forms is defined independently of the base of the root data \eqref{eq-root-data}.
\end{rem}

\begin{void}
\label{void-fundamental-group}
Denote by $\pi_1(G) := \Lambda/\Lambda_{\mathrm{sc}}$ the algebraic fundamental group of $G$, viewed as a locally constant \'etale sheaf of abelian groups on $S$. We shall construct a functor:
\begin{equation}
\label{eq-abelian-covers}
	\underline{\Maps}{}_{\mathbb Z}(\pi_1(G), B^2A) \rightarrow \underline{\Gamma}{}_e(BG, B^4A(1)),
\end{equation}
whose targets consists of \'etale metaplectic covers coming from ``abelianized cohomology'' in the sense of Borovoi \cite{MR1401491}.

Recall that the conjugation action of $G$ on $G_{\der}$ induces one on $G_{\mathrm{sc}}$ by functoriality of the simply connected cover, which we continue to call the \emph{conjugation action} of $G$ on $G_{\mathrm{sc}}$. In particular, the quotient stack $G/G_{\mathrm{sc}}$ inherits a monoidal structure.
\end{void}

\begin{rem}
\label{rem-borovoi-abelianization-description}
As a sheaf of monoidal groupoids, objects of $G/G_{\mathrm{sc}}$ are those of $G$ and for $g_1, g_2\in G$, a morphism $g_1\rightarrow g_2$ is a lift of $g_2g_1^{-1}$ to $G_{\mathrm{sc}}$.

The monoidal operation on $G/G_{\mathrm{sc}}$ carries objects $g_1$, $g_2$ to $g_1g_2$. On morphisms, it carries a lift $\tilde g$ of $g_2g_1^{-1}$ and a lift $\tilde g'$ of $g_2'g_1'{}^{-1}$ to the lift $(g_2\tilde g'g_2^{-1})\tilde g$ of $(g_2g_2')(g_1g_1')^{-1}$, where we have used the conjugation action of $G$ on $G_{\mathrm{sc}}$.
\end{rem}

\begin{void}
\label{void-borovoi-abelianization-isomorphism}
Let us construct a canonical isomorphism of monoidal stacks:
\begin{equation}
\label{eq-borovoi-abelianization-isomorphism}
T/T_{\mathrm{sc}}\cong G/G_{\mathrm{sc}}.
\end{equation}
(Note: the left-hand-side is isomorphic to $\pi_1(G)\otimes\mathbb G_m$.)
\end{void}

\begin{proof}[Construction]
This is a variant of \cite[Lemma 3.8.1]{MR1401491}. We first treat the case where $G$ splits. For any Killing couple $T_1\subset B_1\subset G$, we let \eqref{eq-borovoi-abelianization-isomorphism} be the map induced from $T_1\rightarrow G$, in view of the isomorphism $T\cong T_1$. The map $f_{B_1} : T_1/T_{1,\mathrm{sc}} \rightarrow G/G_{\mathrm{sc}}$ is an isomorphism of monoidal stacks, as $T_{1,\mathrm{sc}}$ is the preimage of $T$ along $G_{\mathrm{sc}}\rightarrow G$. Hence, we only need to treat its independence on the Killing couple (as a monoidal morphism).

Let $T_2\subset B_2\subset G$ be another Killing couple. Since Killing couples are parametrized by $G/T_1 \cong G_{\mathrm{sc}}/T_{1,\mathrm{sc}}$, \'etale locally on $S$ we may choose $\tilde g\in G_{\mathrm{sc}}$ which conjugates $T_1\subset B_1$ into $T_2\subset B_2$. There is a commutative square:
$$
\begin{tikzcd}[column sep = 1.5em]
	T_1/T_{1,\mathrm{sc}} \ar[r, "f_{B_1}"]\ar[d, "\alpha_{1,2}"] & G/G_{\mathrm{sc}} \ar[d, "\mathrm{int}_{\tilde g}"] \\
	T_2/T_{2, \mathrm{sc}} \ar[r, "f_{B_2}"] & G/G_{\mathrm{sc}}
\end{tikzcd}
$$
The left vertical arrow is induced from the canonical identifications $T_1\cong T_2$, $T_{1,\mathrm{sc}}\cong T_{2,\mathrm{sc}}$, which depend only on the class of $\tilde g$ in $G_{\mathrm{sc}}/T_{1,\mathrm{sc}}$. We shall construct a $2$-isomorphism between $\mathrm{int}_{\tilde g}$ and the identity endomorphism on $G/G_{\mathrm{sc}}$:
\begin{equation}
\label{eq-borovoi-abelianization-inner-automorphism-trivialization}
\mathrm{int}_{\tilde g} \cong \mathrm{id}_{G/G_{\mathrm{sc}}}
\end{equation}

The value of \eqref{eq-borovoi-abelianization-inner-automorphism-trivialization} at $g\in G$ is defined by the lift $\tilde g(g\tilde g^{-1}g^{-1})$ of $\mathrm{int}_{\tilde g}(g)g^{-1}$ to $G_{\mathrm{sc}}$ using the $G$-action on $G_{\mathrm{sc}}$ by conjugation. This lift satisfies the following properties:
\begin{enumerate}
	\item for $g$ coming from $G_{\mathrm{sc}}$, it agrees with the lift of $\mathrm{int}_{\tilde g}(g)g^{-1}$ given by lifting $g$;
	\item for $g\in T_1$ and $\tilde g\in T_{1,\mathrm{sc}}$, it agrees with the lift of $\mathrm{int}_{\tilde g}(g)g^{-1}$ given by the identity, using the equality $\mathrm{int}_{\tilde g}(g) = g$ in $G$.
	\item for $g_1, g_2\in G$, the lifts $\tilde g_1$ of $\mathrm{int}_{\tilde g}(g_1)g_1^{-1}$ and $\tilde g_2$ of $\mathrm{int}_{\tilde g}(g_2)g_2^{-1}$ satisfy the property that $(\mathrm{int}_{\tilde g}(g_1)\tilde g_2\mathrm{int}_{\tilde g}(g_1)^{-1})\tilde g_1$ agrees with the chosen lift of $\mathrm{int}_{\tilde g}(g_1g_2)(g_1g_2)^{-1}$.
\end{enumerate}

Property (1) shows that the isomorphism $g\cong \mathrm{int}_{\tilde g}(g)$ specified above for $g\in G$ defines a $2$-isomorphism \eqref{eq-borovoi-abelianization-inner-automorphism-trivialization}. Property (2) shows that the induced $2$-isomorphism:
\begin{equation}
\label{eq-borovoi-abelianization-independence}
f_{B_1} \cong f_{B_2} \circ \alpha_{1,2}
\end{equation}
depends only on the class of $\tilde g$ in $G_{\mathrm{sc}}/T_{1,\mathrm{sc}}$. Property (3) shows that \eqref{eq-borovoi-abelianization-independence} is a monoidal isomorphism, in view of the description in Remark \ref{rem-borovoi-abelianization-description}.

It remains to show that for a third Killing couple $T_3\subset B_3\subset G$, the $2$-isomorphisms $f_{B_1} \cong f_{B_2} \circ \alpha_{1,2}$ and $f_{B_2}\cong f_{B_3}\circ \alpha_{2,3}$ constructed above concatenate into the $2$-isomorphism $f_{B_1} \cong f_{B_3}\circ \alpha_{1,3}$. The desired statement being an equality, we may choose $\tilde g_1, \tilde g_2\in G_{\mathrm{sc}}$ which conjugate $T_1\subset B_1$ into $T_2\subset B_2$, respectively $T_2\subset B_2$ into $T_3\subset B_3$. This follows from the equality $\mathrm{int}_{\tilde g_2}\circ\mathrm{int}_{\tilde g_1} = \mathrm{int}_{\tilde g_2\tilde g_1}$ and the fact that \eqref{eq-borovoi-abelianization-inner-automorphism-trivialization} is compatible with compositions. The latter compatibility amounts to the equality:
$$
(\tilde g_2[\tilde g_1, g]\tilde g_2^{-1})[\tilde g_2, g] = [\tilde g_2\tilde g_1, g]
$$
for each $g\in G$, where $[-,-]$ is a shorthand for the commutator.

Having constructed the canonical isomorphism $T/T_{\mathrm{sc}} \cong G/G_{\mathrm{sc}}$ for a split reductive group $G$, the general case follows from \'etale descent.
\end{proof}

\begin{void}
The isomorphism $\check{\Lambda}\otimes B^2A \cong \underline{\Maps}{}_{\mathbb Z}(BT, B^4A(1))$ of Remark \ref{rem-tori-classification-linear-part} and its analogue for $\Lambda_{\mathrm{sc}}$ then define \eqref{eq-abelian-covers} as the following composition:
\begin{align*}
	\underline{\Maps}{}_{\mathbb Z}(\pi_1(G), B^2A) & \cong \underline{\Maps}{}_{\mathbb Z}(B(T/T_{\mathrm{sc}}), B^4A(1)) \\
	& \rightarrow \underline{\Gamma}{}_e(B(T/T_{\mathrm{sc}}), B^4A(1)) \\
	& \cong \underline{\Gamma}{}_e(B(G/G_{\mathrm{sc}}), B^4A(1)) \rightarrow \Gamma_e(BG, B^4A(1)),
\end{align*}
where the second isomorphism appeals to \eqref{eq-borovoi-abelianization-isomorphism}.

The following result is a consequence of the calculation of $H^4(BG)$ with torsion coefficients, which is supposedly well known.
\end{void}

\begin{prop}
\label{prop-reductive-classification}
The morphisms \eqref{eq-abelian-covers} and \eqref{eq-quadratic-form-invariant} determine a cofiber sequence:
\begin{equation}
\label{eq-reductive-classification-cofiber-sequence}
\underline{\Maps}{}_{\mathbb Z}(\pi_1(G), B^2A) \rightarrow \underline{\Gamma}{}_e(BG, B^4A(1)) \rightarrow \Quad(\Lambda, A(-1))_{\st}.
\end{equation}
\end{prop}
\begin{proof}
We first observe that the composition is indeed the zero map. Thus it remains to show that \eqref{eq-reductive-classification-cofiber-sequence} induces a long exact sequence on cohomology groups. Since the \'etale cohomology of $BG \rightarrow S$ commutes with arbitrary base change (\cite[Rappel 1.5.1]{MR1441006}), we may replace $S$ with the spectrum of $\mathbb C$.

Furthermore, we may replace $A$ by the constant sheaf of abelian groups $\frac{1}{N}\mathbb Z/\mathbb Z$, and using the short exact sequence:
$$
0 \rightarrow \frac{1}{N}\mathbb Z/\mathbb Z \rightarrow \mathbb Q/\mathbb Z \xrightarrow{\cdot N} \mathbb Q/\mathbb Z \rightarrow 0,
$$
the assertion follows from the analogous one for the coefficient group $\mathbb Q/\mathbb Z$. For a \emph{divisible} abelian group $A$ (e.g.~$\mathbb Q/\mathbb Z$), we have the calculation:
$$
H^i(BG, A(1)) \cong
\begin{cases}
	0 & i = 1;\\
	\Hom(\pi_1(G), A) & i = 2;\\
	\Ext^1(\pi_1(G), A) & i = 3;\\
	\Quad(\Lambda, A(-1))_{\st} & i = 4,
\end{cases}
$$
performed in the \emph{corrected version} of \cite[Appendix B]{MR3769731}. Note that these isomorphisms of \emph{op.cit.}~are defined by pulling back to the classifying stack of a chosen Borel subgroup, which agree with the maps induced from \eqref{eq-abelian-covers} and \eqref{eq-quadratic-form-invariant}.
\end{proof}

\begin{rem}
If $G$ is simply connected, Proposition \ref{prop-reductive-classification} asserts that $\underline{\Gamma}{}_e(BG, B^4A(1))$ is canonically equivalent to $\Quad(\Lambda, A(-1))_{\st}$, or $\Sym^2(\check{\Lambda})^W\otimes A(-1)$ according to Remark \ref{rem-strict-quadratic-forms}(2). In particular, $\underline{\Gamma}{}_e(BG, B^4A(1))$ is discrete for simply connected $G$.

On the other hand, if $G$ is a torus, then the cofiber sequence \eqref{eq-reductive-classification-cofiber-sequence} is identified with the cofiber sequence in \eqref{eq-second-theta-data-coefficients}.
\end{rem}

\begin{thm}
\label{thm-reductive-classification}
Suppose that $G\rightarrow S$ is a reductive group scheme equipped with a Borel subgroup $B\subset G$. Then \eqref{eq-restriction-along-borel} gives rise to a Cartesian diagram:
\begin{equation}
\label{eq-reductive-classification-square}
\begin{tikzcd}[column sep = 1.5em]
	\underline{\Gamma}{}_e(BG, B^4A(1)) \ar[r, "\res_B"]\ar[d] & \underline{\Gamma}{}_e(BT, B^4A(1))_{\st} \ar[d] \\
	\underline{\Gamma}{}_e(BG_{\mathrm{sc}}, B^4A(1)) \ar[r, "\res_{B_{\mathrm{sc}}}"] & \underline{\Gamma}{}_e(BT_{\mathrm{sc}}, B^4A(1))_{\st}
\end{tikzcd}
\end{equation}
where $B_{\mathrm{sc}}$ denotes the induced Borel subgroup of $G_{\mathrm{sc}}$.
\end{thm}
\begin{proof}
The square \eqref{eq-reductive-classification-square} is commutative by functoriality of the construction of $\res_B$. To show that it is Cartesian, it suffices to prove that the horizontal morphisms induce an isomorphism on fibers.

We apply Proposition \ref{prop-reductive-classification} to all four terms in \eqref{eq-reductive-classification-square}, so these fibers fit into a map of fiber sequences:
$$
\begin{tikzcd}[column sep = 1em]
	\Fib(\res_B) \ar[r]\ar[d] & \underline{\Maps}{}_{\mathbb Z}(\pi_1(G), B^2A) \ar[r]\ar[d] & \check{\Lambda}\otimes B^2A\ar[d] \\
	\Fib(\res_{B_{\mathrm{sc}}}) \ar[r] & 0 \ar[r] & \check{\Lambda}_{\mathrm{sc}}\otimes B^2A
\end{tikzcd}
$$
The right square is Cartesian by the definition of $\pi_1(G)$, implying that the leftmost vertical arrow is an isomorphism.
\end{proof}

\begin{rem}
\label{rem-reductive-classification-explicit}
Theorem \ref{thm-reductive-classification}, combined with Theorem \ref{thm-classification-torus}, gives a complete classification of \'etale metaplectic covers of reductive group schemes $G$ equipped with a Borel subgroup $B$.

Namely, it shows that any pointed morphism $\mu : BG \rightarrow B^4A(1)$ is uniquely determined by quadruple $(Q, F, h, \varphi)$ where:
\begin{enumerate}
	\item $Q$ is a strict $A(-1)$-valued quadratic form on $\Lambda$;
	\item $F : H^{(2)}(\Lambda) \rightarrow B^2A$ is a $\mathbb Z$-linear morphism;
	\item $h$ is an isomorphism between the restriction of $F$ to $\wedge^2(\Lambda)$ and the map determined by $Q$, see \S\ref{void-theta-data-explicit-description};
	\item $\varphi$ is an isomorphism between the restriction of $(F, h)$ to $\Lambda_{\mathrm{sc}}$ and the pair defined by the restriction of $Q$ to $\Lambda_{\mathrm{sc}}$ under $\res_{B_{\mathrm{sc}}}$.
\end{enumerate}
\end{rem}

\begin{rem}
If, instead of choosing a Borel subgroup $B$, we fix a maximal torus $T_1\subset G$. Then there is a Cartesian diagram analogous to \eqref{eq-reductive-classification-square}, where the horizontal morphisms are replaced by restrictions along $B(T_1)\rightarrow BG$, respectively $B(T_{1,\mathrm{sc}})\rightarrow BG$, for $T_{1,\mathrm{sc}}\subset G_{\mathrm{sc}}$ being the induced maximal torus. In view of the explicit description in Remark \ref{rem-reductive-classification-explicit}, this statement gives an analogue of \cite[Theorem 7.2]{MR1896177} in \'etale cohomology.
\end{rem}

\begin{void}
Recall from Remark \ref{rem-splitting-theta-data} that the inclusion $B^2A \rightarrow \underline{\Gamma}{}_e(B\mathbb G_m, B^4A(1))$ has a canonical splitting.

If $G$ is split, then its sheaf of based root data is constant. Hence we may view each $\alpha\in\Delta$ as a homomorphism $\mathbb G_m\rightarrow T$ defined globally over $S$. Pulling back along it and composing with the above splitting defines a functor:
\begin{equation}
\label{eq-pullback-along-simple-coroot}
	\bigoplus_{\alpha\in\Delta}\alpha^* : \underline{\Gamma}{}_e(BT, B^4A(1))_{\st} \rightarrow \bigoplus_{\alpha\in\Delta} A[2].
\end{equation}

When $G$ is equipped with a pinning, the classification Theorem \ref{thm-reductive-classification} can be reformulated in much more explicit terms.
\end{void}

\begin{cor}
\label{cor-reductive-classification-simple}
Suppose that $G\rightarrow S$ is a pinned split reductive group scheme. Then $\underline{\Gamma}{}_e(BG, B^4A(1))$ is canonically identified with the fiber of \eqref{eq-pullback-along-simple-coroot}.
\end{cor}

\begin{void}
In order to prove Corollary \ref{cor-reductive-classification-simple}, we need an observation about $\SL_2$ and its diagonal maximal torus $\mathbb G_m\subset \SL_2$, $a\mapsto \mathrm{diag}(a, a^{-1})$.

Being simply connected, $\underline{\Gamma}{}_e(B(\SL_2), B^4A(1))$ is equivalent to $A(-1)$, by evaluating the corresponding quadratic form on a coroot. Consider the \'etale metaplectic cover $\mu_a$ corresponding to $a\in A(-1)$.
\end{void}

\begin{lem}
\label{lem-restriction-sl2}
The restriction of $\mu_a$ along $\mathbb G_m\subset\SL_2$ is naturally identified with the image of $a$ under the map \tn{(1)} of \eqref{eq-second-theta-data-coefficients}.
\end{lem}
\begin{proof}
Indeed, there are equivalences:
\begin{align*}
	\underline{\Gamma}{}_e(B(\SL_2), B^4A(1)) & \cong \underline{\Gamma}{}_e(\SL_2, B^3A(1)) \\
	& \cong \underline{\Gamma}{}_e(\mathbb A^2\backslash\{0\}, B^3A(1)),
\end{align*}
where the second map is induced from the map $\SL_2\rightarrow\mathbb A^2\backslash\{0\}$ by acting on $(x,y) = (1, 0)$, so $\mathbb A^1\backslash\{0\}$ is pointed by $e = (1, 0)$. The \'etale metaplectic cover $\mu_a$ is thus defined by gluing the trivial sections of $B^3A(1)$ on the charts $x\neq 0$ and $y\neq 0$ along the section of $B^2A(1)$ over $\mathbb G_m\times\mathbb G_m$ defined by $a$.

The restriction of this section of $B^3A(1)$ along $\mathbb G_m\subset \SL_2\rightarrow \mathbb A^2\backslash\{0\}$ admits a canonical trivialization, since the image belongs to the chart $x\neq 0$.

Note that $\mathbb E_1$-monoidal morphisms $\mathbb G_m\rightarrow B^3A(1)$ equipped with a trivialization of the underlying pointed morphism are in bijection with $A(-1)$: the maps in two directions are given by (1) of \eqref{eq-second-theta-data-coefficients}, respectively taking cocycle which defines a bi-rigidified morphism $\mathbb G_m\times\mathbb G_m \rightarrow B^2A(1)$, or equivalently an element of $A(-1)$ (Proposition \ref{prop-torus-cocycle-covers}). The fact that the cocycle defined by $a\in A(-1)$ has quadratic form $a$ shows that the cocycle corresponding to the restriction of $\mu_a$ must be $a$.
\end{proof}

\begin{rem}
The proof of Lemma \ref{lem-restriction-sl2} also shows that the section of $B^3A(1)$ over $\SL_2$ underlying $\mu_a$ is nontrivial, i.e.~the induced $\mathbb E_1$-monoidal extension of $\SL_2$ by $B^2A(1)$ is not ``defined by a cocycle.'' This stands in contrast with the corresponding central extension of $\SL_2(F)$ over a local field $F$, which can be defined by a cocycle, see \cite{MR204422}.
\end{rem}

\begin{proof}[Proof of Corollary \ref{cor-reductive-classification-simple}]
In view of the Cartesian diagram \eqref{eq-reductive-classification-square}, it suffices to argue that the row below forms a fiber sequence:
$$
\begin{tikzcd}[column sep = 1.5em]
	& \check{\Lambda}_{\mathrm{sc}}\otimes B^2A \ar[d] \\
	\underline{\Gamma}{}_e(BG_{\mathrm{sc}}, B^4A(1)) \ar[r, "\res_{B_{\mathrm{sc}}}"] & \underline{\Gamma}{}_e(BT_{\mathrm{sc}}, B^4A(1))_{\st} \ar[r, "\bigoplus_{\alpha\in\Delta}\alpha^*"]\ar[d] & \bigoplus_{\alpha\in\Delta} B^2A.\\
	& \Quad(\Lambda_{\mathrm{sc}}, A(-1))_{\st}
\end{tikzcd}
$$

Since we already know that the column forms a fiber sequence (Theorem \ref{thm-classification-torus}) and the compositions given equivalences:
\begin{align*}
\check{\Lambda}_{\mathrm{sc}}\otimes B^2A &\cong \bigoplus_{\alpha\in\Delta}B^2A\\
\underline{\Gamma}{}_e(BG_{\mathrm{sc}}, B^4A(1)) &\cong \Quad(\Lambda_{\mathrm{sc}}, A(-1))_{\st}
\end{align*}
it only remains to construct a null-homotopy of the composition $(\bigoplus_{\alpha\in\Delta}\alpha^*)\circ \res_{B_{\mathrm{sc}}}$.

For this statement, we may work with an individual $\alpha$. Let $N_{\alpha}\subset G_{\mathrm{sc}}$ denote the root subgroup associated to $\alpha$. There exists a map $f_{\alpha} : \SL_2\rightarrow G_{\mathrm{sc}}$ inducing $\alpha$ on the maximal tori and sending the unipotent radical $N^+$ of the upper triangular Borel subgroup $B^+\subset \SL_2$ to $N_{\alpha}$. Furthermore, requiring $f_{\alpha}$ to match the standard pinning $N^+\cong \mathbb G_a$ with the given one on $N_{\alpha}$ determines it uniquely.

Therefore, we may identify $\alpha^* \circ \res_{B_{\mathrm{sc}}}$ with the restriction along:
$$
B(\mathbb G_m) \rightarrow B(\SL_2) \xrightarrow{f_{\alpha}} BG_{\mathrm{sc}},
$$
so the desired null-homotopy follows from Lemma \ref{lem-restriction-sl2}.
\end{proof}

\subsection{Dependence on $B$}
\label{sect-dependence-on-borel}

\begin{void}
\label{void-borel-comparison-isomorphism}
Suppose that $G\rightarrow S$ is a reductive group scheme. We keep the notations of \S\ref{void-reductive-group-notations} associated to $G$.

If $G$ is equipped with a Borel subgroup $B_1\subset G$, then we have a functor $\res_{B_1}$ as in \eqref{eq-restriction-along-borel}. If $B_2\subset G$ is another Borel subgroup and $g\in G$ is a section such that $gB_1g^{-1} = B_2$, then we shall construct an isomorphism of functors from $\underline{\Gamma}{}_e(BG, B^4A(1))$ to $\underline{\Gamma}{}_e(BT, B^4A(1))$:
\begin{equation}
\label{eq-borel-comparison-isomorphism}
F_g : \res_{B_1} \cong \res_{B_2}.
\end{equation}
\end{void}

\begin{proof}[Construction]
Inner automorphism by $g$ induces a commutative diagram:
$$
\begin{tikzcd}[column sep = 1.5em]
	G\ar[d, "\mathrm{int}_g"] & B_1 \ar[l]\ar[r]\ar[d, "\mathrm{int}_g"] & T_1 \ar[d, "\mathrm{int}_g"]\ar[r, "\cong"] & T \ar[d, "\cong"]\\
	G & B_2 \ar[l]\ar[r] & T_2 \ar[r, "\cong"] & T
\end{tikzcd}
$$
where $T_1$, $T_2$ denotes the maximal quotient tori of $B_1$, $B_2$. Hence it suffices to construct an isomorphism between $(\mathrm{int}_g)^*$ and the identity endofunctor on $\underline{\Gamma}{}_e(BG, B^4A(1))$.

Recall that a pointed morphism $\mu : BG \rightarrow B^4A(1)$ is equivalent to an $\mathbb E_1$-monoidal morphism $G \rightarrow B^3A(1)$. Denoting the latter by the same letter $\mu$, we have a commutative diagram of $\mathbb E_1$-monoidal stacks:
$$
\begin{tikzcd}[column sep = 1.5em]
	G \ar[d, "\mathrm{int}_g"] \ar[r, "\mu"] & B^3A(1) \ar[d, "\mathrm{int}_{\mu(g)}"] \\
	G \ar[r, "\mu"] & B^3A(1)
\end{tikzcd}
$$

Since the $\mathbb E_1$-monoidal structure on $B^3A(1)$ lifts to an $\mathbb E_{\infty}$-monoidal one, the automorphism $\mathrm{int}_{\mu(g)}$ is trivialized, giving the desired isomorphism $(\mathrm{int}_g)^*(\mu) \cong \mu$.
\end{proof}

\begin{rem}
\label{rem-borel-comparison-isomorphism-composition}
The isomorphism \eqref{eq-borel-comparison-isomorphism} is compatible with compositions in the following sense: given a third Borel subgroup $B_3\subset G$ with $g'\in G$ such that $g'B_2(g')^{-1} = B_3$, there is a canonical isomorphism between $F_{g'}\circ F_g$ and $F_{g'g}$. For a fourth Borel subgroup $B_4\subset G$ with $g''\in G$ such that $g'' B_3(g'')^{-1} = B_4$, a cocycle condition is satisfied.
\end{rem}

\begin{void}
Suppose that $B = B_1 = B_2$ and $g \in B$. The isomorphism $F_g$ \eqref{eq-borel-comparison-isomorphism} is in general not the identity automorphism of $\res_B$.

For $\mu \in \underline{\Gamma}{}_e(BG, B^4A(1))$, the association $g \mapsto F_g(\mu)$ defines a pointed morphism from $B$ to the sheaf of automorphisms of $\res_B(\mu)$. By Theorem \ref{thm-classification-torus}, the latter is canonically identified with $\check{\Lambda}\otimes A[1]$, so we obtain a pointed morphism:
\begin{equation}
\label{eq-borel-equivariance-morphism}
B \rightarrow \check{\Lambda}\otimes A[1].
\end{equation}

By the vanishing of \'etale cohomology of $N\rightarrow S$ and the calculation of \'etale cohomology of $T\rightarrow S$, pointed morphisms $B\rightarrow\check{\Lambda}\otimes A[1]$ are parametrized by $\check{\Lambda}^{\otimes 2}\otimes A(-1)$ (see the proof of Lemma \ref{lem-birigidified-morphism-classification}). In particular, associating \eqref{eq-borel-equivariance-morphism} to $\mu$ defines a map:
\begin{equation}
\label{eq-borel-equivariance-obstruction}
\underline{\Gamma}{}_e(BG, B^4A(1)) \rightarrow \check{\Lambda}^{\otimes 2}\otimes A(-1).
\end{equation}

It may be viewed as the obstruction of the isomorphism $F_g$ to depend \emph{only} on the Borel subgroups $B_1$, $B_2$, as opposed to the section $g\in G$.
\end{void}

\begin{lem}
\label{lem-borel-equivariance-bostruction}
The image of $\mu$ under \eqref{eq-borel-equivariance-obstruction} is given by the symmetric form $b$ associated to its quadratic form $Q$.
\end{lem}
\begin{proof}
Since the target of \eqref{eq-borel-equivariance-obstruction} is discrete, we may work \'etale locally on $S$ and choose a splitting of $B_1\rightarrow T_1$, so $T$ is identified with a subgroup of $G$. The functor $\res_B$ is canonically identified with $\res_T$: restriction along $BT \rightarrow BG$. The pointed morphism \eqref{eq-borel-equivariance-morphism} may also be calculated after pre-composing with $T\rightarrow B$.

For a section $t\in T$, the automorphism $F_t(\mu)$ of $\res_T(\mu)$ is given by the following loop in  $\underline{\Gamma}{}_e(BT, B^4A(1))$, or equivalently in $\underline{\Maps}{}_{\mathbb E_1}(T, B^3A(1))$:
\begin{equation}
\label{eq-borel-equivariance-obstruction-loop}
\begin{tikzcd}[column sep = 1.5em]
	\res_T(\mathrm{int}_t^*\mu) \ar[r, "\cong"] & \res_T(\mu) \ar[d, "\cong"] \\
	\mathrm{int}_t^*\res_T(\mu) \ar[u, "\cong"] & \res_T(\mu) \ar[l, "\cong"]
\end{tikzcd}
\end{equation}
where the top isomorphism comes from identification $\mathrm{int}_t^*(\mu)\cong\mu$ in $\underline{\Gamma}{}_e(BG, B^4A(1))$ constructed in \S\ref{void-borel-comparison-isomorphism} and the bottom isomorphism comes from the commutativity of $T$.

The loop \eqref{eq-borel-equivariance-obstruction-loop}, evaluated at a section $t_1\in T$, is precisely the commutator of $\res_T(\mu) : T \rightarrow B^3A(1)$ evaluated at the sections $t, t_1\in T$, so the claim follows from Corollary \ref{cor-commutator-symmetric-form}.
\end{proof}

\begin{void}
Suppose that $G\rightarrow S$ is a reductive group scheme (without assuming the existence of a Borel subgroup). For a strict quadratic form $Q \in \Quad(\Lambda, A(-1))_{\st}$, we denote by:
$$
\underline{\Gamma}{}_e(BG, B^4A(1))_Q \subset \underline{\Gamma}{}_e(BG, B^4A(1)).
$$
the full subgroupoid of \'etale metaplectic covers of $G$ with associated quadratic form $Q$ (in the sense of \S\ref{void-associated-quadratic-form}).

On the other hand, we may let $\Lambda^{\sharp}\subset\Lambda$ denote the kernel of the symmetric form $b$ associated to $Q$. It defines a torus $T^{\sharp} := \Lambda^{\sharp}\otimes\mathbb G_m$ equipped with an isogeny to $T$.
\end{void}

\begin{void}
\label{void-canonical-morphism-to-sharp-torus}
We shall construct a canonical morphism:
\begin{equation}
\label{eq-canonical-morphism-to-sharp-torus}
\res_{T^{\sharp}} : \underline{\Gamma}{}_e(BG, B^4A(1))_Q \rightarrow \underline{\Maps}{}_{\mathbb E_{\infty}}(BT^{\sharp}, B^4A(1)).
\end{equation}
\end{void}

\begin{proof}[Construction]
We first work \'etale locally on $S$ and assumes the existence of a Borel subgroup $B_1\subset G$. In this case, we define $\res_{T^{\sharp}}$ as the composition of $\res_{B_1}$ and the restriction along $B(T^{\sharp})\rightarrow BT$:
\begin{align}
\notag
	\underline{\Gamma}{}_e(BG, B^4A(1))_Q &\subset \underline{\Gamma}{}_e(BG, B^4A(1)) \\
\label{eq-restriction-to-sharp-torus-borel}
	&\xrightarrow{\res_{B_1}} \underline{\Gamma}{}_e(BT, B^4A(1)) \rightarrow \underline{\Gamma}{}_e(BT^{\sharp}, B^4A(1)).
\end{align}
Since the image of this composition has vanishing symmetric form, it belongs to the full subgroupoid of $\mathbb E_{\infty}$-monoidal morphisms $BT^{\sharp} \rightarrow B^4A(1)$ by Proposition \ref{prop-commutative-covers}.

The key point to address is the independence of the Borel subgroup. For two Borel subgroups $B_1, B_2\subset G$ and a section $g\in G$ such that $gB_1g^{-1} = B_2$, we have the isomorphism:
\begin{equation}
\label{eq-borel-comparison-isomorphism-sharp-cover}
F_g : \res_{B_1} \cong \res_{B_2}
\end{equation}
from \S\ref{void-borel-comparison-isomorphism}. We claim that the composition of $F_g$ with the restriction along $BT^{\sharp}\rightarrow BT$ depends only on $B_1$, $B_2$ (and not on the section $g\in G$).

Indeed, any other section $g'\in G$ such that $g'B_1(g')^{-1} = B_2$ differs from $g$ by an element of $B_1$. By the isomorphism $F_{gg'}\cong F_g\circ F_{g'}$ (see Remark \ref{rem-borel-comparison-isomorphism-composition}), it suffices to prove that for $B = B_1 = B_2$ and $g\in B$, the composition of $F_g$ with the restriction along $BT^{\sharp}\rightarrow BT$ is the identity. For this statement, we recall that as an automorphism of $\res_B(\mu)$, the section:
$$
F_g(\mu) \in \check{\Lambda}\otimes A[1]
$$
is the image of $g\in B$ under \eqref{eq-borel-equivariance-morphism}, which is in turn given by the symmetric form associated to $Q$ (Lemma \ref{lem-borel-equivariance-bostruction}). This section restricts to zero in $\check{\Lambda}^{\sharp}\otimes A[1]$, as desired.

Therefore, for two Borel subgroups $B_1, B_2\subset G$, we have constructed a canonical identifications between the compositions \eqref{eq-restriction-to-sharp-torus-borel} defined by $B_1$, $B_2$. The coherence data for three Borel subgroups and the cocycle condition for a fourth one follow from the corresponding facts about the isomorphism \eqref{eq-borel-comparison-isomorphism-sharp-cover}, see Remark \ref{rem-borel-comparison-isomorphism-composition}.
\end{proof}

\subsection{Covers of $\pi_1(G)\otimes\mathbb G_m$}
\label{sect-abelian-covers}

\begin{void}
Let $G\rightarrow S$ be a reductive group scheme. We keep the notations of \S\ref{void-reductive-group-notations} and recall the canonical morphism of monoidal stacks constructed in \S\ref{void-borovoi-abelianization-isomorphism}:
\begin{equation}
\label{eq-abelianization-map}
	G \rightarrow \pi_1(G)\otimes\mathbb G_m \cong T/T_{\mathrm{sc}}.
\end{equation}

It is surjective in the \'etale topology, with fiber canonically identified with $G_{\mathrm{sc}}$. If $G_{\der}$ is simply connected, then $\pi_1(G)\otimes\mathbb G_m$ is identified with the maximal quotient torus (i.e.~coradical) of $G$.

The following result, combined with Proposition \ref{prop-reductive-classification} for $G_{\mathrm{sc}}$, identifies ``\'etale metaplectic covers'' of $T/T_{\mathrm{sc}}$, i.e.~rigidified sections of $B^4A(1)$ over $B(T/T_{\mathrm{sc}})$, as those of $G$ characterized by the vanishing of the associated quadratic form on $\Lambda_{\mathrm{sc}}$.
\end{void}

\begin{prop}
\label{prop-abelian-covers}
Pulling back along \eqref{eq-abelianization-map} defines a fiber sequence:
\begin{equation}
\label{eq-abelian-covers-descent-sequence}
\underline{\Gamma}{}_e(B(T/T_{\mathrm{sc}}), B^4A(1)) \rightarrow \underline{\Gamma}{}_e(BG, B^4A(1)) \rightarrow \underline{\Gamma}{}_e(BG_{\mathrm{sc}}, B^4A(1)).
\end{equation}
\end{prop}
\begin{proof}
The last term $\underline{\Gamma}{}_e(BG_{\mathrm{sc}}, B^4A(1))$ is discrete and the composition is evidently zero. Thus the construction of \eqref{eq-abelian-covers-descent-sequence} requires no additional data and can be treated \'etale local on $S$.

By Theorem \ref{thm-reductive-classification}, it suffices to identify $\underline{\Gamma}{}_e(B(T/T_{\mathrm{sc}}), B^4A(1))$ as the fiber of:
\begin{equation}
\label{eq-restriction-to-simply-connected-torus-strict}
\underline{\Gamma}{}_e(BT, B^4A(1))_{\st} \rightarrow \underline{\Gamma}{}_e(BT_{\mathrm{sc}}, B^4A(1))_{\st},
\end{equation}
under the evident map. Since a strict quadratic form on $\Lambda$ which vanishes on $\Lambda_{\mathrm{sc}}$ factors through $\pi_1(G)$, the fiber of \eqref{eq-restriction-to-simply-connected-torus-strict} is also the full subgroupoid of the fiber of:
\begin{equation}
\label{eq-restriction-to-simply-connected-torus}
\underline{\Gamma}{}_e(BT, B^4A(1)) \rightarrow \underline{\Gamma}{}_e(BT_{\mathrm{sc}}, B^4A(1)),
\end{equation}
characterized by the property of $Q$ factoring through $\pi_1(G)$.

Since $B(T/T_{\mathrm{sc}})$ is the quotient of $BT$ by $BT_{\mathrm{sc}}$, we may identify pointed sections of $B^4A(1)$ over $B(T/T_{\mathrm{sc}})$ as a cosimplicial limit:
\begin{equation}
\label{eq-abelian-cover-cofiber-sequence}
\begin{tikzcd}[column sep = 0.5em]
	& \lim_{[n]} (\check{\Lambda}\oplus\check{\Lambda}_{\mathrm{sc}}^{\oplus n})\otimes B^2A\ar[d] \ar[r, phantom, "\cong"] & \underline{\Maps}{}_{\mathbb Z}(\pi_1(G), B^2A) \\
	\underline{\Gamma}{}_e(B(T/T_{\mathrm{sc}}), B^4A(1)) \ar[r, phantom, "\cong"] & \lim_{[n]} \underline{\Gamma}{}_e(BT\times BT_{\mathrm{sc}}^{\times n}, B^4A(1))\ar[d] \\
	& \lim_{[n]}\Sym^2(\check{\Lambda}\oplus\check{\Lambda}_{\mathrm{sc}}^{\oplus n})\otimes A(-1) \ar[r, phantom, "\cong"] & \Quad(\pi_1(G), A(-1))
\end{tikzcd}
\end{equation}
where the vertical triangle is that of \eqref{eq-second-theta-data-coefficients}. The identification of the top limit follows from the fact that the complex $[\check{\Lambda} \xrightarrow{d} \check{\Lambda}\oplus\check{\Lambda}_{\mathrm{sc}}\xrightarrow{d}\cdots]$ is equivalent to its subcomplex $[\check{\Lambda}\rightarrow \check{\Lambda}_{\mathrm{sc}}]$ of nondegenerate cochains, which is in turn isomorphic to the $\mathbb Z$-linear dual of $\pi_1(G)$. The identification of the bottom limit follows the fact that a quadratic form on $\Lambda$ whose two pullbacks to $\Lambda\oplus\Lambda_{\mathrm{sc}}$ is precisely one that factors through $\pi_1(G)$.

The vertical sequence in \eqref{eq-abelian-cover-cofiber-sequence} is a cofiber sequence because $\underline{\Maps}{}_{\mathbb Z}(\pi_1G, B^2A)$ has vanishing $H^1$. Comparing it with the canonical map from $\underline{\Gamma}{}_e(B(T/T_{\mathrm{sc}}), B^4A(1))$ to the fiber of \eqref{eq-restriction-to-simply-connected-torus} yields the desired conclusion.
\end{proof}

\begin{rem}
The proof of Proposition \ref{prop-abelian-covers} yields another statement. For any sheaf of finitely generated abelian groups $\Gamma$, there is a cofiber sequence:
\begin{equation}
\label{eq-general-abelian-group-cofiber-sequence}
\underline{\Maps}{}_{\mathbb Z}(\Gamma, B^2A) \rightarrow \underline{\Gamma}{}_e(B(\Gamma\otimes\mathbb G_m), B^4A(1)) \rightarrow \Quad(\Gamma, A(-1)).
\end{equation}
The cofiber sequence in \eqref{eq-second-theta-data-coefficients} is recovered as the special case where $\Gamma$ is torsion-free.

For $\Gamma = \pi_1(G)$, we note that \eqref{eq-abelian-covers-descent-sequence} and \eqref{eq-general-abelian-group-cofiber-sequence} combine into the cofiber sequence \eqref{eq-reductive-classification-cofiber-sequence}, which can be informally summarized as follows: the \'etale metaplectic covers with vanishing quadratic form on $\Lambda$ not only desends to $\pi_1(G)\otimes\mathbb G_m$, but is also of ``abelian nature'' there.
\end{rem}

\begin{void}
Suppose that $P\subset G$ is a parabolic subgroup with unipotent radical $N_P$ and Levi quotient $M$. Since $B(N_P)\rightarrow S$ has vanishing \'etale cohomology in degrees $\ge 1$, \'etale metaplectic covers of $P$ are equivalent to those of $M$. Thus, restriction along $P$ defines a functor:
$$
\underline{\Gamma}{}_e(BG, B^4A(1)) \rightarrow \underline{\Gamma}{}_e(BM, B^4A(1)),\quad \mu\mapsto \mu_M.
$$

In the study of Eisenstein series for covering groups, the following question arises naturally: what are the ``interesting'' metaplectic covers of $G$ whose restriction to $P$ descends to $\pi_1(M)\otimes\mathbb G_m$? (I thank Laurent Clozel for posing this question to the author.)
\end{void}

\begin{cor}
\label{cor-canonical-splittings}
Suppose that there exists an \'etale metaplectic cover $\mu$ of $G$ which does not descend to $\pi_1(G)\otimes\mathbb G_m$, but $\mu_M$ descends to $\pi_1(M)\otimes\mathbb G_m$. Then at least one of the following statements hold:
\begin{enumerate}
	\item $A(-1)$ has $2$-torsion and $G_{\mathrm{sc}}$ contains a factor of type $B_n$, $C_n$, or $F_4$;
	\item $A(-1)$ has $3$-torsion and $G_{\mathrm{sc}}$ contains a factor of type $G_2$.
\end{enumerate}
\end{cor}
\begin{proof}
By Proposition \ref{prop-abelian-covers}, the descent properties of $\mu$, respectively $\mu_M$, are of \'etale local nature on $S$. Combined with Proposition \ref{prop-reductive-classification}, we see that they are controlled by the vanishing of the associated quadratic form $Q$ on simple coroots.

For $Q$ to vanish on the simple coroots contained in $M$ but not on all simple coroots of $G$, the corresponding Dynkin diagram must be non-simply laced and some $Q(\alpha)\neq 0$ ($\alpha\in\Delta$) must be annihilated by the ratio of the square length of the long and short coroots.
\end{proof}

\begin{rem}
For $G = \Sp_{2n}$ and $A = \{\pm 1\}$ over a local field $F$ of characteristic $\neq 2$, the metaplectic double cover of $\Sp_{2n}(F)$ is defined by the \'etale metaplectic cover corresponding to the $\mathbb Z/2$-valued quadratic form $Q$ taking value $1$ on a short coroot.

It follows from a classical calculation that when restricted to the Siegel parabolic, the metaplectic cover descends along $\det : \GL_n(F)\rightarrow F^{\times}$, see \cite[Corollary 5.5]{MR1197062}.

Proposition \ref{prop-abelian-covers} lifts this statement to the level of geometry: it asserts that the corresponding \'etale metaplectic cover already descends to $\mathbb G_m$. The resulting cover of $\mathbb G_m$ may be identified by restriction along the unique short simple coroot $\alpha$ of $\Sp_{2n}$:
$$
\begin{tikzcd}
	\mathbb G_m\ar[d] \ar[r, "e_n"] & \GL_n \ar[r, "\det"]\ar[d] & \mathbb G_m \\
	\SL_2 \ar[r, "f_{\alpha}"] & \Sp_{2n}
\end{tikzcd}
$$
where $e_n$ is given by the action on the last basis vector, $f_{\alpha}$ is the map corresponding to $\alpha$, and $\mathbb G_m\subset\SL_2$, $\GL_n\subset\Sp_{2n}$ are the Levi subgroups. Thus the induced cover of $\mathbb G_m$ is defined by the cocycle $\mathbb Z\otimes\mathbb Z\rightarrow\mathbb Z/2$, $1\otimes 1\mapsto 1$ by Lemma \ref{lem-restriction-sl2}.

We also note that the integral parametrization of metaplectic covers, by central extension by $\underline K_2$ for example, does not see this phenomenon.
\end{rem}

\begin{rem}
\label{rem-z-extension}
For $S$ the spectrum of a local field $F$, it is possible to describe covering groups of $G(F)$ coming from \'etale metaplectic covers of $\pi_1(G)\otimes\mathbb G_m$ with the help of a \emph{$z$-extension}, i.e.~a central extension:
$$
1\rightarrow T_2\rightarrow G'\rightarrow G\rightarrow 1
$$
of reductive groups satisfying the following properties:
\begin{enumerate}
	\item the map $G'\rightarrow G$ induces the simply connected cover $G_{\mathrm{sc}} \rightarrow G_{\der}$ upon taking derived subgroups;
	\item $T_2$ is a quasi-trivial torus.
\end{enumerate}

Letting $T_1 := G'/G'_{\der}$, we have an identification $T_1/T_2 \cong \pi_1(G)\otimes\mathbb G_m$ of monoidal stacks. Thus any \'etale metaplectic cover $\mu$ of $\pi_1(G)\otimes\mathbb G_m$ defines one for $T_1$, with equivariance structure against the $T_2$-action. Upon evaluation at $\Spec(F)$ (see \S\ref{sect-metaplectic-cover-local}), we obtain a central extension of topological groups equipped with a splitting:
$$
\begin{tikzcd}[column sep = 1.5em]
	& & & T_2(F) \ar[dl]\ar[d] \\
1 \ar[r] & H_0(F, A) \ar[r] & \tilde T_1\ar[r] & T_1(F) \ar[r] & 1
\end{tikzcd}
$$
whose image is \emph{central} in $\tilde T_1$. Indeed, the centrality assertion follows from the equivariance structure $\tilde T_1\times T_2(F) \cong \act^*\tilde T_1$ (for $\act : T_1(F)\times T_2(F)\rightarrow T_1(F)$ the action map) being a group isomorphism.

Pulling back the extension $\tilde T_1$ along $G'(F)\rightarrow T_1(F)$ and taking the quotient by $T_2(F)$, we obtain a central extension $\tilde G$ of $G(F)$ by $H_0(F, A)$, using the vanishing of $H^1(F, T_2)$. This $\tilde G$ is the covering group associated to the pullback of $\mu$ along $G \rightarrow\pi_1(G)\otimes\mathbb G_m$.
\end{rem}

\subsection{Metaplectic center}
\label{sect-metaplectic-center}

\begin{void}
Suppose that $G\rightarrow S$ is a reductive group scheme. Let $Z_G$ denote its center and $Z\subset Z_G$ its maximal torus. Equivalently, $Z$ is the radical torus of $G$. We write $\Lambda_Z$ for its sheaf of cocharacters.

The map $Z\times G \rightarrow G$, $(z, g)\mapsto zg$ being a group homomorphism, it induces an action of the monoidal stack $B(Z)$ on $BG$.
\end{void}

\begin{void}
Given an \'etale metaplectic cover $\mu$ of $G$ with associated quadratic form $Q$, we define an isogeny $Z^{\sharp} \rightarrow Z$ as follows. The symmetric form $b$ associated to $Q$ induces a pairing:
\begin{equation}
\label{eq-symmetric-form-center}
\Lambda_Z \otimes \pi_1(G) \rightarrow A(-1),\quad \lambda_1,\lambda_2\mapsto b(\lambda_1,\lambda_2)
\end{equation}
as $\Lambda_Z$ pairs trivially with $\Lambda_{\mathrm{sc}}$ according to the formula \eqref{eq-strict-W-invariance}. The kernel of this pairing is a sublattice $\Lambda_{Z^{\sharp}}\subset\Lambda_Z$ with finite cokernel, so it defines the isogeny $Z^{\sharp} \rightarrow Z$.

Let $\mu_{Z^{\sharp}}$ denote the restriction of $\mu$ along $B(Z)\rightarrow BG$. By Proposition \ref{prop-commutative-covers}, $\mu_{Z^{\sharp}}$ has the canonical structure of an $\mathbb E_1$-monidal (in fact $\mathbb E_{\infty}$-monoidal) morphism $B(Z^{\sharp}) \rightarrow B^4A(1)$.

The goal of this section is to establish the following result, whose proof gives a geometric interpretation of the pairing \eqref{eq-symmetric-form-center}.
\end{void}

\begin{prop}
\label{prop-metaplectic-center}
With respect to the $B(Z^{\sharp})$-action on $BG$, the pointed morphism $\mu : BG \rightarrow B^4A(1)$ has a canonical $B(Z^{\sharp})$-equivariance structure against the $\mathbb E_1$-monoidal character $\mu^{\sharp} : B(Z^{\sharp}) \rightarrow B^4A(1)$.
\end{prop}

\begin{void}
In concrete terms, the assertion of Proposition \ref{prop-metaplectic-center} means that along the simplicial diagram encoding the $B(Z^{\sharp})$-action on $BG$:
$$
\begin{tikzcd}
	\cdots \ar[r, shift left = 1.5ex] \ar[r, shift left = 0.5ex] \ar[r, shift left = -0.5ex] \ar[r, shift left = -1.5ex] & B(Z^{\sharp})^{\times 2}\times BG \ar[r, shift left = 1.5ex, "m\times\id"]\ar[r, shift left = -0.5ex, "\id\times\act"] \ar[r, shift left = -1.5ex, swap, "\pr"] & B(Z^{\sharp})\times BG \ar[r, shift left = 0.5ex, "\act"]\ar[r, shift left = -0.5ex, swap, "\pr"] & BG,
\end{tikzcd}
$$
we have an isomorphism of rigidified sections of $B^4A(1)$ over $B(Z^{\sharp})\times BG$:
\begin{equation}
\label{eq-metaplectic-center-equivariance}
\act^*(\mu) \cong \mu_{Z^{\sharp}}\boxtimes \mu,
\end{equation}
and a $2$-isomorphism witnessing the commutativity of:
\begin{equation}
\label{eq-metaplectic-center-cocycle-diagram}
\begin{tikzcd}[column sep = 1em]
	(m\times \id)^*\act^*(\mu) \ar[r, "\cong"]\ar[d, "\cong"] & (\mu_{Z^{\sharp}}\boxtimes \mu_{Z^{\sharp}})\boxtimes\mu \ar[d, "\cong"] \\
	(\id\times\act^*)\act^*(\mu) \ar[r, "\cong"] & \mu_{Z^{\sharp}}\boxtimes(\mu_{Z^{\sharp}}\boxtimes\mu)
\end{tikzcd}
\end{equation}
subject to the natural cocycle condition over $B(Z^{\sharp})^{\times 3}\times BG$.
\end{void}

\begin{rem}
If $S$ is the spectrum of a local field $F$, then $\mu$ defines a topological covering group $\tilde G$ of $G(F)$ (or $G(F)^0$ if $F$ is real). Denote by $\tilde Z$ its restriction to $Z^{\sharp}(F)$. The assertion of Proposition \ref{prop-metaplectic-center} implies that the image of $\tilde Z$ is central in $\tilde G$, using the same observation as in Remark \ref{rem-z-extension}.
\end{rem}

\begin{void}
\label{void-birigidified-morphism-torus-reductive}
In order to prove Proposition \ref{prop-metaplectic-center}, we first observe that for any torus $T\rightarrow S$ with sheaf of cocharacters $\Lambda_T$, the groupoid of bi-rigidified morphisms:
$$
BT \times BG \rightarrow B^4A(1)
$$
is equivalent to discrete abelian group of pairings $\Lambda_T\times \pi_1(G) \rightarrow A(-1)$.

Indeed, this assertion follows from the same proof as Lemma \ref{lem-birigidified-morphism-classification}, where we use the fiber sequence \eqref{eq-reductive-classification-cofiber-sequence} to identify the neutral component of $\underline{\Gamma}{}_e(BG, B^4A(1))$.
\end{void}

\begin{proof}[Proof of Proposition \ref{prop-metaplectic-center}]
We observe that $\act^*(\mu) \otimes (\mu_{Z^{\sharp}}\boxtimes\mu)^{\otimes -1}$ admits a natural structure as a bi-rigidified morphism $B(Z^{\sharp})\times BG \rightarrow B^4A(1)$. In order to construct the isomorphism \eqref{eq-metaplectic-center-equivariance}, it suffices to show that the corresponding pairing:
\begin{equation}
\label{eq-metaplectic-center-equivariance-pairing}
\Lambda_{Z^{\sharp}} \otimes \pi_1(G) \rightarrow A(-1)
\end{equation}
vanishes. The statement being of \'etale local nature on $S$, we may split $G$ as well as fix a Borel subgroup containing a split maximal torus $T_1\subset B_1\subset G$. Then $T_1$ is isomorphic to the universal Cartan of $G$ and the pairing \eqref{eq-metaplectic-center-equivariance-pairing} may be calculated after restriction along $\Lambda_{T_1}\rightarrow \pi_1(G)$. Proposition \ref{prop-pointed-morphism-quadratic} then shows that \eqref{eq-metaplectic-center-equivariance-pairing} is the restriction of \eqref{eq-symmetric-form-center} along $\Lambda_{Z^{\sharp}}\rightarrow\Lambda_Z$, which vanishes by the definition of $\Lambda_{Z^{\sharp}}$.

To construct the $2$-isomorphism \eqref{eq-metaplectic-center-cocycle-diagram}, we note that all four terms are naturally isomorphic over $e\times BG$, $B(Z^{\sharp})^{\times 2}\times e$, and these isomorphisms are compatible over $e\times e$. By the discreteness of bi-rigidified morphisms $B(Z^{\sharp})^{\times 2}\times BG \rightarrow B^4A(1)$, any isomorphism:
$$
(m\times\id)^*\act^*(\mu) \cong \mu_{Z^{\sharp}}\boxtimes(\mu_{Z^{\sharp}}\boxtimes\mu)
$$
compatible with the given ones, if exists, is necessarily unique. The same consideration yields the cocycle condition over $B(Z^{\sharp})^{\times 3}\times BG$.
\end{proof}

\subsection{The $(T_{\ad}, T_{\mathrm{sc}})$-commutator}
\label{sect-adjoint-equivariance}

\begin{void}
Suppose that $G\rightarrow S$ is a reductive group scheme. Write $G_{\ad}$ for the quotient $G/Z_G$ and $T_{\ad}$ its universal Cartan with sheaf of cocharacters $\Lambda_{\ad}$. The map $G\rightarrow G_{\ad}$ induces a natural map $\Lambda\rightarrow \Lambda_{\ad}$, which is generally not surjective.

On the other hand, the conjugation action of $G$ on itself factors through $G_{\ad}$. By the functoriality of the simply connected cover, we also obtain a $G_{\ad}$-action on $G_{\mathrm{sc}}$.
\end{void}

\begin{void}
Suppose that $\mu$ is an \'etale metaplectic cover of $G$ with associated quadaratic form $Q$ and symmetric form $b$. Since $Q$ is strict, the restriction of $b$ to $\Lambda\otimes\Lambda_{\mathrm{sc}}$ extends to a bilinear form according to the formula:
\begin{equation}
\label{eq-adjoint-simply-connected-pairing}
\Lambda_{\ad}\otimes \Lambda_{\mathrm{sc}} \rightarrow A(-1),\quad \lambda, \alpha\mapsto Q(\alpha)\langle\check{\alpha},\lambda\rangle,
\end{equation}
where $\alpha$ is any simple coroot. Here, the bracket denotes the natural duality pairing between $\Lambda_{\ad}$ and the root lattice.

The goal of this section is interpret \eqref{eq-adjoint-simply-connected-pairing} as a kind of ``commutator pairing'' between $T_{\ad}$ and $T_{\mathrm{sc}}$, arising from $G_{\ad}$-action on $G_{\mathrm{sc}}$.

We write $\mu_{G_{\mathrm{sc}}}$ for the restriction of $\mu$ to $G_{\mathrm{sc}}$, viewed as an $\mathbb E_1$-monoidal morphism $G_{\mathrm{sc}} \rightarrow B^3A(1)$.
\end{void}

\begin{void}
We assume the existence of a Borel subgroup $B\subset G$, which we fix from now on. The induced Borel subgroup of $G_{\ad}$ is written as $B_{\ad}$. The $B_{\ad}$-action on $G_{\mathrm{sc}}$ induces two commutative diagrams:
\begin{equation}
\label{eq-adjoint-group-action-diagram}
\begin{tikzcd}[column sep = 1.5em]
	B_{\ad}\times G_{\mathrm{sc}} \ar[d, shift left = 0.5ex, "\act"]\ar[d, shift left = -0.5ex, swap, "\pr"] & B_{\ad}\times B_{\mathrm{sc}} \ar[l]\ar[r] \ar[d, shift left = 0.5ex, "\act"]\ar[d, shift left = -0.5ex, swap, "\pr"] & T_{\ad}\times T_{\mathrm{sc}} \ar[d, shift left = 0.5ex, "\act"]\ar[d, shift left = -0.5ex, swap, "\pr"] \\
	G_{\mathrm{sc}} & B_{\mathrm{sc}} \ar[l]\ar[r] & T_{\mathrm{sc}}
\end{tikzcd}
\end{equation}

Since $G_{\mathrm{sc}}$ is simply connected, $\underline{\Gamma}{}_e(BG_{\mathrm{sc}}, B^4A(1))$ is discrete and, under the functor of forgetting the $\mathbb E_1$-monoidal structure, isomorphic to $\underline{\Gamma}{}_e(G_{\mathrm{sc}}, B^3A(1))$. In particular, $\mu_{G_{\mathrm{sc}}}$ acquires an automatic $B_{\ad}$-equivariance structure. The isomorphism $\pr^*(\mu_{G_{\mathrm{sc}}}) \cong \act^*(\mu_{G_{\mathrm{sc}}})$ may be restricted along \eqref{eq-adjoint-group-action-diagram} to give an isomorphism over $T_{\ad}\times T_{\mathrm{sc}}$:
\begin{equation}
\label{eq-adjoint-group-equivariance}
\pr^*(\res_{B_{\mathrm{sc}}}(\mu_{G_{\mathrm{sc}}})) \cong \act^*(\res_{B_{\mathrm{sc}}}(\mu_{G_{\mathrm{sc}}})).
\end{equation}

On the other hand, $\act = \pr$ as maps $T_{\ad}\times T_{\mathrm{sc}} \rightarrow T_{\mathrm{sc}}$. Hence \eqref{eq-adjoint-group-equivariance} may be viewed as an automorphism of the trivial object in the groupoid $\underline{\Gamma}{}_{e, e}(T_{\ad}\times T_{\mathrm{sc}}, B^3A(1))$, i.e.~a section of $\underline{\Gamma}{}_{e, e}(T_{\ad}\times T_{\mathrm{sc}}, B^2A(1))$. Finally, a variant of Lemma \ref{lem-birigidified-morphism-classification} (with the same proof) gives an equivalence:
\begin{equation}
\label{eq-adjoint-simply-connected-birigidified}
\underline{\Gamma}{}_{e,e}(T_{\ad}\times T_{\mathrm{sc}}, B^2A(1)) \cong \check{\Lambda}_{\ad}\otimes \check{\Lambda}_{\mathrm{sc}}\otimes A(-1).
\end{equation}
\end{void}

\begin{prop}
\label{prop-adjoint-simply-connected-commutator}
The section of \eqref{eq-adjoint-simply-connected-birigidified} defined by the isomorphism \eqref{eq-adjoint-group-equivariance} agrees with the pairing \eqref{eq-adjoint-simply-connected-pairing}.
\end{prop}
\begin{proof}
The assertion is of \'etale local nature on $S$, so we may assume that $G$ splits. Furthermore, we fix a splitting of the surjection $B\rightarrow T$ and view $T$ (resp.~$T_{\mathrm{sc}}$) as a subgroup of $G$ (resp.~$G_{\mathrm{sc}}$). The functor $\res_{B_{\mathrm{sc}}}$ may be identified with the restriction $\res_{T_{\mathrm{sc}}}$ along $T_{\mathrm{sc}} \subset G_{\mathrm{sc}}$.

With this set-up, the isomorphism \eqref{eq-adjoint-group-equivariance}, evaluated at $t \in T_{\ad}$, has a concrete interpretation as the following loop in $\underline{\Maps}{}_{\mathbb E_1}(T_{\mathrm{sc}}, B^3A(1))$:
\begin{equation}
\label{eq-adjoint-simply-connected-loop}
\begin{tikzcd}[column sep = 1.5em]
	\res_{T_{\mathrm{sc}}}(\mathrm{int}_{t}^*\mu_{G_{\mathrm{sc}}}) \ar[r, "\cong"] & \res_{T_{\mathrm{sc}}}(\mu_{G_{\mathrm{sc}}}) \ar[d, "\cong"] \\
	\mathrm{int}_{t}^*\res_{T_{\mathrm{sc}}}(\mu_{G_{\mathrm{sc}}}) \ar[u, "\cong"] & \res_{T_{\mathrm{sc}}}(\mu_{G_{\mathrm{sc}}}) \ar[l, "\cong"]
\end{tikzcd}
\end{equation}
Here, the top isomorphism is induced from $\mathrm{int}_t^*(\mu_{G_{\mathrm{sc}}}) \cong \mu_{G_{\mathrm{sc}}}$ over $G_{\mathrm{sc}}$, whereas the bottom isomorphism is due to the triviality of the $T_{\ad}$-action on $T_{\mathrm{sc}}$.

To identify this loop, we may choose a simple coroot $\alpha$ of $G$, viewed as a morphism $\mathbb G_m\rightarrow T_{\mathrm{sc}}$. It extends to an inclusion $f_{\alpha} : \SL_2\subset G_{\mathrm{sc}}$ preserved by the $T_{\ad}$-action. For each $\lambda\in\Lambda_{T_{\ad}}$, viewed as a morphism $\mathbb G_m \rightarrow T_{\ad}$, we find an induced $\mathbb G_m$-action on $\SL_2$ which is the $\langle\check{\alpha}, \lambda\rangle$-multiple of the action of the adjoint torus of $\SL_2$ on $\SL_2$. Hence the pullback of \eqref{eq-adjoint-simply-connected-loop} along $\alpha$ is given by the same diagram applied to $f_{\alpha}^*(\mu_{G_{\mathrm{sc}}})$ and $t^{\langle\check{\alpha},\lambda\rangle} \in \mathbb G_m$, viewed as a section of the adjoint torus of $\SL_2$.

On the other hand, the \'etale metaplectic cover $f_{\alpha}^*(\mu_{G_{\mathrm{sc}}})$ of $\SL_2$ is classfied by $Q(\alpha)\in A(-1)$. By construction, the loop \eqref{eq-adjoint-simply-connected-loop} applied to $t\in\mathbb G_m$ is naturally the restriction along $t\times\mathbb G_m$ of a bi-rigidified morphism:
\begin{equation}
\label{eq-adjoint-simply-connected-birigidified-SL2}
\mathbb G_m \times \mathbb G_m \rightarrow B^2A(1).
\end{equation}
It remains to show that \eqref{eq-adjoint-simply-connected-birigidified-SL2} corresponds to $Q(\alpha)\in A(-1)$ under Lemma \ref{lem-birigidified-morphism-classification}. However, we know from Lemma \ref{lem-borel-equivariance-bostruction} that when applied to $t^2\in\mathbb G_m$, \eqref{eq-adjoint-simply-connected-loop} corresponds to the commutator $b(\alpha, \alpha) = 2Q(\alpha)$. Thus \eqref{eq-adjoint-simply-connected-birigidified-SL2} corresponds to a square root of $2Q(\alpha)$.

Finally, a square root of $2a$ defined functorially for all finite abelian groups $A$ equipped with an element $a\in A$ must equal $a$, e.g.~by the integrality of $2$-adic integers.
\end{proof}

\subsection{The case $S=\Spec(\mathbb R)$}
\label{sect-real-place}

\begin{void}
We study the case $S = \Spec(\mathbb R)$ as an extended example. The \'etale metaplectic covers of $G$ define topological covers of a subgroup $G(\mathbb R)^0\subset G(\mathbb R)$ by the construction of \S\ref{void-metaplectic-cover-local-construction-real}.

In this section, we give a criterion for when $G(\mathbb R)^0 = G(\mathbb R)$ in terms of the classification of \'etale metaplectic covers.
\end{void}

\begin{void}
Let us begin with $G=\mathbb G_m$. The splitting of Remark \ref{rem-splitting-theta-data} defines a $\mathbb Z$-linear morphism:
\begin{equation}
\label{eq-projection-real-to-linear-component}
	\underline{\Gamma}{}_e(B\mathbb G_m, B^4A(1)) \rightarrow B^2A.
\end{equation}

On the other hand, every pointed morphism $\mu : B\mathbb G_m\rightarrow B^4A(1)$ defines an $\mathbb E_1$-monoidal morphism $\mathbb G_m \rightarrow B^3A(1)$, hence a map $\mathbb G_m(\mathbb R) \rightarrow H^3(\mathbb R, A(1))$.

Denote its value at $(-1)\in\mathbb G_m(\mathbb R)$ by $\sgn(\mu)$.
\end{void}

\begin{lem}
\label{lem-real-place-gm}
The element $\sgn(\mu)$ equals the class of the image of $\mu$ under \eqref{eq-projection-real-to-linear-component}, followed by the cup product with the restriction of the Kummer class to $(-1)\in\mathbb G_m(\mathbb R)$:
\begin{equation}
\label{eq-real-number-cohomology-identification}
\cup (-1)^*[\Psi] : H^2(\mathbb R, A) \rightarrow H^3(\mathbb R, A(1)).
\end{equation}
\end{lem}
\begin{proof}
If $\mu$ is the image of the splitting $A(-1) \rightarrow \underline{\Gamma}{}_e(B\mathbb G_m, B^4A(1))$ of Remark \ref{rem-splitting-theta-data}, viewing elements of the source as $A(-1)$-valued quadratic forms on $\mathbb Z$, then $\sgn(\mu) = 0$. Indeed, $\sgn(\mu)$ depends only on the underlying pointed morphism of the associated $\mathbb E_1$-monoidal morphism $\mathbb G_m \rightarrow B^3A(1)$, which is trivial in this case.

It remains to show that for a pointed morphism $\mu : B\mathbb G_m \rightarrow B^4A(1)$ defined by a section $t$ of $A[2]$, the element $\sgn(\mu)$ is the image of $t$ under \eqref{eq-real-number-cohomology-identification}. In this case, we have $\mu \cong \Psi^*(t)$.

Evaluation at $(-1)\in\mathbb G_m(\mathbb R)$ corresponds to pulling back along $(-1) : \Spec(\mathbb R) \rightarrow \mathbb G_m$. Replacing the Yoneda product by the cup product, we obtain an equality of cohomology classes:
$$
\sgn(\mu) = [t] \cup (-1)^*[\Psi] \in H^3(\mathbb R, A(1)),
$$
which is what we wanted to prove.
\end{proof}

\begin{rem}
If $A$ is the constant sheaf $\{\pm 1\}$, then \eqref{eq-real-number-cohomology-identification} is the unique isomorphism between two copies of $\mathbb Z/2$. Indeed, the cohomology ring $H^*(\mathbb R, \{\pm 1\})$ is isomorphic to $\mathbb F_2[x]$ with a degree-$1$ generator $x = (-1)^*[\Psi]$.
\end{rem}

\begin{void}
Let $G$ be a reductive group scheme over $\mathbb R$ together with a maximal torus $T$. Write $T_0\subset T$ for its maximal split subtorus. Given a pointed morphism $\mu : BG \rightarrow B^4A(1)$, we obtain a homomorphism:
\begin{equation}
\label{eq-signature-reductive-group}
\Lambda_{T_0} \rightarrow H^3(\mathbb R, A(1)),\quad \lambda\mapsto \sgn(\lambda^*\mu).
\end{equation}
where $\Lambda_{T_0}$ denotes the group of cocharacters of $T_0$.

On the other hand, recall from \S\ref{void-metaplectic-cover-local-construction-real} that the $\mathbb E_1$-monoidal morphism $G\rightarrow B^3A(1)$ associated to $\mu$ defines a morphism of topological groups $G(\mathbb R)\rightarrow H^3(\mathbb R, A(1))$, whose kernel is $G(\mathbb R)^0$.
\end{void}

\begin{lem}
\label{lem-real-place-reductive}
The map \eqref{eq-signature-reductive-group} is zero if and only if $G(\mathbb R)^0 = G(\mathbb R)$.
\end{lem}
\begin{proof}
The ``only if'' direction requires a proof. Since the morphism $G(\mathbb R)\rightarrow H^3(\mathbb R, A(1))$ is continuous (\cite[Lemme 2.10]{MR1441006}), it factors through $\pi_0(G(\mathbb R))$.

According to \cite[Th\'eor\`eme 14.4]{MR207712}, the map $\pi_0(T_0(\mathbb R)) \rightarrow \pi_0(G(\mathbb R))$ is surjective. Hence, it suffices to show that the induced map of abelian groups:
$$
\pi_0(T_0(\mathbb R)) \rightarrow \tn H^3(\mathbb R, A(1))
$$
vanishes identically. Now, $\pi_0(T_0(\mathbb R))$ is a direct sum of copies of $\mathbb Z/2$, and the hypothesis shows that its restriction to each copy of $\mathbb Z/2$ vanishes.
\end{proof}

\begin{rem}
Using Lemma \ref{lem-real-place-reductive}, we see that $G(\mathbb R)^0 = G(\mathbb R)$ for all \'etale metaplectic covers $\mu$ if $T$ does not contain a split subtorus.
\end{rem}

\medskip

\section{Metaplectic dual data}
\label{sec-metaplectic-dual}

In this section, we define the metaplectic dual data of a reductive group scheme $G\rightarrow S$ equipped with an \'etale metaplectic cover $\mu$. In the split case, the definition we give in \S\ref{sect-metaplectic-dual-pair} relies on the materials of \S\ref{sec-torus} and \S\ref{sec-classification}. The construction is functorial in the pair $(G, \mu)$ and the nonsplit case follows from \'etale descent.

Metaplectic dual data are first defined in the function field context for split reductive groups by Gaitsgory--Lysenko \cite{MR3769731} and take the form of a triple $(H, \cal G_{Z_H}, \epsilon)$. Our definition mimics theirs, although it is purely group-theoretic and is valid in general. Over a local or global field, our construction is also closely related to Weissman's metaplectic $\tn L$-group \cite{MR3802418}, with which we eventually compare in \S\ref{sect-classical-L-group}.

The main construction of this section is in \S\ref{sect-dual-category}: we obtain the category of representations of the metaplectic $\tn L$-group \emph{without} the need to construct the group itself. It is sufficient for defining $\tn L$-parameters, which are certain tensor functors (Definition \ref{defn-tannakian-L-parameter}). This Tannakian perspective is a well known alternative to Langlands' definition of the $\tn L$-group, and I believe that it adds some transparency, especially in the metaplectic context.

\begin{void}
Throughout this section, we fix a scheme $S$. We shall need a ``field of coefficients'' which is typically $\mathbb C$ or $\overline{\mathbb Q}{}_{\ell}$. For concreteness, we focus on the latter case.

More precisely, let $\ell$ be a prime invertible on $S$ and fix an algebraic closure $\mathbb Q_{\ell}\subset\overline{\mathbb Q}_{\ell}$. Let $E \subset\overline{\mathbb Q}_{\ell}$ be a finite extension of $\mathbb Q_{\ell}$. The coefficient for metaplectic covers will be a subsheaf $A\subset \underline E^{\times}$ of finite abelian groups whose order is invertible on $S$.
\end{void}

\subsection{The split case}
\label{sect-metaplectic-dual-pair}

\begin{void}
\label{void-metaplectic-dual-context}
Let $G\rightarrow S$ be a split reductive group scheme and $\mu$ be a pointed morphism $B(G) \rightarrow B^{4}A(1)$. We use the notations of \S\ref{void-reductive-group-notations} for group-theoretic notions.

We shall first construct a pair $(H, F_{\mu})$ where $H$ is a pinned reductive group scheme over $\Spec(\mathbb Z)$ and $F_{\mu}$ is an $\mathbb E_{\infty}$-monoidal morphism:
\begin{equation}
\label{eq-E-infinity-morphism-characters-of-center}
F_{\mu} : \hat Z_H \rightarrow B^{2}(A),
\end{equation}
where $Z_H$ denotes the center of $H$ and $\hat Z_H$ the abelian group of its characters, viewed as a constant \'etale sheaf of abelian groups over $S$.
\end{void}

\begin{rem}
Using the inclusion $A\subset\underline E^{\times}$, we may view $F_{\mu}$ as valued in $B^2(\underline E^{\times})$, but we will still use the same notation for it.
\end{rem}

\begin{void}
Recall that to each pointed morphism $\mu : BG \rightarrow B^4A(1)$, there is an associated strict quadratic form $Q\in\Quad(\Lambda, A(-1))_{\st}$, see \S\ref{void-associated-quadratic-form}.

The construction of $H$ depends only on $Q$ (see also \cite[2.2.5]{MR1227098}). Define $\Phi^{\sharp}\subset\Lambda^{\sharp}$ to be the subset $\{\ord(Q(\alpha))\alpha\mid\alpha\in\Phi\}$, where $\ord(Q(\alpha))\in\mathbb N$ denotes the order of $Q(\alpha)$.

Let $\check{\Lambda}^{\sharp}$ be the dual of $\Lambda^{\sharp}$. Since $Q$ is strict, $\ord(Q(\alpha))^{-1}\check{\alpha}$ takes integral values on $\Lambda^{\sharp}$. Hence we have a well-defined subset $\check{\Phi}^{\sharp} = \{\ord(Q(\alpha))^{-1}\check{\alpha}\mid\check{\alpha}\in\check{\Phi}\}$ of $\check{\Lambda}^{\sharp}$. The bijection $\Phi\cong\check{\Phi}$ induces a bijection $\Phi^{\sharp}\cong\check{\Phi}^{\sharp}$. The systems of simple coroots and roots $\Delta^{\sharp}$, $\check{\Delta}^{\sharp}$ are determined by the corresponding elements of $\Delta$, $\check{\Delta}$.

One verifies that $(\Delta^{\sharp}\subset\Phi^{\sharp}\subset\Lambda^{\sharp}, \check{\Delta}^{\sharp}\subset\check{\Phi}^{\sharp}\subset\check{\Lambda}^{\sharp})$, $\Phi^{\sharp}\cong\check{\Phi}^{\sharp}$ defines a based reduced root datum. Let $G^{\sharp}$ be the corresponding pinned reductive group over $S$. Define $H$ to be the Langlands dual of $G^{\sharp}$, viewed as a pinned split reductive group over $\Spec(\mathbb Z)$. (This means that $\Lambda_T^{\sharp}$ is the \emph{character} lattice of the maximal torus of $H$, etc.)
\end{void}

\begin{void}
To construct $F_{\mu}$, we need some more notations. Regard the quadratic form $Q$ as fixed and write $\underline{\Gamma}{}_e(BG, B^4A(1))_Q$ for the full subgroupoid of pointed morphisms $BG \rightarrow B^4A(1)$ whose associated quadratic form is $Q$.

We collect from the canonical morphism \eqref{eq-canonical-morphism-to-sharp-torus} and Proposition \ref{prop-commutative-cover-classification} the following canonical morphism:
\begin{equation}
\label{eq-metaplectic-cover-sharp-morphism}
\underline{\Gamma}{}_e(BG, B^4A(1))_Q \rightarrow \underline{\Maps}{}_{\mathbb E_{\infty}}(\Lambda^{\sharp}, B^2A).
\end{equation}
Its naturality with respect to the map $G_{\mathrm{sc}}\rightarrow G$ shows that the following diagram commutes:
\begin{equation}
\label{eq-restriction-to-commutive-covers-naturality}
\begin{tikzcd}[column sep = 1em]
	\underline{\Gamma}{}_e(BG, B^4A(1))_Q \ar[r]\ar[d] & \underline{\Maps}{}_{\mathbb E_{\infty}}(\Lambda^{\sharp}, B^2A)\ar[d] \\
	\underline{\Gamma}{}_e(BG_{\mathrm{sc}}, B^4A(1))_{Q_{\mathrm{sc}}} \ar[r] & \underline{\Maps}{}_{\mathbb E_{\infty}}(\Lambda^{\sharp, r}, B^2A)
\end{tikzcd}
\end{equation}
\end{void}

\begin{void}
\label{void-trivialization-commutative-cover-simply-connected}
Let us construct a trivialization of the bottom horizontal functor in \eqref{eq-restriction-to-commutive-covers-naturality}.
\end{void}

\begin{proof}[Construction]
The question concerns only a simply connected group $G_{\mathrm{sc}}$ so we will write $G = G_{\mathrm{sc}}$ to ease the notation. We shall construct the trivialization \'etale locally on $S$ assuming that $G$ splits using a fixed pinning. Then we argue that the trivialization does not depend on the pinning, so it exists over $S$ by \'etale descent.

In the split case, $\underline{\Gamma}{}_e(BG, B^4A(1))_Q$ is the singleton with element $Q$, corresponding to an \'etale metaplectic cover $\mu$, and we shall trivialize its image in $\underline{\Maps}{}_{\mathbb E_{\infty}}(\Lambda^{\sharp, r}, B^2A)$. The lattice $\Lambda^{\sharp, r}$ has a globally defined basis with elements $\alpha^{\sharp}\in\Delta^{\sharp}$, so we have a fiber sequence (see Remark \ref{rem-commutative-cover-cofiber-sequence}):
$$
\bigoplus_{\alpha^{\sharp}\in\Delta^{\sharp}} B^2A \rightarrow \underline{\Maps}{}_{\mathbb E_{\infty}}(\Lambda^{\sharp, r}, B^2A) \rightarrow \underline{\Maps}{}_{\mathbb Z}(\Lambda^{\sharp, r}/2, A(-1)).
$$
By the definition of $\Delta^{\sharp}$, the image of $\mu$ in $\underline{\Maps}{}_{\mathbb E_{\infty}}(\Lambda^{\sharp, r}, B^2A)$ lies in the full subgroupoid $\bigoplus_{\alpha^{\sharp}\in\Delta^{\sharp}}B^2A$, so it suffices to trivialize the restriction along each $\alpha^{\sharp}\in\Delta^{\sharp}$, viewed as a map $\mathbb Z\rightarrow\Lambda^{\sharp, r}$. Denote the corresponding section of $B^2A$ by $(\alpha^{\sharp})^*(\mu)$.

Let us choose a pinning $T_1 \subset B_1 \subset G$, $N_{\alpha_1}\cong\mathbb G_a$ for $\alpha_1 : \mathbb G_m\rightarrow T_1$ the homomorphism corresponding to the coroot $\alpha\in\Phi$ and $N_{\alpha_1}\subset G$ the corresponding root subgroup. For each $\alpha\in\Delta$, there is a unique homomorphism $f_{\alpha_1} : \SL_2 \rightarrow G$ of \emph{pinned} split reductive group schemes inducing $\alpha_1$ on the maximal tori. The section $(\alpha^{\sharp})^*(\mu)$ of $B^2A$ is given by the restriction of the \'etale metaplectic cover $\ord(Q(\alpha))\cdot f_{\alpha_1}^*(\mu)$ along $\mathbb G_m\subset \SL_2$. It is trivialized as $f_{\alpha_1}^*(\mu)$ is classified by $Q(\alpha)\in A(-1)$, which is annihilated by $\ord(Q(\alpha))$.

For a distinct pinning $T_2\subset B_2\subset G$, $N_{\alpha_2}\cong\mathbb G_a$, we need to construct a commutative diagram of sections of $B^2A$:
\begin{equation}
\label{eq-simply-connected-trivialization-compatibility}
\begin{tikzcd}[column sep = 1.5em]
	\res_{\mathbb G_m}(\ord(Q(\alpha))\cdot f_{\alpha_1}^*(\mu)) \ar[r, phantom, "\cong"]\ar[d, "\cong"] & * \ar[d, "\cong"] \\
	 \res_{\mathbb G_m}(\ord(Q(\alpha))\cdot f_{\alpha_2}^*(\mu)) \ar[r, phantom, "\cong"] & *
\end{tikzcd}
\end{equation}
where the horizontal morphisms are the trivializations constructed above, the right vertical arrow is the identity, and the left vertical arrow is induced from the equality $f_{\alpha_1}^*(\mu) = f_{\alpha_2}^*(\mu)$, arising from $\mathrm{int}_g^*(\mu) = \mu$, for $g\in G_{\ad}$ the unique section carrying the first pinning to the second (so in particular, $f_{\alpha_2} = \mathrm{int}_g\circ f_{\alpha_1}$). The commutativity of \eqref{eq-simply-connected-trivialization-compatibility} is clear, as is its compatibility when a third pinning is present.
\end{proof}

\begin{void}
Using the Construction of \S\ref{void-trivialization-commutative-cover-simply-connected} and the identification $\hat Z_H \cong \Lambda^{\sharp}/\Lambda^{\sharp, r}$, we find a canonical morphism:
\begin{equation}
\label{eq-construction-E-infinity-morphism}
\underline{\Gamma}{}_e(BG, B^4A(1))_Q \rightarrow \underline{\Maps}{}_{\mathbb E_{\infty}}(\hat Z_H, B^2A).
\end{equation}

We set $F_{\mu}$ to be the image of $\mu$ under \eqref{eq-construction-E-infinity-morphism}. This concludes the construction of the pair $(H, F_{\mu})$ alluded to in \S\ref{void-metaplectic-dual-context}.
\end{void}

\subsection{Splitting $F_{\mu}$}

\begin{void}
We keep the notations of the previous \S\ref{sect-metaplectic-dual-pair}. Given a split reductive group $G$ together with an \'etale metaplectic cover $\mu$, we have constructed a pair $(H, F_{\mu})$ from the combinatorial data associated to $G$ and $\mu$.

Following \cite{MR3769731}, it is possible to re-package the pair $(H, F_{\mu})$ as a triple $(H, F_{\mu}^0, \epsilon)$ by writing $F_{\mu}$ as the sum of a $\mathbb Z$-linear morphism $\hat Z_H\rightarrow B^2A$ and an $\mathbb E_{\infty}$-morphism with trivial underlying $\mathbb E_1$-monoidal structure.

Recall that $\underline{\Maps}{}_{\mathbb E_{\infty}}(\hat Z_H, B^2A)$ fits into a fiber sequence:
\begin{equation}
\label{eq-commutative-cover-character-fiber-sequence}
\underline{\Maps}{}_{\mathbb Z}(\hat Z_H, B^2A) \rightarrow \underline{\Maps}{}_{\mathbb E_{\infty}}(\hat Z_H, B^2A) \rightarrow \underline{\Maps}{}_{\mathbb Z}(\hat Z_H/2, A),
\end{equation}
where the second map sends an $\mathbb E_{\infty}$-monoidal morphism $\hat Z_H\rightarrow B^2A$ to the map $\hat Z_H/2\rightarrow A$ defined by its commutativity constraint as in \S\ref{void-self-commutativity-functor}.
\end{void}

\begin{void}
\label{void-commutative-cover-splitting}
The fiber sequence \eqref{eq-commutative-cover-character-fiber-sequence} admits a canonical splitting.
\end{void}

\begin{proof}[Construction]
Since $A$ is a subsheaf of $E^{\times}$, its subgroup $A_{[2]}$ of $2$-torsion elements belongs to $\{\pm 1\}$. There is nothing to construct if $A_{[2]} = 0$ as the first map in \eqref{eq-commutative-cover-character-fiber-sequence} becomes an isomorphism, so let us assume $A_{[2]} = \{\pm 1\}$.

In this case, any homomorphism $\hat Z_H\rightarrow A_{[2]}$ is of the form $\lambda\mapsto (-1)^{\epsilon(\lambda)}$ for a homomorphism $\epsilon : \hat Z_H \rightarrow \mathbb Z/2$. We define an $\mathbb E_{\infty}$-monoidal morphism $\hat Z_H\rightarrow B^2A$ by the associated extension of sheaves of symmetric monoidal groupoids:
$$
B(A) \rightarrow (\hat Z_H)^{\dagger} \rightarrow \hat Z_H,
$$
where $(\hat Z_H)^{\dagger} := B(A)\times \hat Z_H$ as a sheaf of monoidal groupoids, but with commutativity constraint defined by:
$$
(-1)^{\epsilon(\lambda_1)\epsilon(\lambda_2)} : \lambda_1 + \lambda_2 \cong \lambda_2 + \lambda_1,
$$
for any elements $\lambda_1,\lambda_2\in \hat Z_H$, viewed as sections of $(\hat Z_H)^{\dagger}$.
\end{proof}

\begin{rem}
The construction of \S\ref{void-commutative-cover-splitting} remains valid when $\hat Z_H$ is replaced by any sheaf of abelian groups. Recall that for the constant abelian group $\mathbb Z$, there is another splitting constructed in Remark \ref{rem-splitting-theta-data} which may be restricted to $A(-1)_{[2]}\subset A(-1)$.

These two splittings are \emph{a priori} different, since the splitting in Remark \ref{rem-splitting-theta-data} produces $\mathbb E_{\infty}$-monoidal morphisms $\mathbb Z\rightarrow B^2A$ whose underlying $\mathbb E_1$-monoidal morphism are not trivialized. They do become isomorphic if a fourth root of unity is chosen.
\end{rem}

\begin{void}
\label{void-metaplectic-dual-data-triple}
Using the splitting of \S\ref{void-commutative-cover-splitting}, we obtain from $F_{\mu}$ a $\mathbb Z$-linear morphism $F_{\mu}^0 : \hat Z_H\rightarrow B^2A$. Writing $\hat Z_H^*$ for the $\mathbb Z$-linear dual of $\hat Z_H$. It is a complex in degrees $[0,1]$:
\begin{align*}
\hat Z_H^* &\cong [\check{\Lambda}^{\sharp} \rightarrow \check{\Lambda}^{\sharp, r}] \\
&\cong [\Lambda_{T_H} \rightarrow \Lambda_{T_{H_{\ad}}}],
\end{align*}
where $\Lambda_{T_H}$ denotes the character group of the maximal torus $T_H\subset H$ and $T_{H_{\ad}}$ that of the adjoint group $H_{\ad}$.

Hence $F_{\mu}^0$ may also be viewed as a section of $\hat Z_H^*\otimes A[2]$. Inducing along the map $A\subset\overline{\mathbb Q}{}_{\ell}^{\times}$ and using the identification $\hat Z_H^*\otimes\overline{\mathbb Q}{}_{\ell}^{\times}\cong Z_H(\overline{\mathbb Q}{}_{\ell})$, we obtain an \'etale $Z_H(\overline{\mathbb Q}{}_{\ell})$-gerbe $\cal G_{Z_H(\overline{\mathbb Q}{}_{\ell})}$.

Finally, we let $\epsilon$ be the image of $F_{\mu}$ along the second map in \eqref{eq-commutative-cover-character-fiber-sequence}: it is also given by restricting the quadratic form $Q$ to $\Lambda^{\sharp}$, which is linear and factors through $\hat Z_H$, with image in $A(-1)_{[2]}\cong A_{[2]}$ (Proposition \ref{prop-commutative-cover-classification}).
\end{void}

\begin{rem}
If $S = X$ is a smooth algebraic cuver over a field $k$, our triple $(H, \cal G_{Z_H(\overline{\mathbb Q}{}_{\ell})}, \epsilon)$ is closely related to the metaplectic dual data of \cite{MR3769731}. However, we caution the reader that our $\cal G_{Z_H(\overline{\mathbb Q}{}_{\ell})}$ differs slightly from the $Z_H(\overline{\mathbb Q}{}_{\ell})$-gerbe defined in \emph{op.cit.}.

More precisely, there is a canonical $Z_H(\overline{\mathbb Q}{}_{\ell})$-gerbe $\omega_{X/k}^{\epsilon}$ on $X$ defined by inducing the $\{\pm 1\}$-gerbe $\omega_{X/k}^{1/2}$ of square roots of the canonical sheaf along $\epsilon$, viewed as a map $\{\pm 1\} \rightarrow Z_H(\overline{\mathbb Q}{}_{\ell})$. Then the $Z_H(\overline{\mathbb Q}{}_{\ell})$-gerbe of \emph{op.cit.}~is equivalent to our $\cal G_{Z_H(\overline{\mathbb Q}{}_{\ell})}$ tensored with $\omega_{X/k}^{\epsilon}$.
\end{rem}

\subsection{The dual category $\underline{\Rep}{}_{H, F_{\mu}}$}
\label{sect-dual-category}

\begin{void}
We use the term \emph{tensor category} to refer to a symmetric monoidal $E$-linear abelian category, and \emph{tensor functor} to refer to a symmetric monoidal $E$-linear additive functor between them.

Denote by $\Vect_E^f$ the tensor category of finite-dimensional $E$-vector spaces.
\end{void}

\begin{void}
\label{void-representaitons-on-lisse-sheaves}
Consider the stack of tensor categories $\Lis_E$ over $S_{\et}$ whose value at $S_1\rightarrow S$ is the category of lisse (=locally constant constructible) sheaves of $E$-vector spaces. As usual, this category is bootstrapped from lisse $\cal O_E/\fr m^n$-modules for $n\ge 1$:
$$
\Lis_E(S_1) := (\lim_{n} \Lis_{\cal O_E/\fr m^n}(S_1))[\frac{1}{\ell}],
$$
where we invert $\ell$ on the Hom-modules.

The category $\Lis_E(S_1)$ is tensored over $\Vect_E^f$. Its ind-completion $\Ind(\Lis_E(S_1))$ is tensored over the category $\Vect_E$ of all $E$-vector spaces. In particular, for any coalgebra in $\Vect_E$, one may consider its comodules in the category $\Ind(\Lis_E(S_1))$.

Given an affine group scheme $H$ over $E$, an \emph{$H$-representation} in $\Lis_E(S_1)$ is an object $V\in\Lis_E(S_1)$ equipped with the structure of an $\cal O_H$-comodule. Denote the category they form by:
$$
\underline{\Rep}{}_H(S_1) := \Comod_{\cal O_H}(\Lis_E(S_1)).
$$
Then $\underline{\Rep}{}_H$ is itself a stack of tensor categories over $S_{\et}$.
\end{void}

\begin{void}
Let $\Gamma$ be a finitely generated abelian group and $\hat{\Gamma}$ be its Cartier dual group scheme over $\Spec(E)$. Then there is a canonical equivalence:
$$
\underline{\Rep}{}_{\hat{\Gamma}}(S_1) \cong \bigoplus_{\lambda\in\Gamma} \Lis_E(S_1),
$$
where the copy of $\Lis_E(S_1)$ corresponding to $\lambda\in\Gamma$ has $\hat{\Gamma}$-action of weight $\lambda$.

This is a classical fact if $S_1$ is the spectrum of a separably closed field (\cite[I, Proposition 4.7.3]{SGA3}). The proof is unchanged in our setting: $\cal O_{\hat{\Gamma}} \cong E[\Gamma]$ and $V\in\underline{\Rep}_{\hat{\Gamma}}(S_1)$ decomposes according to the image of the coaction map $V \rightarrow V\otimes E[\Gamma] \cong\bigoplus_{\lambda\in\Gamma}V$.
\end{void}

\begin{void}
Let $H$ be a split reductive group scheme over $\Spec(E)$ with center $Z_H$. Then we have a direct sum decomposition via restriction to the $Z_H$-action:
\begin{equation}
\label{eq-metaplectic-dual-category-decomposition-untwisted}
\underline{\Rep}{}_H(S_1) \cong \bigoplus_{\lambda\in\hat Z_H} \underline{\Rep}{}_H^{\lambda}(S_1).
\end{equation}
Indeed, this is because $H$-action fixes $Z_H$-weights. By varying $S_1$, we obtain a direct sum decomposition of the stack $\underline{\Rep}{}_H$.
\end{void}

\begin{void}
The constant sheaf of abelian groups $\underline E^{\times}$ acts multiplicatively on $\id_{\Rep_{H, S}}$ (\S\ref{void-abelian-group-acting-on-category}). In particular, given an $\mathbb E_{\infty}$-monoidal morphism $F : \hat Z_H \rightarrow B^{2}(\underline E^{\times})$ over $S$, we obtain another stack of tensor categories $\underline{\Rep}{}_{H, F}$ by the twisting construction of \S\ref{void-twisting-construction-symmetric-monoidal}.

It inherits a decomposition from \eqref{eq-metaplectic-dual-category-decomposition-untwisted}:
\begin{equation}
\label{eq-metaplectic-dual-category-decomposition}
\underline{\Rep}{}_{H, F} \cong \bigoplus_{\lambda\in \hat Z_H} \underline{\Rep}{}_{H, F(\lambda)}^{\lambda},
\end{equation}
where the summands are stacks of abelian categories $\underline{\Rep}{}_H^{\lambda}$ twisted by $F(\lambda)$.
\end{void}

\begin{void}
\label{void-metaplectic-dual-category-construction}
Let $G\rightarrow S$ be a reductive group scheme equipped with a pointed morphism $\mu : BG \rightarrow B^4A(1)$. Over an \'etale cover $S_1\rightarrow S$ splitting $G$, we obtain from \S\ref{sect-metaplectic-dual-pair} a pair $(H, F_{\mu})$ where $H \rightarrow \Spec(\mathbb Z)$ is a pinned reductive group and $F_{\mu} : \hat Z_H \rightarrow B^2(A)$ is an $\mathbb E_{\infty}$-monoidal morphism. Using the maps $\mathbb Z\rightarrow E$, $A\subset\underline E^{\times}$, we may form a stack of tensor categories $\underline{\Rep}{}_{H, F_{\mu}}$ over $S_1$.

Since $\underline{\Rep}{}_{H, F_{\mu}}$ is functorially attached to the pair $(G, \mu)$ and stacks of tensor categories are \'etale local objects, we find a stack of tensor categories over $S$, to be denoted by the same notation:
\begin{equation}
\label{eq-metaplectic-dual-category}
(G,\mu) \mapsto \underline{\Rep}{}_{H, F_{\mu}}.
\end{equation}
\end{void}

\begin{rem}
\label{rem-metaplectic-dual-category-tannakian-reconstruction}
The tensor category $\underline{\Rep}{}_{H, F_{\mu}}(S)$ satisfies the following conditions:
\begin{enumerate}
	\item every object $V\in\underline{\Rep}{}_{H, F_{\mu}}(S)$ admits a dual;
	\item the natural map $E \rightarrow \End(\mathbf 1)$, for $\mathbf 1\in\underline{\Rep}{}_{H, F_{\mu}}(S)$ the monoidal unit, is an isomorphism.
\end{enumerate}
Indeed, both statements may be verified \'etale locally on $S$. Statement (2) follows from the fact that $\mathbf 1$ belongs to the zero-weight part in \eqref{eq-metaplectic-dual-category-decomposition}, where the category is untwisted.

In other words, $\underline{\Rep}{}_{H, F_{\mu}}(S)$ is a \emph{cat\'egorie tensorielle} in the sense of \cite[\S1.2]{MR1106898}. If it admits a fiber functor, then we may identify $\underline{\Rep}{}_{H, F_{\mu}}(S)$ with the category of representations of an affine group scheme \cite[Th\'eor\`eme 1.12]{MR1106898}.
\end{rem}

\begin{rem}
\label{rem-metaplectic-dual-category-lisse-module}
Note that $\underline{\Rep}{}_{H, F_{\mu}}$ is naturally tensored over $\Lis_E$. \'Etale locally on $S$, this $\Lis_E$-action preserves the decomposition \eqref{eq-metaplectic-dual-category-decomposition}. Moreover, acting on the monoidal unit $\mathbf 1 \in \underline{\Rep}{}_{H, F_{\mu}}$ induces a \emph{symmetric} monoidal functor:
\begin{equation}
\label{eq-metaplectic-dual-category-trivial-representation}
\Lis_E \rightarrow \underline{\Rep}{}_{H, F_{\mu}}.
\end{equation}
It encodes the operation of viewing a lisse sheaf as a trivial $F_{\mu}$-twisted $H$-representation.
\end{rem}

\begin{void}
\label{void-center-gerbe-nonsplit}
For nonsplit $G$, it is possible to assign intrinsic meanings to $H$ and $F_{\mu}$ in the notation \eqref{eq-metaplectic-dual-category} as follows:
\begin{enumerate}
	\item $H$ is a locally constant sheaf (over $S$) of pinned reductive groups (over $\Spec(\mathbb Z)$);
	\item $\hat Z_H$ is a locally constant sheaf of abelian groups and:
	$$
	F_{\mu} : \hat Z_H \rightarrow B^2(A)
	$$
	is an $\mathbb E_{\infty}$-monoidal morphism.
\end{enumerate}
There is always a faithful exact tensor functor $\underline{\Rep}{}_{H, F_{\mu}} \rightarrow \underline{\Rep}{}_{Z_H, F_{\mu}}$.
\end{void}

\begin{void}
Due to the commutativity constraint of $F_{\mu}$, it is unreasonable to expect there to be fiber functors out of $\underline{\Rep}{}_{H, F_{\mu}}(S)$ in general. In order to define $\tn L$-parameters, we replace $F_{\mu}$ by its $\mathbb Z$-linear component $F_{\mu}^0$ in the formation of $\underline{\Rep}{}_{H, F_{\mu}}$.

Recall from \S\ref{void-metaplectic-dual-data-triple} that $F_{\mu}^0 : \hat Z_H \rightarrow B^2(A)$ is a $\mathbb Z$-linear morphism functorially attached to $F_{\mu}$ (hence to $(G, \mu)$). Repeating the construction in \S\ref{void-metaplectic-dual-category-construction} gives an assignment:
\begin{equation}
\label{eq-metaplectic-dual-category-linearized}
(G, \mu) \mapsto \underline{\Rep}{}_{H, F_{\mu}^0}.
\end{equation}
Thus $\underline{\Rep}{}_{H, F_{\mu}^0}$ is canonically equivalent to $\underline{\Rep}{}_{H, F_{\mu}}$ as stacks of \emph{monoidal} categories, but the commutativity constraint is modified by the values of $\epsilon$ on $Z_H$-weights.

Note that $\underline{\Rep}{}_{H, F_{\mu}^0}$ is naturally tensored over $\Lis_E$ like its sister $\underline{\Rep}{}_{H, F_{\mu}}$ (Remark \ref{rem-metaplectic-dual-category-lisse-module}).
\end{void}

\begin{void}
A functor $F : \cal C_1 \rightarrow \cal C_2$ of categories tensored over a monoidal category $\cal A$ is said to \emph{commute} with the $\cal A$-action if it is equipped with a natural isomorphism:
\begin{equation}
\label{eq-commutation-with-action}
F(a\otimes c)\cong a\otimes F(c),\quad a\in\cal A, c\in\cal C_1,
\end{equation}
compatible with the monoidal structure of $\cal A$.

If $F : \cal C_1\rightarrow \cal C_2$ is a tensor functor between tensor categories and the $\cal A$-actions on them come from tensor functors $\varphi_1 : \cal A \rightarrow \cal C_1$, $\varphi_2 : \cal A\rightarrow\cal C_2$, then the datum \eqref{eq-commutation-with-action} is equivalent to an isomorphism of tensor functors $F\circ\varphi_1\cong\varphi_2$.
\end{void}

\begin{defn}
\label{defn-tannakian-L-parameter}
A \emph{Tannakian $\tn L$-parameter} is a faithful exact tensor functor:
$$
\underline{\Rep}{}_{H, F_{\mu}^0}(S) \rightarrow \Lis_E(S)
$$
commuting with the $\Lis_E(S)$-actions.
\end{defn}

\begin{rem}
\label{rem-tannakian-L-parameter-split}
Let us illustrate the idea behind Tannakian $\tn L$-parameters using the example of a split reductive group $G$ over a field $F$ with no metaplectic cover. We fix a geometric point $\bar s\rightarrow S := \Spec(F)$.

In this case, the Langlands dual group is the pinned split reductive group $\check G \rightarrow \Spec(E)$ and a classical $\tn L$-parameter is a morphism $\pi_1(S, \bar s) \rightarrow \check G(E)$ of topological groups---the source is equipped with the profinite topology and the target the $\ell$-adic topology---defined up to $\check G(E)$-conjugation.

There is a natural functor between groupoids:
\begin{equation}
\label{eq-split-reductive-group-tannakian-L-parameter}
\Hom(\pi_1(S, \bar s), \check G(E))/\check G(E) \rightarrow \Hom^{\otimes}(\Rep_{\check G}, \Lis_E(S)),
\end{equation}
where the target consists of faithful exact tensor functors $\Rep_{\check G} \rightarrow \Lis_E(S)$, where $\Lis_E(S)$ is identified with continuous $\pi_1(S, \bar s)$-representations on a finite-dimensional $E$-vector space.

\emph{Claim}: \eqref{eq-split-reductive-group-tannakian-L-parameter} is fully faithful. Indeed, it suffices to prove that $\Hom(\pi_1(S, \bar s), \check G(E))$ maps bijectively to the set of such functors $\Rep_{\check G} \rightarrow \Lis_E(S)$ equipped with a rigidification along $\bar s$, i.e.~a commutative diagram
$$
\begin{tikzcd}[column sep = 1.5em, row sep = 1.5em]
	\Rep_{\check G} \ar[r]\ar[dr, swap, "e^*"] & \Lis_E(S) \ar[d, "\bar s^*"] \\
	& \Vect^f_E
\end{tikzcd}
$$
where $e^*$ is the forgetful functor (pullback along $e : \Spec(E) \rightarrow B\check G$).

This follows from the fact that continuous homomorphisms $\pi_1(S, \bar s) \rightarrow \check G(E)$ are identified with \emph{algebraic} homomorphisms $\pi_1^{\alg}(S, \bar s) \rightarrow \check G$, where the pro-algebraic completion $\pi_1^{\alg}(S, \bar s)$ agrees with the automorphism group scheme of $\bar s^*$, c.f.~\cite[\S1.2]{MR2530856}.

The proof also shows that the essential image of \eqref{eq-split-reductive-group-tannakian-L-parameter} consists of functors $T : \Rep_{\check G} \rightarrow \Lis_E(S)$ such that $\bar s^*\circ T$ is isomorphic to $e^*$ as tensor functors. The $E$-scheme parametrizing such isomorphisms $\bar s^*\circ T\cong e^*$ form a $\check G$-torsor over $\Spec(E)$, for which a section exists over a finite extension of $E$. In particular, taking colimit of \eqref{eq-split-reductive-group-tannakian-L-parameter} over finite extensions of $\mathbb Q_{\ell}$ defines an equivalence between:
\begin{enumerate}
	\item continuous homomorphisms $\pi_1(S, \bar s) \rightarrow \check G(\overline{\mathbb Q}_{\ell})$ factoring through a finite extension of $\mathbb Q_{\ell}$, modulo $\check G(\overline{\mathbb Q}_{\ell})$-conjugation; and
	\item faithful exact tensor functors $\Rep_{\check G, \overline{\mathbb Q}_{\ell}} \rightarrow \Lis_{\overline{\mathbb Q}_{\ell}}(S)$ defined over a finite extension of $\mathbb Q_{\ell}$. (Here, $\Rep_{\check G, \overline{\mathbb Q}_{\ell}}$ stands for the category of finite-dimensionl algebraic representations over $\overline{\mathbb Q}_{\ell}$; see \cite[\S4.6]{MR1012168} for extending scalars of a Tannakian category.)
\end{enumerate}

For $G$ nonsplit and equipped with an \'etale metaplectic cover $\mu$, the ``variance along $S$'' is encoded in the definition of $\underline{\Rep}{}_{H, F_{\mu}^0}$ together with its $\Lis_E$-module structure. We shall make a similar comparison between Definition \ref{defn-tannakian-L-parameter} and an $\tn L$-parameter of classical flavor in the next subsection.
\end{rem}

\subsection{The classical $\tn L$-group}
\label{sect-classical-L-group}

\begin{void}
\label{void-second-twist-conditions-on-base}
When $S$ is the spectrum of a field or a discrete valuation ring, it is possible to obtain an $\tn L$-group in the style of Langlands, i.e.~an extension of the \'etale fundamental group of $S$ by $E$-points of a split reductive group. The construction explained below paraphrases \cite[\S5 \& 19]{MR3802418}.

To be precise about our hypotheses, we shall consider a connected scheme $S$ satisfying the following conditions:
\begin{enumerate}
	\item every reductive group scheme $G\rightarrow S$ splits over a finite \'etale cover of $S$;
	\item any gerbe banded by a sheaf of finite abelian groups is trivial over a finite \'etale cover of $S$ (e.g.~$S$ is a $\tn K(\pi, 1)$-scheme).
\end{enumerate}

In what follows, we fix a geometric point $\bar s\rightarrow S$ and write $\pi_1(S, \bar s)$ for the (pro-finite) \'etale fundamental group of $S$ with base point $\bar s$. 

In the construction of \eqref{eq-weissman-L-group-center} below, we will need $E$ to be large enough relative to $(G, \mu)$; see \emph{loc.cit.}~for the precise meaning of this condition.
\end{void}

\begin{void}
Suppose that $G\rightarrow S$ is a reductive group scheme and $\mu : BG \rightarrow B^4A(1)$ is a pointed morphism.

The base change $G_{\bar s} := G\times_S\bar s$ being split, we find a pinned reductive group scheme $H \rightarrow \Spec(E)$. Let $T_H$ denote its maximal torus and $T_{H_{\ad}}$ the induced maximal torus of the adjoint group $H_{\ad}$.

Since $G\rightarrow S$ splits over a finite \'etale cover and the construction of $H$ is functorial, we obtain a ``continuous'' $\pi_1(S, \bar s)$-action on $H$ preserving the pinning. Here, ``continuity'' means that the action factors through a finite quotient of $\pi_1(S, \bar s)$.

In particular, the character group $\hat Z_H$ of the center of $H$ is a $\pi_1(S, \bar s)$-module. Its $\mathbb Z$-linear dual is the complex:
\begin{equation}
\label{eq-metaplectic-dual-group-dual-character-center}
\hat Z_H^* \cong [\Lambda_{T_H} \rightarrow \Lambda_{T_{H_{\ad}}}]
\end{equation}
of $\pi_1(S, \bar s)$-modules in degrees $[0, 1]$.
\end{void}

\begin{void}
According to \S\ref{void-center-gerbe-nonsplit}, we obtain an \'etale sheaf of abelian groups $\hat Z$ over $S$ together with a section $F_{\mu}^0$ of $\hat Z^*\otimes A[2]$ over $S$.

Recall the equivalence between \'etale sheaves of finite abelian groups on $S$ and finite $\pi_1(S, \bar s)$-modules, given by taking stalks $K\mapsto K_{\bar s}$ at $\bar s$.

Under this equivalence, $\hat Z$ passes to the $\pi_1(S, \bar s)$-module $\hat Z_H$. The presentation \eqref{eq-metaplectic-dual-group-dual-character-center} corresponds to a presentation of the dual $\hat Z^*$ as a complex of \'etale sheaves.
\end{void}

\begin{void}
Fix a trivialization of $F_{\mu}^0$ along $\bar s$. We shall construct a short exact sequence of topological groups:
\begin{equation}
\label{eq-weissman-L-group}
1 \rightarrow H(E) \rightarrow {}^LH_S \rightarrow \pi_1(S, \bar s) \rightarrow 1,
\end{equation}
which is induced from a finite quotient of $\pi_1(S, \bar s)$.
\end{void}

\begin{proof}[Construction]
The passage from $F_{\mu}^0$ to an extension of $\pi_1(S, \bar s)$ by $Z_H(E)$ arises from the standard correspondence between \'etale cochains and Galois cochains.

More precisely, suppose that $K$ is an \'etale sheaf of finite abelian groups on $S$. Consider the groupoid of extensions:
\begin{equation}
\label{eq-central-extension-fundamental-group}
1 \rightarrow K_{\bar s} \rightarrow E \rightarrow \pi_1(S, \bar s) \rightarrow 1
\end{equation}
which are induced from a finite quotient of $\pi_1(S, \bar s)$ and such that $E$-conjugation on $K_{\bar s}$ factors through the $\pi_1(S, \bar s)$-action.

By \'etale descent, there is a functor from such extensions to the groupoid of \'etale $K$-gerbes on $S$ rigidified along $\bar s$. It is fully faithful with essential image being those \'etale $K$-gerbes trivialized over a \emph{finite} \'etale cover of $S$. By our assumption on $S$ in \S\ref{void-second-twist-conditions-on-base}, this is in fact an equivalence of groupoids.

Since $F_{\mu}^0$ is rigidified along $\bar s$, we obtain from this equivalence an extension of $\pi_1(S, \bar s)$ by $\Lambda_{T_H}\otimes A_{\bar s}$ whose induced extension by $\Lambda_{T_{H_{\ad}}}\otimes A$ is equipped with a splitting.

Suppose $E$ is sufficiently large so that $\Lambda_{T_H}\otimes E^{\times} \rightarrow \Lambda_{T_{H_{\ad}}}\otimes E^{\times}$ is \emph{surjective}. Using the inclusion $A\subset\underline E^{\times}$, we obtain an extension of topological groups:
\begin{equation}
\label{eq-weissman-L-group-center}
1 \rightarrow Z_H(E) \rightarrow {}^L(Z_H)_S \rightarrow \pi_1(S, \bar s) \rightarrow 1,
\end{equation}
which is induced from a finite quotient of $\pi_1(S, \bar s)$ and such that the ${}^L(Z_H)_S$-conjugation action on $Z_H(E)$ factors through the natural $\pi_1(S, \bar s)$-action.

Finally, the extension \eqref{eq-weissman-L-group} is formed out of \eqref{eq-weissman-L-group-center} by inducing along the $\pi_1(S, \bar s)$-equivariant map $Z_H(E)\subset H(E)$, namely:
\begin{equation}
\label{eq-weissman-L-group-formation}
{}^LH_S := (H(E) \rtimes {}^L(Z_H)_S)/Z_H(E),
\end{equation}
where the ${}^L(Z_H)_S$-action on $H(E)$ factors through $\pi_1(S,\bar s)$ and the embedding of $Z_H(E)$ is the anti-diagonal one.
\end{proof}

\begin{rem}
The extension \eqref{eq-weissman-L-group} may be viewed as the $\tn L$-group of $(G, \mu)$. Contrary to the $\tn L$-group of reductive group schemes, it may \emph{not} be a semi-direct product.

If $\mu$ is obtained from a central extension of $G$ by $\underline K_2$ (see \S\ref{sect-relation-with-K2}), then our definition of ${}^LH_S$ agrees with Weissman's ``second twist''. When $S$ is the spectrum a local or global field, incorporating the ``first twist'' amounts to replacing \eqref{eq-weissman-L-group-center} by its Baer sum with the extension of $\pi_1(S, \bar s)$ by $Z_H(E)$ given by inducing the meta-Galois group:
$$
1 \rightarrow \{\pm 1\} \rightarrow \tilde{\pi}_1(S, \bar s) \rightarrow \pi_1(S, \bar s)\rightarrow 1
$$
along the map $\epsilon : \{\pm 1\} \rightarrow Z_H(E)$ of \S\ref{void-metaplectic-dual-data-triple}.
\end{rem}

\begin{rem}
Suppose that $\mu$ is the pullback along the map $G\rightarrow \pi_1(G)\otimes\mathbb G_m$, or equivalently its associated quadratic form vanishes, see Proposition \ref{prop-reductive-classification}. In this case, the construction of $F_{\mu}^0$ simplifies significantly and so does the construction of the $\tn L$-group.

Indeed, since $Q = 0$, we have $H$ being the usual Langlands dual group $\check G$, equipped with the natural $\pi_1(S, \bar s)$-action. The \'etale sheaf $\hat Z$ corresponding to the character group of its center $Z_{\check G}$ is naturally isomorphic to $\pi_1(G)$.

On the other hand, Proposition \ref{prop-reductive-classification} defines a $\mathbb Z$-linear morphism $\pi_1(G)\rightarrow B^2A$, hence a $\mathbb Z$-linear morphism $\hat Z \rightarrow B^2(E)$. If $E$ is sufficiently large (e.g.~$E = \overline{\mathbb Q}{}_{\ell}$), we obtain an extension of $\pi_1(S, \bar s)$ by $Z_{\check G}(E)$ as in \eqref{eq-weissman-L-group-center}, and thus the $\tn L$-group by the formation \eqref{eq-weissman-L-group-formation}.
\end{rem}

\begin{void}
The extension \eqref{eq-weissman-L-group} gives an alternative description of global sections of the stack of tensor categories $\underline{\Rep}{}_{H, F_{\mu}^0}$.

To be precise, we write $\Rep_{{}^LH_S}^{\alg}$ for the category of continuous ${}^LH_S$-representations on finite-dimensional $E$-vector spaces, such that the action is algebraic on $H(E)$, i.e.~the induced $H(E)$-action comes from an algebraic $H$-representation.

If ${}^LH_S = H(E)\times \pi_1(S, \bar s)$, then $\Rep_{{}^LH_S}^{\alg}$ is identified with the category of $H$-representations on lisse sheaves over $S$, i.e.~the category $\underline{\Rep}{}_H(S)$ introduced in \S\ref{void-representaitons-on-lisse-sheaves}.
\end{void}

\begin{prop}
\label{prop-metaplectic-dual-category-classical-description}
There is an equivalence of tensor categories:
\begin{equation}
\label{eq-dual-category-as-representations-of-L-group}
\underline{\Rep}{}_{H, F_{\mu}^0}(S) \cong \Rep_{{}^LH_S}^{\alg}.
\end{equation}
\end{prop}
\begin{proof}
Fix a finite \'etale Galois cover $S_1 \rightarrow S$ with structure group $\Gamma$ and a lift $\bar s_1$ of $\bar s$ such that the following statements hold:
\begin{enumerate}
	\item the $\pi_1(S, \bar s)$-action on $H$ factors through $\Gamma$;
	\item the restriction of $F_{\mu}^0$ to $S_1$, viewed as a section of the constant \'etale sheaf $\hat Z_H^*\otimes A[2]$, is trivial.
\end{enumerate}
We furthermore fix a trivialization of $F_{\mu}^0$ over $S_1$ extending the given trivialization over $\bar s_1$.

This trivialization induces an equivalence of tensor categories:
\begin{equation}
\label{eq-dual-category-as-representations-of-L-group-local}
\underline{\Rep}{}_{H, F_{\mu}^0}(S_1) \cong \Rep_{H(E) \times \pi_1(S_1, \bar s_1)}^{\alg},
\end{equation}
as both sides are equivalent to $H$-representations on lisse sheaves over $S_1$. We shall obtain \eqref{eq-dual-category-as-representations-of-L-group} as the equivalence of $\Gamma$-equivariant objects of \eqref{eq-dual-category-as-representations-of-L-group-local}, for naturally defined $\Gamma$-actions as endofunctors on both tensor categories.

The $\Gamma$-action on $\underline{\Rep}{}_{H, F_{\mu}^0}(S_1)$ comes from the descent data for $F_{\mu}^0$ and the $\Gamma$-action on $H$. The fact that $\underline{\Rep}{}_{H, F_{\mu}^0}(S)$ is identified with $\Gamma$-equivariant objects in $\underline{\Rep}{}_{H, F_{\mu}^0}(S_1)$ follows from the fact that $\underline{\Rep}{}_{H, F_{\mu}^0}$ is an \'etale stack.

The $\Gamma$-action on $\Rep^{\alg}_{H(E)\times\pi_1(S_1, \bar s_1)}$ comes from the short exact sequence:
$$
1 \rightarrow H(E)\times\pi_1(S_1, \bar s_1) \rightarrow {}^LH_S \rightarrow \Gamma \rightarrow 1.
$$
Indeed, ${}^LH_S$-conjugation on $\Rep_{H(E)\times\pi_1(S_1, \bar s_1)}^{\alg}$ factors through $\Gamma$ since inner automorphisms induce the identity functor on the category of representations. It follows from general principles that $\Rep_{{}^LH_S}^{\alg}$ is identified with $\Gamma$-equivariant objects in $\Rep^{\alg}_{H(E)\times\pi_1(S_1, \bar s_1)}$.

We omit the verification that \eqref{eq-dual-category-as-representations-of-L-group-local} is compatible with these two $\Gamma$-actions, which follows routinely from the construction of ${}^LH_S$.
\end{proof}

\begin{rem}
Under the tensor equivalence \eqref{eq-dual-category-as-representations-of-L-group}, restriction along $\bar s\rightarrow S$ on the left-hand-side corresponds to restriction along $H(E)\subset {}^LH_S$ on the right-hand-side. Namely, we have a commutative diagram:
$$
\begin{tikzcd}[column sep = 1.5em]
	\underline{\Rep}{}_{H, F_{\mu}^0}(S) \ar[d, "\bar s\rightarrow S"]\ar[r, "\cong"] & \Rep_{{}^LH_S}^{\alg} \ar[d, "H(E)\subset {}^LH_S"] \\
	\underline{\Rep}{}_{H, F_{\mu}^0}(\bar s) \ar[r, "\cong"] & \Rep_H
\end{tikzcd}
$$
where the bottom equivalence comes from the rigidification of $F_{\mu}^0$ along $\bar s$.

Under the equivalence \eqref{eq-dual-category-as-representations-of-L-group}, the $\Lis_E(S)$-action on $\underline{\Rep}{}_{H, F_{\mu}^0}(S)$ corresponds to tensoring with ${}^LH_S$-representations restricted along the map ${}^LH_S \rightarrow \pi_1(S, \bar s)$.
\end{rem}

\begin{void}
Write $\Hom_{/\pi_1(S,\bar s)}(\pi_1(S,\bar s), {}^LH_S)$ for the set of continuous sections of \eqref{eq-weissman-L-group}:
$$
\sigma : \pi_1(S, \bar s) \rightarrow {}^LH_S.
$$
From $\sigma$, we obtain a faithful exact tensor functor $T := \sigma^*$ rigidified along $\bar s$:
\begin{equation}
\label{eq-rigidification-tannakian-L-parameter}
\begin{tikzcd}[column sep = 1.5em]
\Rep_{{}^LH_S}^{\alg} \ar[r, "T"]\ar[dr, swap, "e^*"] & \Lis_E(S) \ar[d, "\bar s^*"] \\
& \Vect_E^f
\end{tikzcd}
\end{equation}
where $e^*$ stands for the tautological forgetful functor.

The fact that $\sigma$ is a section implies that $T$ commutes with natural $\Lis_E(S)$-actions and the isomorphism $\bar s^*\circ T\cong e^*$ exhibited in \eqref{eq-rigidification-tannakian-L-parameter} is compatible with $\Lis_E(S)$-actions (where $\Lis_E(S)$ acts on $\Vect_E^f$ via $\bar s^*$), i.e.~$T$ is \emph{$\Lis_E(S)$-linearly rigidified} along $\bar s$.

Under the equivalence of Proposition \ref{prop-metaplectic-dual-category-classical-description}, we obtain a Tannakian $\tn L$-parameter $T$ which is $\Lis_E(S)$-linearly rigidified along $\bar s$, see Definition \ref{defn-tannakian-L-parameter}.

The $H(E)$-conjugation action on $\Hom_{/\pi_1(S,\bar s)}(\pi_1(S,\bar s), {}^LH_S)$ corresponds to modifying the isomorphism $\bar s^*\circ T\cong e^*$ by a $\Lis_E(S)$-linear automorphism of $e^*$. In particular, we obtain a functor of groupoids:
\begin{equation}
\label{eq-metaplectic-L-parameter-classical-comparison}
	\Hom_{/\pi_1(S,\bar s)}(\pi_1(S,\bar s), {}^LH_S)/H(E) \rightarrow \Hom^{\otimes}_{\Lis_E(S)}(\underline{\Rep}{}_{H, F_{\mu}^0}(S), \Lis_E(S)),
\end{equation}
where the target is the groupoid of Tannakian $\tn L$-parameters.

The following fact is the ``$\Lis_E(S)$-linear version'' of Remark \ref{rem-tannakian-L-parameter-split}.
\end{void}

\begin{prop}
The functor \eqref{eq-metaplectic-L-parameter-classical-comparison} is fully faithful. It induces an equivalence upon taking colimit over finite extensions $\mathbb Q_{\ell}\subset E$ contained in $\overline{\mathbb Q}_{\ell}$.
\end{prop}
\begin{proof}
The assertions follow from the statements below:
\begin{enumerate}
	\item $\Hom_{/\pi_1(S,\bar s)}(\pi_1(S,\bar s), {}^LH_S)$ maps bijectively to $\Lis_E(S)$-linearly rigidified Tannakian $\tn L$-parameters;
	\item $\Lis_E(S)$-linear rigidifications of a Tannakian $\tn L$-parameter along $\bar s$ form an $H$-torsor over $\Spec(E)$. (In particular, it splits over a finite extension of $E$.)
\end{enumerate}

To prove these statements, it is convenient to use an algebraic version of the $\tn L$-group \eqref{eq-weissman-L-group}. Namely, there is a short exact sequence of affine group schemes over $\Spec(E)$:
\begin{equation}
\label{eq-algebraic-L-group}
1 \rightarrow H \rightarrow {}^LH_S^{\alg} \rightarrow \pi_1^{\alg}(S, \bar s) \rightarrow 1,
\end{equation}
where ${}^LH_S^{\alg}$ is the automorphism group scheme of the fiber functor:
\begin{equation}
\label{eq-metaplectic-dual-category-fiber-functor}
\Rep_{{}^LH_S}^{\alg} \rightarrow \Rep_H \xrightarrow{e^*} \Vect^f_E.
\end{equation}
In particular, $\Rep_{{}^LH_S}^{\alg}$ (resp.~$\Lis_E(S)$) is recovered as the category of finite-dimensional algebraic representations of ${}^LH_S^{\alg}$ (resp.~$\pi_1^{\alg}(S, \bar s)$); see Remark \ref{rem-metaplectic-dual-category-tannakian-reconstruction}.

The $E$-points of \eqref{eq-algebraic-L-group} receive continuous maps from \eqref{eq-weissman-L-group}, exhibiting ${}^LH_S$ as the pullback of ${}^LH_S^{\alg}(E)$ along $\pi_1(S, \bar s) \rightarrow \pi_1^{\alg}(S, \bar s)(E)$. By the universal property of $\pi_1^{\alg}(S, \bar s)$, continuous sections of ${}^LH_S\rightarrow\pi_1(S, \bar s)$ are in bijection with algebraic sections of ${}^LH_S^{\alg} \rightarrow \pi_1^{\alg}(S, \bar s)$. Under Tannakian duality, the latter are in bijecion with faithful exact tensor functors:
\begin{equation}
\label{eq-tannakian-L-parameter-classical-description}
T : \Rep_{{}^LH_S}^{\alg} \rightarrow \Lis_E(S)
\end{equation}
equipped with a $\Lis_E(S)$-linear rigidification along $\bar s$.

Given a faithful exact tensor functor \eqref{eq-tannakian-L-parameter-classical-description}, the scheme of $\Lis_E(S)$-linear rigidifications of $\bar s^*\circ T$ is a torsor under the group scheme of $\Lis_E(S)$-linear automorphisms of the fiber functor \eqref{eq-metaplectic-dual-category-fiber-functor}. The latter is equivalent to the group scheme of automorphisms of the fiber functor $e^* : \Rep_H \rightarrow \Vect_E^f$, i.e.~$H$.
\end{proof}

\medskip

\section{The De Rham context}
\label{sec-de-Rham}

The structure theory of \'etale metaplectic covers developed in sections \S\ref{sec-cup-product}--\ref{sec-classification} is closely related to ``quantum parameters'', or ``levels'' of affine Kac--Moody Lie algebras. Indeed, it is a classical observation that the level $\kappa$ of an affine Kac--Moody Lie algebra $\hat{\fr g}^{\kappa}$ over $\mathbb C$ has a natural intepretation as a class in $H^4_{\dR}(BG, \mathbb C)$. Na\"ively, one would then take $\Gamma_{\dR, e}(BG, \mathbb C[4])$ to be the space of quantum parameters.

When the base is a smooth, proper curve $X$, a good notion of quantum parameters is supposed to induce rings of twisted differential operators on the moduli stack of $G$-bundles on $X$. For this to happen, we need to replace $\Gamma_{\dR, e}(BG, \mathbb C[4])$ by those de Rham cochains which ``belong to the Hodge filtration $F^{\ge 2}$.''

In \S\ref{sect-quantum-parameters}, we turn this idea into a precise definition and show that it recovers the usual notion of quantum parameters. In \S\ref{sect-rational-covers}, we compare the space of quantum parameters with integral metaplectic covers as studied in \S\ref{sect-relation-with-K2}.

\subsection{Quantum parameters}
\label{sect-quantum-parameters}

\begin{void}
\label{void-differential-forms}
In this section, we work over a field $k$ of characteristic zero and denote by $\Sm_{/k}$ the category of smooth $k$-schemes.

For each integer $p\ge 0$ and $S\in\Sm_{/k}$, there is a complex of \'etale sheaves of $k$-vector spaces concentrated in cohomological degrees $\ge p$:
$$
\Omega^{\ge p}_S := [\Omega_S^p \xrightarrow{d} \Omega_S^{p+1} \xrightarrow{d}\cdots],
$$
where each $\Omega_S^n$ denotes the sheaf of $n$th differential forms on $S$ relative to $k$.

The association $S\mapsto \Gamma(S, \Omega_S^{\ge p})$ is an \'etale sheaf of $k$-module spectra on $\Sm_{/k}$, with functoriality defined by pulling back differential forms.

Let $S\mapsto\QCoh(S)$ denote the functor assigning the stable $\infty$-category of quasi-coherent $\cal O_S$-modules to an affine $k$-scheme $S$. It extends to the cattegory of algebraic stacks over $k$, by the operation of right Kan extension.
\end{void}

\begin{void}
Fix $S\in\Sm_{/k}$. Let $G\rightarrow S$ be a reductive group scheme. Denote by $\fr g$ the Lie algebra of $G$, viewed as a locally free sheaf of $\cal O_S$-modules equipped with a $G$-action.

The stable $\infty$-category $\QCoh(BG)$ is identified with quasi-coherent $\cal O_S$-modules equipped with a $G$-action (i.e.~a comodule structure over $\cal O_G$). Under this identification, the cotangent complex $L_{BG/S}$ corresponds to $\fr g^*[-1]$, where $\fr g^*$ denotes the $\cal O_S$-linear dual of $\fr g$ equipped with the co-adjoint $G$-action.
\end{void}

\begin{void}
Define:
$$
\underline{\Gamma}(BG, \Omega_{BG}^{\ge p}) := \lim_{[n]} \underline{\Gamma}(G^{\times n}, \Omega^{\ge p}_{G^{\times n}})
$$
as an \'etale sheaf of $k$-module spectra on $S$.

We also write $\underline{\Gamma}{}_e(BG, \Omega_{BG}^{\ge p})$ for the rigidified version, i.e.~the fiber of the morphism of complexes $e^* : \underline{\Gamma}(BG, \Omega_{BG}^{\ge p}) \rightarrow \Omega_S^{\ge p}$.

We define \emph{quantum parameters} to be sections of $\tau^{\le 0}\underline{\Gamma}{}_e(BG, \Omega_{BG}^{\ge 2}[4])$, i.e.~rigidified section of $\Omega_{BG}^{\ge 2}[4]$ over $BG$. The name is justified in Remark \ref{rem-quantum-parameters} below.
\end{void}

\begin{void}
Denote by $\Omega_S^{p, \cl}\subset\Omega_S^p$ the subsheaf of closed $p$-forms. We shall encounter the following complex in degrees $[-1, 0]$:
\begin{equation}
\label{eq-TDO-complex}
[\Omega_S^1 \xrightarrow{d} \Omega_S^{2,\cl}].
\end{equation}

As a sheaf of Picard groupoids, sections of \eqref{eq-TDO-complex} consist of $\Omega_S^1$-torsors whose induced $\Omega_S^{2,\cl}$-torsor is equipped with a trivialization. Such objects are equivalent to rings of twisted differential operators on $S$, see \cite[Lemma 2.1.6]{MR1237825}.

The following calculation is an analogue of Proposition \ref{prop-reductive-classification}.
\end{void}

\begin{prop}
\label{prop-de-rham-triangle}
There is a canonical triangle of complexes of sheaves of $k$-vector spaces:
\begin{equation}
\label{eq-de-rham-triangle}
(\fr g^*)^G \otimes [\Omega_S^1\xrightarrow{d}\Omega_S^{2,\cl}] \rightarrow \tau^{\le 0}\underline{\Gamma}{}_e(BG, \Omega_{BG}^{\ge 2}[4]) \rightarrow \Sym^2(\fr g^*)^G.
\end{equation}
\end{prop}

\begin{rem}
The invariants $(\fr g^*)^G$, $\Sym^2(\fr g^*)^G$ are \emph{a priori} derived, i.e.~given as the image of the corresponding objects under $\pi_* : \QCoh(BG) \rightarrow \QCoh(S)$. They are in fact concentrated in cohomological degree $0$.

To prove this assertion, we may perform an \'etale base change on $S$ and assume that $G$ is split. In particular, $G = G_0\times S$ for a reductive group $G_0$ over $k$. Then $\fr g^*$ (resp.~$\Sym^2(\fr g^*)$) is the pullback of the quasi-coherent sheaf $\fr g_0^*$ (resp.~$\Sym^2(\fr g_0^*)$) over $BG_0$. So it remains to show that the direct image along $BG_0 \rightarrow \Spec(k)$ is of cohomological amplitude $\le 0$ and commutes with arbitrary base change.

Identifying $\QCoh(BG_0)$ with the derived category of algebraic $G_0$-representations over $k$, the first statement follows from the fact that reductive groups over a field of characteristic zero are linearly reductive, and the second statement follows from the first by \cite[Corollary B.16]{halpern2014mapping}.

In particular, writing $\fr g_{\ab}$ for the Lie algebra of the maximal quotient torus $G_{\ab} := G/G_{\der}$, the natural map $\fr g\rightarrow \fr g_{\ab}$ defines an isomorphism $\fr g^*_{\ab}\cong (\fr g^*)^G$.
\end{rem}

\begin{void}
Using flat descent of cotangent complexes, we see that $\underline{\Gamma}(BG, \Omega_{BG}^{\ge 0})$ admits a filtration with associated graded pieces:
$$
\mathrm{Gr}^p\underline{\Gamma}(BG, \Omega_{BG}^{\ge 0}) \cong \underline{\Gamma}(BG, (\wedge^p L_{BG})[-p]),
$$
see \cite[Corollary 1.1.6]{MR4475269}. This is the ``Hodge filtration'' of $BG$, studied in detail in \emph{op.cit.}.

The proof of Proposition \ref{prop-de-rham-triangle} will follow from this filtration, combined with a calculation of the Hodge cohomology of $BG$ given below.
\end{void}

\begin{lem}
\label{lem-hodge-triangle}
There is a canonical triangle of complexes of sheaves of $k$-vector spaces:
\begin{equation}
\label{eq-hodge-triangle}
(\fr g^*)^G\otimes \Omega_S^{p-1}[1] \rightarrow \tau^{\le 0}\underline{\Gamma}{}_e(BG, (\wedge^pL_{BG})[2]) \rightarrow \Sym^2(\fr g^*)^G\otimes \Omega_S^{p-2}.
\end{equation}
\end{lem}
\begin{proof}
Denote by $\pi : BG \rightarrow S$ the projection map. The canonical triangle of cotangent complexes, combined with the identification $L_{BG/S}\cong\fr g^*[-1]$, yields a triangle in $\QCoh(BG)$:
\begin{equation}
\label{eq-cotangent-complex-triangle}
\pi^*\Omega_{S} \rightarrow L_{BG} \rightarrow \fr g^*[-1].
\end{equation}
Therefore, $\wedge^pL_{BG}$ admits a filtration with associated graded pieces $\Sym^n(\fr g^*)[-n]\otimes\pi^*\Omega_S^{p-n}$ for $0\le n\le p$. In particular, the filtrants for $n\ge 3$ do not contribute to the connective truncation $\tau^{\le 0}\underline{\Gamma}(BG, (\wedge^pL_{BG})[2])$.

Next, we compute by the projection formula:
\begin{align*}
	\pi_*(\Sym^n(\fr g^*)[-n]\otimes \pi^*\Omega_S^{p-n}) &\cong \pi_*(\Sym^n(\fr g^*))[-n] \otimes\Omega_S^{p-n} \\
	& \cong \Sym^n(\fr g^*)^G[-n]\otimes\Omega_S^{p-n}
\end{align*}
The filtrant for $n = 0$ defines the canonical map $\Omega_S^p[2] \rightarrow \tau^{\le 0}\underline{\Gamma}(BG, (\wedge^pL_{BG})[2])$, for which $e^*$ is a splitting. Hence the complex of rigidified sections fits into a triangle \eqref{eq-hodge-triangle}, as desired.
\end{proof}

\begin{proof}[Proof of Proposition \ref{prop-de-rham-triangle}]
The complex $\underline{\Gamma}{}_e(BG, \Omega^{\ge 2}_{BG}[4])$ admits a filtration whose associated grade pieces are $\underline{\Gamma}{}_e(BG, (\wedge^pL_{BG})[4-p])$ for $p\ge 2$. It follows from Lemma \ref{lem-hodge-triangle} that the connective truncations:
$$
\tau^{\le 0}\underline{\Gamma}{}_e(BG, (\wedge^p L_{BG})[4 - p]) = 0,\quad \text{for }p\ge 4.
$$

Therefore, $\tau^{\le 0}\underline{\Gamma}{}_e(BG, \Omega_{BG}^{\ge 2}[4])$ is the connective fiber of a map of complexes, corresponding to the above filtration in degrees $p=2,3$:
$$
\begin{tikzcd}[column sep = 1em]
	& \tau^{\le 0}\underline{\Gamma}{}_e(BG, \Omega_{BG}^{\ge 2}[4])\ar[d] & \\
	(\fr g^*)^G\otimes\Omega_S^1[1] \ar[r]\ar[d, "d"] & \tau^{\le 0}\underline{\Gamma}{}_e(BG, (\wedge^2L_{BG})[2]) \ar[r]\ar[d] & \Sym^2(\fr g^*)^G\otimes\cal O_S\ar[d, "d"] \\
	(\fr g^*)^G\otimes \Omega_S^2[1] \ar[r] & \tau^{\le 0}\underline{\Gamma}{}_e(BG, (\wedge^3L_{BG})[2]) \ar[r] & \Sym^2(\fr g^*)^G\otimes\Omega_S
\end{tikzcd}
$$
Here, the rows are split triangles from Lemma \ref{lem-hodge-triangle} and the outer vertical arrows are induced by the differentials on $\Omega_S^n$ ($n=0,1$), as follows from the description of the Hodge filtration on a smooth scheme. The desired split triangle thus follows.
\end{proof}

\begin{rem}
\label{rem-quantum-parameters}
If $G$ is defined over $k$, then the triangles \eqref{eq-de-rham-triangle} and \eqref{eq-hodge-triangle} are canonically split. Indeed, the hypothesis implies that \eqref{eq-cotangent-complex-triangle} is canonically split, from which we deduce the splittings of the other triangles by following their constructions.

If $S = X$ is an algebraic curve, then sections of $\tau^{\le 0}\underline{\Gamma}{}_e(BG, \Omega_{BG}^{\ge 2}[4])$ are given by a pair $(\kappa, E)$ where $\kappa$ is a $G$-invariant symmetric form on $\fr g$ and $E$ is an extension of quasi-coherent $\cal O_X$-modules:
$$
0 \rightarrow \Omega_X \rightarrow E \rightarrow \fr g_{\ab}\otimes\cal O_X \rightarrow 0.
$$
Such pairs are indeed the ``quantum parameters'' which appear in the geometric Langlands program, see \cite{zhao2017quantum}.
\end{rem}

\subsection{Comparison with integral metaplectic covers}
\label{sect-rational-covers}

\begin{void}
We continue to work over a field of characteristic zero. Recall the complex $\mathbb Z(n)$ of \'etale sheaves of $\mathbb Z$-modules from \S\ref{void-motivic-complex}. Write $\mathbb Q(n) := \mathbb Z(n)\otimes\mathbb Q$. There is a canonical $\mathbb Q$-linear map:
\begin{equation}
\label{eq-hodge-realization-map}
\mathbb Q(n)[n] \rightarrow \Omega^n_S
\end{equation}
natural in $S\in\Sm_{/k}$.

To define \eqref{eq-de-rham-realization-map}, we may assume $k = \bar k$ by \'etale descent. Viewing $\Omega^n : S\mapsto H^0(S, \Omega^n_S)$ as an \'etale sheaf of $k$-vector spaces on $\Sm_{/k}$, it has a subsheaf $\Omega_{\log}^n$ of differential forms of logarithmic singularity along $D$, for any smooth compactification $S\subset\bar S$ with a normal crossing boundary divisor $D$.

The degeneration of the Hodge-de Rham spectral sequence for logarithmic forms \cite[Corollaire 3.2.13(ii)]{MR498551} gives a natural isomorphism:
$$
H^0(S, \Omega_{\log}^n) \cong F^{\ge n}H^n_{\dR}(S, k),
$$
where $F^{\ge n}$ denotes the Hodge filtration. In particular, $\Omega_{\log}^n$ is an $\mathbb A^1$-invariant \'etale sheaf of $k$-vector spaces on $\Sm_{/k}$.

The morphism:
$$
\mathbb G_m^{\times n} \rightarrow \Omega_{\log}^n,\quad f_1,\cdots, f_n\mapsto d\log(f_1)\wedge\cdots d\log(f_n)
$$
induces a morphism $\mathbb Z(n)[n] \rightarrow \Omega_{\log}^n$ of complexes using the $\mathbb A^1$-invariance of the target, as in \S\ref{void-motivic-complex-to-K-theory}. Embedding the target in $\Omega^n$ and using its $\mathbb Q$-linear structure, we obtain the morphism \eqref{eq-de-rham-realization-map}.

By construction, the composition of \eqref{eq-de-rham-realization-map} with the $d : \Omega^n_S \rightarrow \Omega^{n+1}_S$ vanishes. Since $\mathbb Q(n)$ is concentrated in cohomological degrees $\le n$, it lifts to a morphism:
\begin{equation}
\label{eq-de-rham-realization-map}
	\mathbb Q(n) \rightarrow \Omega_S^{\ge n}.
\end{equation}
\end{void}

\begin{void}
The morphism \eqref{eq-de-rham-realization-map} allows us to relate integral metaplectic covers to de Rham ones. Namely, it induces a functor:
\begin{align}
\notag
\tau^{\le 0}\underline{\Gamma}{}_e(BG, \mathbb Z(2)[4])\otimes\mathbb Q &\cong \tau^{\le 0}\underline{\Gamma}{}_e(BG, \mathbb Q(2)[4]) \\
 \label{eq-de-rham-realization-map-metaplectic-cover}
&\rightarrow\tau^{\le 0}\underline{\Gamma}{}_e(BG, \Omega_{BG}^{\ge 2}[4]).
\end{align}
\end{void}

\begin{void}
Let us suppose that $G$ contains a maximal torus $T$ with sheaf of cocharacters $\Lambda$. Theorem \ref{thm-motivic-versus-K2}, together with \cite[Theorem 7.2]{MR1896177}, shows that $\tau^{\le 0}\underline{\Gamma}{}_e(BG, \mathbb Z(2)[4])$ fits into a canonical triangle:
\begin{equation}
\label{eq-motivic-complex-fiber-sequence}
\underline{\Hom}(\pi_1(G), \mathbb Z(1)[2]) \rightarrow \tau^{\le 0}\underline{\Gamma}{}_e(BG, \mathbb Z(2)[4])\rightarrow \Sym^2(\check{\Lambda})^W,
\end{equation}
where $W$ denotes the Weyl group $N_G(T)/T$.

The morphism \eqref{eq-de-rham-realization-map-metaplectic-cover} then induces a morphism of triangles:
\begin{equation}
\label{eq-rational-to-quantum-fiber-sequences}
\begin{tikzcd}[column sep = 1em]
	\underline{\Hom}(\pi_1(G), \mathbb Q(1)[2]) \ar[r]\ar[d, "(1)"] & \tau^{\le 0}\underline{\Gamma}{}_e(BG, \mathbb Q(2)[4]) \ar[r]\ar[d] & \Sym^2(\check{\Lambda})^W\otimes\mathbb Q \ar[d, "(2)"] \\
	(\fr g^*)^G\otimes [\Omega_S^1\xrightarrow{d}\Omega_S^{2,\cl}] \ar[r] & \tau^{\le 0}\underline{\Gamma}{}_e(BG, \Omega_{BG}^{\ge 2}[4]) \ar[r] & \Sym^2(\fr g^*)^G
\end{tikzcd}
\end{equation}
where the bottom triangle comes from Proposition \ref{prop-de-rham-triangle}.
\end{void}

\begin{rem}
The two outer morphisms of \eqref{eq-rational-to-quantum-fiber-sequences} have the following descriptions:
\begin{enumerate}
	\item Writing $\pi_1(G) = \Lambda/\Lambda_{\mathrm{sc}}$ for $\Lambda_{\mathrm{sc}}$ the sheaf of cocharacters of the induced maximal torus of $G_{\mathrm{sc}}$, we have:
	\begin{align*}
	\underline{\Hom}(\pi_1(G), \mathbb Q(1)[2]) &\cong [\check{\Lambda}\otimes\mathbb Q\rightarrow\check{\Lambda}_{\mathrm{sc}}\otimes\mathbb Q]\otimes\mathbb G_m[1]\\
	&\cong[\check{\Lambda}_{\ab}\otimes\mathbb Q]\otimes\mathbb G_m[1],
	\end{align*}
	where $\Lambda_{\ab}$ denotes the sheaf of cocharacters of $G_{\ab}$.
	
	This complex admits a natural map to $(\fr g^*)^G\otimes [\Omega_S^1\xrightarrow{d}\Omega_S^{2,\cl}]$, induced from the maps $\check{\Lambda}_{\ab}\otimes\mathbb Q \rightarrow \fr g^*_{\ab} \cong (\fr g^*)^G$ and $d\log : \mathbb G_m[1] \rightarrow [\Omega_S^1\xrightarrow{d}\Omega_S^{2, \cl}]$.
	
	\item Writing $\fr t$ for the Lie algebra of $T$, we have:
	$$
	\Sym^2(\check{\Lambda})^W\otimes\mathbb Q \rightarrow \Sym^2(\fr t^*)^W \cong \Sym^2(\fr g^*)^G,
	$$
	where the second isomorphism comes from Chevalley's theorem.
\end{enumerate}
\end{rem}

\begin{rem}
The following diagram summarizes the relationship among cochains on $BG$ in motivic, \'etale, and de Rham cohomological contexts:
$$
\begin{tikzcd}[column sep = 0.5em]
	\tau^{\le 0}\underline{\Gamma}{}_e(BG, \mathbb Z(2)[4])\ar[d] \\
	\tau^{\le 0}\underline{\Gamma}{}_e(BG, \mathbb Q(2)[4]) \ar[r]\ar[d] & \tau^{\le 0}\underline{\Gamma}{}_e(BG, \Omega^{\ge 2}_{BG}[4]) \\
	\tau^{\le 0}\underline{\Gamma}{}_e(BG, \mathbb Q/\mathbb Z(2)[4]) \ar[r, phantom, "\cong"] & \colim_N \underline{\Gamma}{}_e(BG, B^4\mu_N^{\otimes 2})
\end{tikzcd}
$$
Here, the bottom isomorphism appeals to \cite[Theorem 10.3]{MR2242284}.

If the ground field $k$ is algebraically closed and coincides with the coefficient field, there is a canonical character $\mathbb Q/\mathbb Z(1) \rightarrow k^{\times}$ defined by the inclusion $\mu(k)\subset k^{\times}$. Hence, any object of the bottom groupoid defines a metaplectic $\tn L$-group as in \S\ref{sec-metaplectic-dual}. Furthermore, any object of $\tau^{\le 0}\underline{\Gamma}{}_e(BG, \mathbb Q(2)[4])$ defines a pair of quantum parameters for $G$ and its Langlands dual group $\check G$, by \cite{zhao2017quantum}. In this context, one may formulate compatibility statements between the quantum and the metaplectic Langlands programs. They are, however, beyond the scope of the present article.
\end{rem}

\medskip

\appendix

\section{Twisting construction}
\label{sec-twisting-construction}

Given a sheaf of groups $A$, an $A$-torsor $P$, and a sheaf of sets $X$ equipped with an $A$-action, we obtain another sheaf of sets $X_P := P\times^AX$: the twist of $X$ by $P$. When $A$ is abelian, two $A$-torsors $P_1$, $P_2$ define a third one $P_1\otimes P_2$. When $X$ has a monoid structure, we obtain a multiplication map:
\begin{equation}
\label{eq-multiplication-torsor}
X_{P_1} \times X_{P_2} \rightarrow X_{P_1\otimes P_2},
\end{equation}
if the equality $(a_1a_2)\cdot (x_1x_2) = (a_1\cdot x_1)(a_2\cdot x_2)$ holds for all $a_1,a_2\in A$ and $x_1,x_2\in X$.

Let us involve another piece of structure: $X$ is now a sheaf of $E$-algebras equipped with a grading $X = \bigoplus_{\lambda\in\Gamma} X_{\lambda}$ by some abelian group $\Gamma$, such that $1 \in X_0$ and $x_1x_2$ has grading $\lambda_1 + \lambda_2$ if $x_1$, $x_2$ have gradings $\lambda_1$, $\lambda_2$. Then any multiplicative $A$-torsor $P$ on $\Gamma$ defines a new sheaf of $E$-algebras $X_P := \bigoplus_{\lambda\in\Gamma} (X_{\lambda})_{P(\lambda)}$ with multiplicative rule \eqref{eq-multiplication-torsor}. Here, $P(\lambda)$ denotes the $A$-torsor $P\times_{\Gamma}\{\lambda\}$.

The goal of this section is to explain an analogous construction where $X$ is replaced by a $\Gamma$-graded stack of tensor categories $\cal C$. We explain the meaning of an $A$-action on $\id_{\cal C}$ and how to form a twisted stack of tensor categories $\cal C_F$ for an $\mathbb E_{\infty}$-functor $F : \Gamma \rightarrow B^{(2)}(A)$.

\begin{rem}
There are analogous constructions when $\cal C$ is monoidal (resp.~braided monoidal) and $F : \Gamma \rightarrow B^{(2)}(A)$ is $\mathbb E_1$-monoidal (resp.~$\mathbb E_2$-monoidal). The result $\cal C_F$ is then a sheaf of monoidal (resp.~braided monoidal) categories.
\end{rem}

\subsection{Actions}

\begin{void}
\label{void-abelian-group-acting-on-category}
Suppose that $A$ is an abelian group. Let $\cal C$ be a category. We say that $A$ \emph{acts on} $\id_{\cal C}$ if there is a group homomorphism $A \rightarrow \Aut(\id_{\cal C})$. Here, $\id_{\cal C}$ denotes the identity functor viewed as an object of the category of endofunctors of $\cal C$. Concretely, an $A$-action on $\id_{\cal C}$ means that to each $a\in A$ and $c\in\cal C$, there is an isomorphism $a_c : c\cong c$. They satisfy:
\begin{enumerate}
	\item $1_c$ is the identity for all $c\in\cal C$;
	\item $(a_1a_2)_c = (a_1)_c\circ (a_2)_c$ for all $a_1,a_2\in A$ and $c\in\cal C$;
	\item $a_{c_2}\circ f = f\circ a_{c_1}$ for all $a\in A$ and $f : c_1\rightarrow c_2$ in $\cal C$.
\end{enumerate}
Suppose that $\cal C$ is a symmetric monoidal category. We say that an $A$-action on $\id_{\cal C}$ is \emph{multiplicative} if the following condition is satisfied:
\begin{enumerate}
	\item[(4)] $(a_1)_{c_1}\otimes (a_2)_{c_2} = (a_1a_2)_{c_1\otimes c_2}$ for all $a_1,a_2\in A$ and $c_1,c_2\in\cal C$.
\end{enumerate}
\end{void}

\begin{void}
Denote by $B(A)$ the groupoid with a single object $*$ and $\Aut(*) = A$ (i.e.~the Bar construction in $\Spc$). It has a symmetric monoidal structure defined by the group structure of $A$. The notion of an $A$-action on $\id_{\cal C}$ is really a description of a $B(A)$-action on $\cal C$, in the following sense: it defines a groupoid object $[n]\mapsto \cal C^{[n]}$ in the $2$-category of categories covering the groupoid object $[n]\mapsto B(A)^{\times [n]} := B(A)^{\times n}$, together with an isomorphism $\cal C^{[0]}\cong \cal C$, such that the following diagram is Cartesian for both $i = 0$, $1$:
$$
\begin{tikzcd}
	\cal C^{[1]} \ar[r, "\delta^i"]\ar[d] & \cal C^{[0]} \ar[d] \\
	B(A) \ar[r] & *
\end{tikzcd}
$$
The groupoid object $[n]\mapsto\cal C^{[n]}$ is explicitly constructed by $[n]\mapsto B(A)^{\times n}\times\cal C$. One of the boundary maps, say $\delta^0$, passes to projection onto $\cal C$. The other one $\delta^1$ is given by:
\begin{equation}
\label{eq-action-map-on-category}
	\act : B(A)\times\cal C \rightarrow \cal C,\quad (a, f)\mapsto af := a_{c_2}\circ f = f\circ a_{c_1}.
\end{equation}
(The formula in \eqref{eq-action-map-on-category} describes what $\act$ does to morphisms $a\in A$ and $f : c_1\rightarrow c_2$.) The higher boundary maps are compositions of actions and projections. The degeneracy maps are insertions along $*\in B(A)$. Conditions (1) and (2) of \S\ref{void-abelian-group-acting-on-category} ensure that the simplicial object is well defined. Taking geometric realization of the morphism $\cal C^{[n]} \rightarrow B(A)^{[n]}$, we obtain a functor of $2$-categories (see \S\ref{void-iterated-bar-construction-of-abelian-groups}):
\begin{equation}
\label{eq-twisting-construction-base}
\cal C^{[-1]} \rightarrow B^{(2)}(A).
\end{equation}
\end{void}

\begin{void}
If $\cal C$ is a symmetric monoidal category and the $A$-action on $\cal C$ is multiplicative, then \eqref{eq-action-map-on-category} is itself a functor of symmetric monoidal categories. The isomorphism between $\act(*, c_1)\otimes\act(*, c_2)$ and $\act(*, c_1\otimes c_2)$ is the obvious one. However, it demands a commutative diagram:
$$
\begin{tikzcd}
	c_1 \otimes d_1 \ar[r, "\cong"]\ar[d, "(a_1f)\otimes (a_2g)"] & c_1\otimes d_1 \ar[d, "(a_1a_2)\otimes (fg)"] \\
	c_2 \otimes d_2 \ar[r, "\cong"] & c_2\otimes d_2
\end{tikzcd}
$$
This follows from conditions (3) and (4) of \S\ref{void-abelian-group-acting-on-category}. It follows that $[n]\rightarrow\cal C^{[n]}$ is a simplicial object in the $2$-category of symmetric monoidal categories. Furthermore, the morphism $\cal C^{[n]}\rightarrow B(A)^{[n]}$ is a morphism of such. It follows that \eqref{eq-twisting-construction-base} lifts to a functor of $\mathbb E_{\infty}$-monoidal $2$-categories. (We have used the fact that forgetting the $\mathbb E_{\infty}$-structure commutes with sifted colimits, see the proof of Lemma \ref{lem-functors-structured-spaces-properties}.)
\end{void}

\begin{void}
\label{void-sheaf-theoretic-twisting-construction-base}
The above constructions carry sheaf-theoretic meaning. Fix a site $\cal X$ and let $A$ be a sheaf of abelian groups, $\cal C$ be a stack of categories. Then an $A$-action on $\id_{\cal C}$ defines a morphism of simplicial stacks of categories $\cal C^{[n]} \rightarrow B(A)^{[n]}$. Here, $B(A)$ denotes the Bar construction of $A$ in the $\infty$-category of $\Spc$-valued sheaves. By taking the geometric realization, we obtain a morphism of sheaves of $2$-categories:
\begin{equation}
\label{eq-multiplicative-action}
	\cal C^{[-1]} \rightarrow B^{(2)}(A).
\end{equation}
If $\cal C$ carries a symmetric monoidal structure and the $A$-action is multiplicative, then \eqref{eq-multiplicative-action} lifts to a morphism of sheaves of $\mathbb E_{\infty}$-monoidal $2$-categories.
\end{void}

\subsection{How $(\cal C, F)$ defines $\cal C_F$}

\begin{void}
\label{void-twisting-construction-plain}
We continue to fix a site $\cal X$ and let $A$ be a sheaf of abelian groups, $\cal C$ be a stack of categories. Suppose that $A$ acts on $\id_{\cal C}$. Given any section $F$ of $B^{(2)}(A)$ over $x$, the fiber product of \eqref{eq-multiplicative-action} with $F : x\rightarrow B^{(2)}(A)$ defines a stack of categories over $x$. We denote it by $\cal C_F$ and view it as the ``$F$-twist of $\cal C$.''

Note that for any $x_1\rightarrow x$ such that the pullback $F_{x_1}$ of $F$ is trivialized, i.e., factors as $x_1\rightarrow * \rightarrow B^{(2)}(A)$, the pullback of $\cal C_F$ to $x_1$ is isomorphic to the pullback of $\cal C$.
\end{void}

\begin{void}
If $E$ is a ring and $\cal C$ is an $E$-linear category, the same structure is inherited by $\cal C_F$. Indeed, we may let $\cal S\subset\Hom(-, x)$ be the covering sieve consisting of morphisms $x_1\rightarrow x$ such that $F_{x_1}$ is trivial. Then we have:
$$
\cal C_F(x) \cong \lim_{(x_1\rightarrow x)\in\cal S} \cal C_F(x_1).
$$
On the right-hand-side, $\cal C_F(x_1)$ has an $E$-linear structure by choosing \emph{any} trivialization of $F_{x_1}$ and transport the $E$-linear structure from $\cal C(x_1)$. Two distinct trivializations of $F_{x_1}$ differ by a section of $B(A)$, which locally defines an automorphism of $\cal C(x_1)$ by the action map \eqref{eq-action-map-on-category}. Since this action map is $E$-linear on the Hom-sets, the category $\cal C_F(x_1)$ acquires an $E$-linear structure independently of the trivialization of $F_{x_1}$. The same structure then passes to $\cal C_F(x)$.
\end{void}

\begin{void}
\label{void-graded-tensor-category}
Let us now suppose that $\cal C$ is a stack of symmetric monoidal $E$-linear additive categories, together with a decomposition $\cal C = \bigoplus_{\lambda\in\Gamma}\cal C_{\lambda}$ for an abelian group $\Gamma$ such that:
\begin{enumerate}
	\item $\mathbf 1_{\cal C} \in \cal C_0$;
	\item $c_1 \otimes c_2 \in \cal C_{\lambda_1 + \lambda_2}$ if $c_1\in \cal C_{\lambda_1}$ and $c_2\in\cal C_{\lambda_2}$.
\end{enumerate}
These data may be packaged differently: let $\cal C^{\sqcup}$ denote the stack of categories over $\Gamma$ whose fiber at $\lambda\in\Gamma$ is $\cal C_{\lambda}$. Then $\cal C^{\sqcup}$ admits a symmetric monoidal structure such that $\cal C^{\sqcup} \rightarrow\Gamma$ is a symmetric monoidal functor. (Here, $\Gamma$ is viewed as a discrete category whose symmetric monoidal structure comes from the group operations.)

Conversely, given a symmetric monoidal functor $\cal D \rightarrow \Gamma$ where $\cal D$ is a stack of symmetric monoidal $E$-linear categories whose \emph{fibers} over $\Gamma$ are additive, we obtain a stack of symmetric monoidal $E$-linear additive categories $\cal D^{\oplus} := \bigoplus_{\lambda\in\Gamma}\cal D_{\lambda}$ by taking direct sum of the fibers.
\end{void}

\begin{void}
\label{void-twisting-construction-symmetric-monoidal}
Let $\cal C$ be as in \S\ref{void-graded-tensor-category}. Suppose that $A$ acts multiplicatively on $\id_{\cal C}$. Then it induces a multiplicative action on $\id_{\cal C^{\sqcup}}$. It also acts trivially on $\id_{\Gamma}$ and the functor $\cal C^{\sqcup}\rightarrow\Gamma$ is tautologically compatible with the actions. The construction of \eqref{eq-multiplicative-action} is functorial in $\cal C$. In particular, the symmetric monoidal functor $\cal C^{\sqcup}\rightarrow\Gamma$ yields an $\mathbb E_{\infty}$-monoidal functor:
\begin{equation}
\label{eq-monoidal-functor-parametrized-by-abelian-group}
\cal C^{\sqcup, [-1]} \rightarrow \Gamma \times B^{(2)}(A).
\end{equation}
Suppose that $F : \Gamma \rightarrow B^{(2)}(A)$ is an $\mathbb E_{\infty}$-monoidal morphism. Taking fiber product of \eqref{eq-monoidal-functor-parametrized-by-abelian-group} with $(\id_{\Gamma}, F)$ yields an $\mathbb E_{\infty}$-monoidal functor $\cal C^{\sqcup}_F \rightarrow \Gamma$.

Finally, we apply the construction of \S\ref{void-graded-tensor-category} to obtain $\cal C_F := (\cal C_F^{\sqcup})^{\oplus}$, which is a stack of symmetric monoidal $E$-linear additive categories equipped with a compatible $\Gamma$-grading. This is the ``$F$-twist'' of $\cal C$.
\end{void}

\begin{rem}
Let us give an informal account of $\cal C_F$. It admits a $\Gamma$-grading:
$$
\cal C_F \cong \bigoplus_{\lambda\in\Gamma} (\cal C_{\lambda})_{F(\lambda)},
$$
where $(\cal C_{\lambda})_{F(\lambda)}$ is the $F(\lambda)$-twist of $\cal C_{\lambda}$ in the sense of \S\ref{void-twisting-construction-plain}. The monoidal operation on $\cal C_F$ is given as follows: for $\lambda_1,\lambda_2\in\Gamma$, we have $(\cal C_{\lambda_1})_{F(\lambda_1)}\times (\cal C_{\lambda_2})_{F(\lambda_2)} \rightarrow (\cal C_{\lambda_1 + \lambda_2})_{F(\lambda_1 + \lambda_2)}$ coming from the monoidal operation on $\cal C$ and the monoidal structure on $F$. The symmetric monoidal structure is likewise induced from those of $\cal C$ and $F$.
\end{rem}

\bibliographystyle{amsalpha}
\bibliography{../biblio_mathscinet.bib}

\end{document}